\documentclass{amsart}
\usepackage{a4wide}
\usepackage{hyperref}
\usepackage{graphicx} 
\usepackage{amsmath,amssymb,amsfonts,amsthm}
\usepackage{mathtools}
\usepackage{tikz}
\usetikzlibrary{arrows.meta}
\usepackage{xcolor}

\definecolor{white}{rgb}{1,1,1}
\definecolor{Black}{rgb}{0,0,0}

\newtheorem{thm}{Theorem}[section]
\newtheorem{defn}[thm]{Definition}
\newtheorem{rem}[thm]{Remark}
\newtheorem{ex}[thm]{Example}
\newtheorem{lem}[thm]{Lemma}
\newtheorem{prop}[thm]{Proposition}

\newtheorem{cor}[thm]{Corollary}

\newtheorem{prob}[thm]{Problem}

\newcommand{\Ino}{\textup{I}}
\newcommand{\IIno}{\textup{I\!I}}
\newcommand{\IIIno}{\textup{I\!I\!I}}
\newcommand{\IVno}{\textup{I\!V}}
\newcommand{\Vno}{\textup{V}}
\newcommand{\prs}{\operatorname{pr}_s}
\newcommand{\RI}{\mathcal{R}\I}
\newcommand{\RII}{\mathcal{R}\II}
\newcommand{\RIII}{\mathcal{R}\III}
\newcommand{\RIV}{\mathcal{R}\IV}
\newcommand{\RIVO}{\mathcal{R}\IVO}
\newcommand{\RIVU}{\mathcal{R}\IVU}
\newcommand{\RV}{\mathcal{R}\V}

\newcommand{\MGSordi}{\mathrm{MGS}_{\mathrm{ord}}}
\newcommand{\GSordi}{\mathrm{GS}_{\mathrm{ord}}}
\newcommand{\MGSsingu}{\mathrm{MGS}_{\mathrm{sing}}}
\newcommand{\GSsingu}{\mathrm{GS}_{\mathrm{sing}}}
\newcommand{\am}{\includegraphics[width=0.125\linewidth]{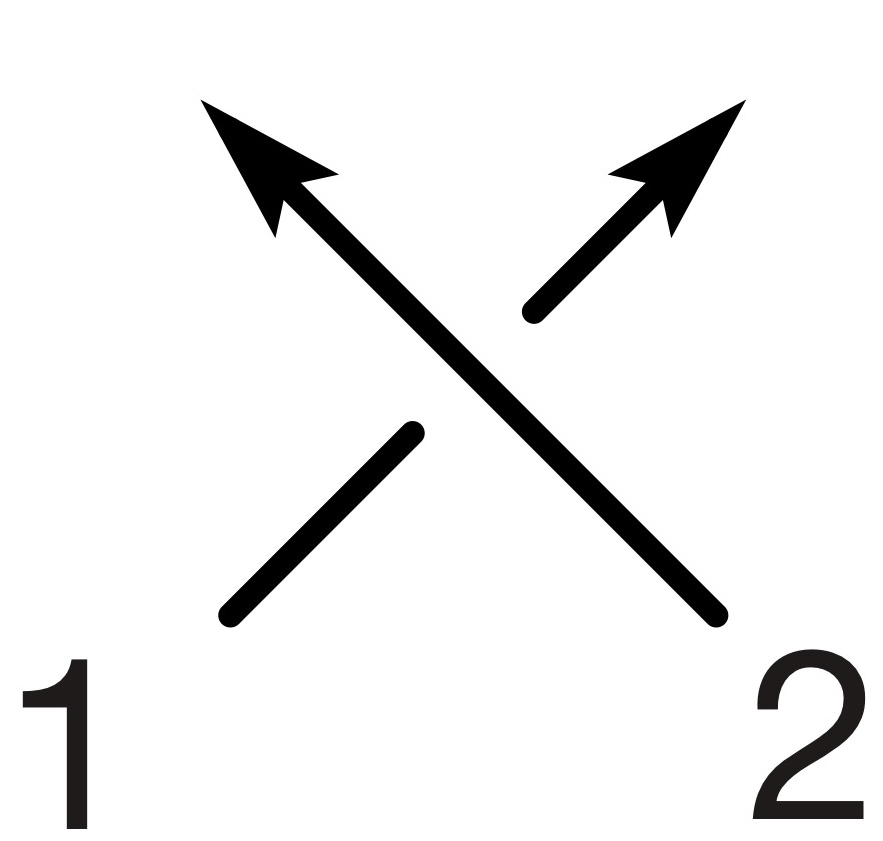}}
\newcommand{\ap}{\includegraphics[width=0.125\linewidth]{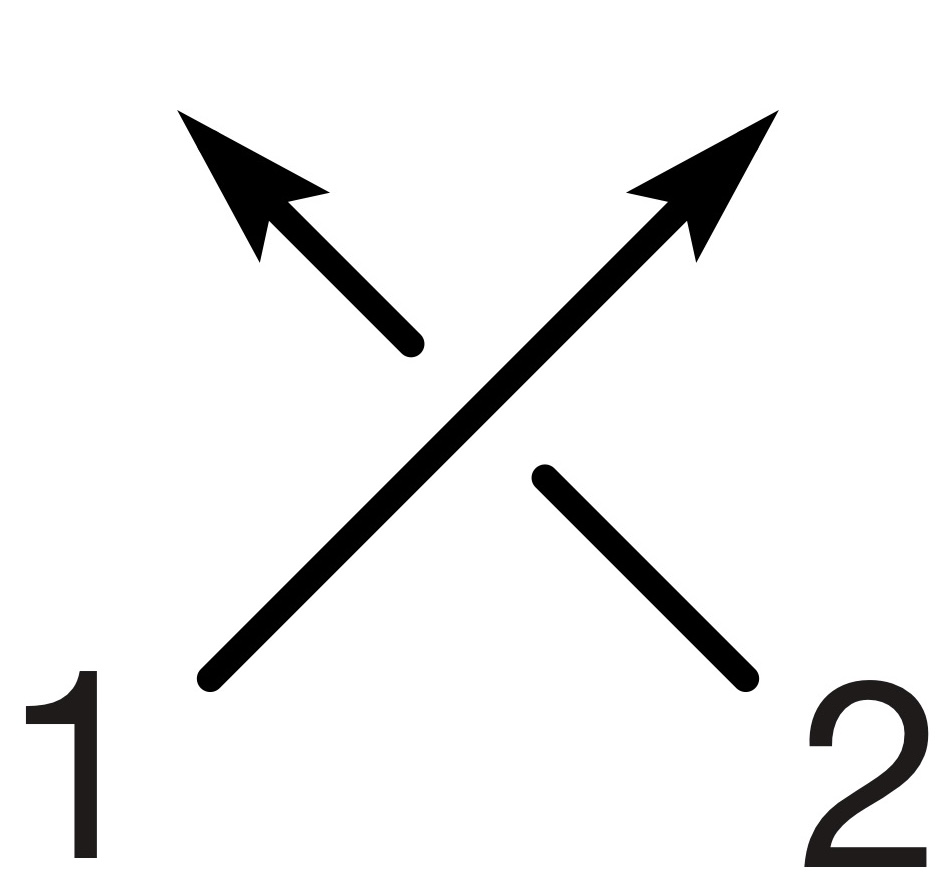}}
\newcommand{\bm}{\includegraphics[width=0.125\linewidth]{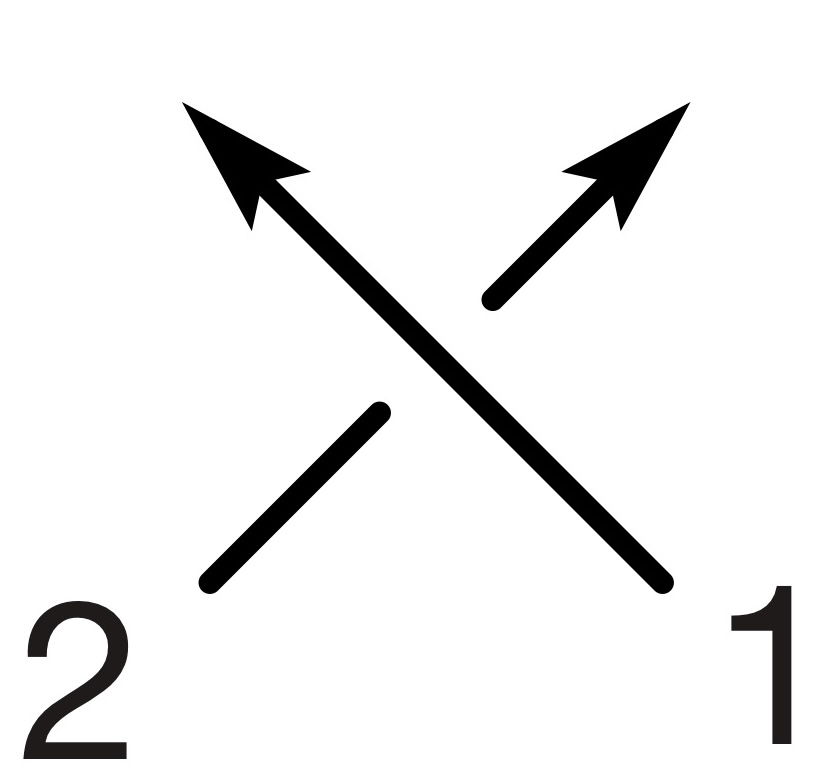}}
\newcommand{\bp}{\includegraphics[width=0.125\linewidth]{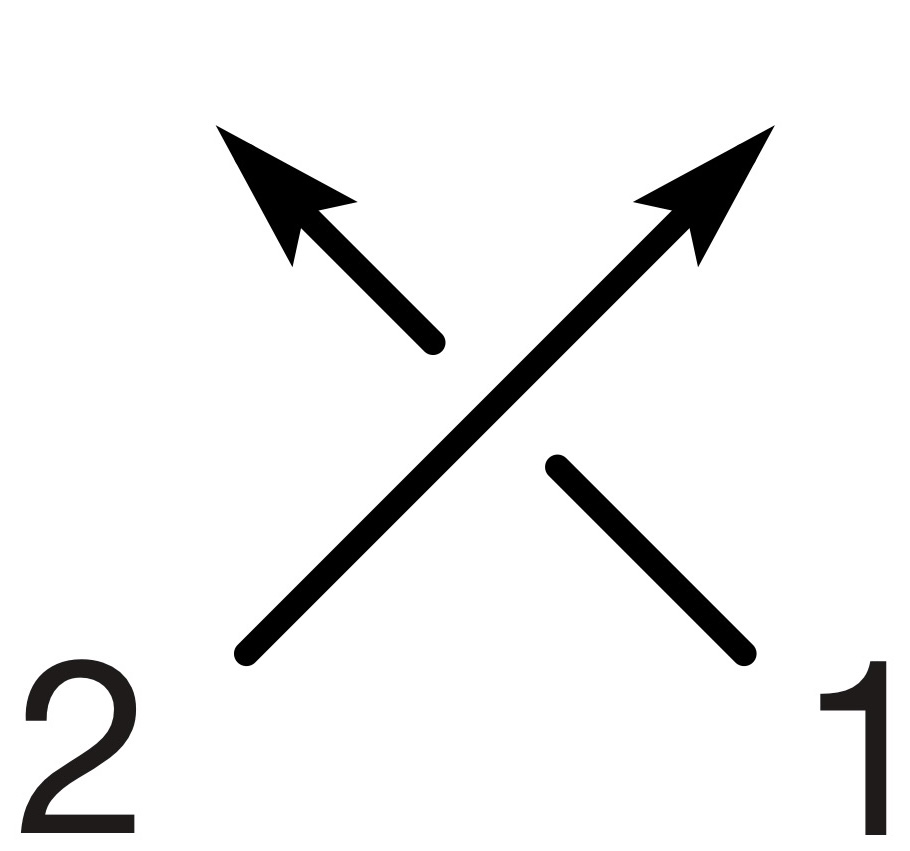}}
\newcommand{\BDPmp}{\rotatebox{180}{\scalebox{0.2}{\lower20ex\hbox{{\input{Reidemeister-move/ADP(-+).tex}}}}}}
\newcommand{\BDPpm}{\rotatebox{180}{\scalebox{0.2}{\lower20ex\hbox{{\input{Reidemeister-move/ADP(+-).tex}}}}}}
\newcommand{\BDCpm}{\rotatebox{180}{\scalebox{0.2}{\lower20ex\hbox{{\input{Reidemeister-move/ADC(+-).tex}}}}}}
\newcommand{\BDCmp}{\rotatebox{180}{\scalebox{0.2}{\lower20ex\hbox{{\input{Reidemeister-move/ADC(-+).tex}}}}}}

\newcommand{\R}{\mathcal{R}}
\newcommand{\SR}{\mathcal{SR}} 
\newcommand{\I}{\textup{I}~}
\newcommand{\II}{\textup{I\!I}~}
\newcommand{\III}{\textup{I\!I\!I}~}
\newcommand{\IV}{\textup{I\!V}~}
\newcommand{\IVO}{\textup{I\!V}_{\!O}}
\newcommand{\IVU}{\textup{I\!V}_{\!U}}
\newcommand{\V}{\textup{V}~}
\newcommand{\tI}{\textup{type~I}}
\newcommand{\tII}{\textup{type~I\!I}}
\newcommand{\tIII}{\textup{type~I\!I\!I}}
\newcommand{\tIV}{\textup{type~I\!V}}
\newcommand{\tV}{\textup{type~V}}
\newcommand{\oneA}{\textup{1a}}
\newcommand{\oneB}{\textup{1b}}
\newcommand{\oneC}{\textup{1c}}
\newcommand{\oneD}{\textup{1d}}

\newcommand{\twoA}{\textup{2a}}
\newcommand{\twoB}{\textup{2b}}
\newcommand{\twoC}{\textup{2c}}
\newcommand{\twoD}{\textup{2d}}
\newcommand{\threeA}{\textup{3a}}
\newcommand{\threeB}{\textup{3b}}
\newcommand{\threeC}{\textup{3c}}
\newcommand{\threeD}{\textup{3d}}
\newcommand{\threeE}{\textup{3e}}
\newcommand{\threeF}{\textup{3f}}
\newcommand{\threeG}{\textup{3g}}
\newcommand{\threeH}{\textup{3h}}

\newcommand{\fourA}{\textup{4a}}
\newcommand{\fourB}{\textup{4b}}
\newcommand{\fourC}{\textup{4c}}
\newcommand{\fourD}{\textup{4d}}
\newcommand{\fourE}{\textup{4e}}
\newcommand{\fourF}{\textup{4f}}
\newcommand{\fourG}{\textup{4g}}
\newcommand{\fourH}{\textup{4h}}
\newcommand{\fourAS}{\operatorname{4\ast}}
\newcommand{\fourO}{\operatorname{4\ast}}
\newcommand{\fourU}{\operatorname{4\sharp}}
\newcommand{\fiveA}{\textup{5a}}
\newcommand{\fiveB}{\textup{5b}}
\newcommand{\fiveC}{\textup{5c}}
\newcommand{\fiveD}{\textup{5d}}
\newcommand{\fiveE}{\textup{5e}}
\newcommand{\fiveF}{\textup{5f}}
\newcommand{\fiveST}{\operatorname{5\star}}

\newcommand{\Ia}  {\scalebox{0.4}{\lower14ex\hbox{{\input{Reidemeister-move/1a.tex   }}}}}
\newcommand{\Ib}  {\scalebox{0.4}{\lower14ex\hbox{{\input{Reidemeister-move/1b.tex   }}}}}
\newcommand{\Ic}  {\scalebox{0.4}{\lower14ex\hbox{{\input{Reidemeister-move/1c.tex   }}}}}
\newcommand{\Id}  {\scalebox{0.4}{\lower14ex\hbox{{\input{Reidemeister-move/1d.tex   }}}}}
\newcommand{\Iu}  {\scalebox{0.4}{\lower14ex\hbox{{\input{Reidemeister-move/1ue.tex  }}}}}
\newcommand{\Is}  {\scalebox{0.4}{\lower14ex\hbox{{\input{Reidemeister-move/1sita.tex}}}}}
\newcommand{\Io}  {\scalebox{0.4}{\lower14ex\hbox{{\input{Reidemeister-move/1o.tex   }}}}}
\newcommand{\Iun} {\scalebox{0.4}{\lower14ex\hbox{{\input{Reidemeister-move/1u.tex   }}}}}
\newcommand{\Imid}{\scalebox{0.4}{\lower14ex\hbox{{\input{Reidemeister-move/tyokusenn.tex   }}}}}
\newcommand{\IIa}  {\scalebox{0.4}{\lower14ex\hbox{{\input{Reidemeister-move/2a.tex   }}}}}
\newcommand{\IIb}  {\scalebox{0.4}{\lower14ex\hbox{{\input{Reidemeister-move/2b.tex   }}}}}
\newcommand{\IIc}  {\scalebox{0.4}{\lower14ex\hbox{{\input{Reidemeister-move/2c.tex   }}}}}
\newcommand{\IId}  {\scalebox{0.4}{\lower14ex\hbox{{\input{Reidemeister-move/2d.tex   }}}}}
\newcommand{\IIuu}  {\scalebox{0.4}{\lower14ex\hbox{{\input{Reidemeister-move/2uu.tex   }}}}}
\newcommand{\IIus}  {\scalebox{0.4}{\lower14ex\hbox{{\input{Reidemeister-move/2us.tex   }}}}}
\newcommand{\IIsu}  {\scalebox{0.4}{\lower14ex\hbox{{\input{Reidemeister-move/2su.tex   }}}}}
\newcommand{\IIss}  {\scalebox{0.4}{\lower14ex\hbox{{\input{Reidemeister-move/2ss.tex   }}}}}
\newcommand{\IIo} {\scalebox{0.4}{\lower14ex\hbox{{\input{Reidemeister-move/2o.tex   }}}}}
\newcommand{\IIu} {\scalebox{0.4}{\lower14ex\hbox{{\input{Reidemeister-move/2u.tex   }}}}}
\newcommand{\IIn} {\scalebox{0.4}{\lower14ex\hbox{{\input{Reidemeister-move/2n.tex   }}}}}
\newcommand{\IIIo}{\scalebox{0.4}{\lower14ex\hbox{{\input{Reidemeister-move/3o.tex   }}}}}
\newcommand{\IIIu}{\scalebox{0.4}{\lower14ex\hbox{{\input{Reidemeister-move/3u.tex   }}}}}
\newcommand{\IIIocc}{\scalebox{0.4}{\lower14ex\hbox{{\input{Reidemeister-move/3o-cc.tex   }}}}}
\newcommand{\IIIucc}{\scalebox{0.4}{\lower14ex\hbox{{\input{Reidemeister-move/3u-cc.tex   }}}}}
\newcommand{\IIIao}  {\scalebox{0.5}{\lower14ex\hbox{{\input{Reidemeister-move/3a-o.tex   }}}}}
\newcommand{\IIIau}  {\scalebox{0.5}{\lower14ex\hbox{{\input{Reidemeister-move/3a-u.tex   }}}}}
\newcommand{\IIIbo}  {\scalebox{0.5}{\lower14ex\hbox{{\input{Reidemeister-move/3b-o.tex   }}}}}
\newcommand{\IIIbu}  {\scalebox{0.5}{\lower14ex\hbox{{\input{Reidemeister-move/3b-u.tex   }}}}}
\newcommand{\IIIco}  {\scalebox{0.5}{\lower14ex\hbox{{\input{Reidemeister-move/3c-o.tex   }}}}}
\newcommand{\IIIcu}  {\scalebox{0.5}{\lower14ex\hbox{{\input{Reidemeister-move/3c-u.tex   }}}}}
\newcommand{\IIIdo}  {\scalebox{0.5}{\lower14ex\hbox{{\input{Reidemeister-move/3d-o.tex   }}}}}
\newcommand{\IIIdu}  {\scalebox{0.5}{\lower14ex\hbox{{\input{Reidemeister-move/3d-u.tex   }}}}}
\newcommand{\IIIeo}  {\scalebox{0.5}{\lower14ex\hbox{{\input{Reidemeister-move/3e-o.tex   }}}}}
\newcommand{\IIIeu}  {\scalebox{0.5}{\lower14ex\hbox{{\input{Reidemeister-move/3e-u.tex   }}}}}
\newcommand{\IIIfo}  {\scalebox{0.5}{\lower14ex\hbox{{\input{Reidemeister-move/3f-o.tex   }}}}}
\newcommand{\IIIfu}  {\scalebox{0.5}{\lower14ex\hbox{{\input{Reidemeister-move/3f-u.tex   }}}}}
\newcommand{\IIIgo}  {\scalebox{0.5}{\lower14ex\hbox{{\input{Reidemeister-move/3g-o.tex   }}}}}
\newcommand{\IIIgu}  {\scalebox{0.5}{\lower14ex\hbox{{\input{Reidemeister-move/3g-u.tex   }}}}}
\newcommand{\IIIho}  {\scalebox{0.5}{\lower14ex\hbox{{\input{Reidemeister-move/3h-o.tex   }}}}}
\newcommand{\IIIhu}  {\scalebox{0.5}{\lower14ex\hbox{{\input{Reidemeister-move/3h-u.tex   }}}}}
\newcommand{\IVo}{\scalebox{0.4}{\lower14ex\hbox{{\input{Reidemeister-move/4o.tex   }}}}}
\newcommand{\IVu}{\scalebox{0.4}{\lower14ex\hbox{{\input{Reidemeister-move/4u.tex   }}}}}
\newcommand{\IVoR}{\rotatebox{180}{\scalebox{0.4}{\lower24ex\hbox{{\input{Reidemeister-move/4o.tex   }}}}}}
\newcommand{\IVuR}{\rotatebox{180}{\scalebox{0.4}{\lower24ex\hbox{{\input{Reidemeister-move/4u.tex   }}}}}}
\newcommand{\Vo}{\scalebox{0.4}{\lower14ex\hbox{{\input{Reidemeister-move/5o.tex   }}}}}
\newcommand{\Vu}{\scalebox{0.4}{\lower14ex\hbox{{\input{Reidemeister-move/5u.tex   }}}}}
\newcommand{\VoR}{\rotatebox{180}{\scalebox{0.4}{\lower24ex\hbox{{\input{Reidemeister-move/5o.tex   }}}}}}
\newcommand{\VuR}{\rotatebox{180}{\scalebox{0.4}{\lower24ex\hbox{{\input{Reidemeister-move/5u.tex   }}}}}}

\newcommand{\IVao}  {\scalebox{0.6}{\lower14ex\hbox{{\input{singular-Reidemeister-move/4a-o.tex   }}}}}
\newcommand{\IVau}  {\scalebox{0.6}{\lower14ex\hbox{{\input{singular-Reidemeister-move/4a-u.tex   }}}}}
\newcommand{\IVbo}  {\scalebox{0.6}{\lower14ex\hbox{{\input{singular-Reidemeister-move/4b-o.tex   }}}}}
\newcommand{\IVbu}  {\scalebox{0.6}{\lower14ex\hbox{{\input{singular-Reidemeister-move/4b-u.tex   }}}}}
\newcommand{\IVco}  {\scalebox{0.6}{\lower14ex\hbox{{\input{singular-Reidemeister-move/4c-o.tex   }}}}}
\newcommand{\IVcu}  {\scalebox{0.6}{\lower14ex\hbox{{\input{singular-Reidemeister-move/4c-u.tex   }}}}}
\newcommand{\IVdo}  {\scalebox{0.6}{\lower14ex\hbox{{\input{singular-Reidemeister-move/4d-o.tex   }}}}}
\newcommand{\IVdu}  {\scalebox{0.6}{\lower14ex\hbox{{\input{singular-Reidemeister-move/4d-u.tex   }}}}}

\newcommand{\IVaX}  {\scalebox{0.7}{\lower14ex\hbox{{\input{singular-Reidemeister-move/4a-X.tex   }}}}}

\newcommand{\IVeo}  {\scalebox{0.6}{\lower14ex\hbox{{\input{singular-Reidemeister-move/4e-o.tex   }}}}}
\newcommand{\IVeu}  {\scalebox{0.6}{\lower14ex\hbox{{\input{singular-Reidemeister-move/4e-u.tex   }}}}}
\newcommand{\IVfo}  {\scalebox{0.6}{\lower14ex\hbox{{\input{singular-Reidemeister-move/4f-o.tex   }}}}}
\newcommand{\IVfu}  {\scalebox{0.6}{\lower14ex\hbox{{\input{singular-Reidemeister-move/4f-u.tex   }}}}}
\newcommand{\IVgo}  {\scalebox{0.6}{\lower14ex\hbox{{\input{singular-Reidemeister-move/4g-o.tex   }}}}}
\newcommand{\IVgu}  {\scalebox{0.6}{\lower14ex\hbox{{\input{singular-Reidemeister-move/4g-u.tex   }}}}}
\newcommand{\IVho}  {\scalebox{0.6}{\lower14ex\hbox{{\input{singular-Reidemeister-move/4h-o.tex   }}}}}
\newcommand{\IVhu}  {\scalebox{0.6}{\lower14ex\hbox{{\input{singular-Reidemeister-move/4h-u.tex   }}}}}

\newcommand{\Vao}  {\scalebox{0.5}{\lower14ex\hbox{{\input{singular-Reidemeister-move/5a-o.tex   }}}}}
\newcommand{\Vau}  {\scalebox{0.5}{\lower14ex\hbox{{\input{singular-Reidemeister-move/5a-u.tex   }}}}}
\newcommand{\Vbo}  {\scalebox{0.5}{\lower14ex\hbox{{\input{singular-Reidemeister-move/5b-o.tex   }}}}}
\newcommand{\Vbu}  {\scalebox{0.5}{\lower14ex\hbox{{\input{singular-Reidemeister-move/5b-u.tex   }}}}}
\newcommand{\Vco}  {\scalebox{0.5}{\lower14ex\hbox{{\input{singular-Reidemeister-move/5c-o.tex   }}}}}
\newcommand{\Vcu}  {\scalebox{0.5}{\lower14ex\hbox{{\input{singular-Reidemeister-move/5c-u.tex   }}}}}
\newcommand{\Vdo}  {\scalebox{0.5}{\lower14ex\hbox{{\input{singular-Reidemeister-move/5d-o.tex   }}}}}
\newcommand{\Vdu}  {\scalebox{0.5}{\lower14ex\hbox{{\input{singular-Reidemeister-move/5d-u.tex   }}}}}
\newcommand{\Veo}  {\scalebox{0.5}{\lower14ex\hbox{{\input{singular-Reidemeister-move/5e-o.tex   }}}}}
\newcommand{\Veu}  {\scalebox{0.5}{\lower14ex\hbox{{\input{singular-Reidemeister-move/5e-u.tex   }}}}}
\newcommand{\Vfo}  {\scalebox{0.5}{\lower14ex\hbox{{\input{singular-Reidemeister-move/5f-o.tex   }}}}}
\newcommand{\Vfu}  {\scalebox{0.5}{\lower14ex\hbox{{\input{singular-Reidemeister-move/5f-u.tex   }}}}}

\newcommand{\Isiso}  {\scalebox{0.4}{\lower14ex\hbox{{\input{Reidemeister-move/1sita-iso.tex}}}}}
\newcommand{\IsIIc}  {\scalebox{0.4}{\lower14ex\hbox{{\input{Reidemeister-move/1sita-2c.tex}}}}}
\newcommand{\IsIIcinv}  {\scalebox{0.4}{\lower14ex\hbox{{\input{Reidemeister-move/1sita-2c-inv}}}}}
\newcommand{\Iuiso}  {\scalebox{0.4}{\lower14ex\hbox{{\input{Reidemeister-move/1ue-iso.tex}}}}}
\newcommand{\IuIId}  {\scalebox{0.4}{\lower14ex\hbox{{\input{Reidemeister-move/1ue-2d.tex}}}}}
\newcommand{\IuIIdinv}  {\scalebox{0.4}{\lower14ex\hbox{{\input{Reidemeister-move/1ue-2d-inv.tex}}}}}
\newcommand{\IIuuIaIId}  {\scalebox{0.4}{\lower14ex\hbox{{\input{Reidemeister-move/2uu-1a-2d.tex}}}}}
\newcommand{\IIuuIaIIdIIId}  {\scalebox{0.4}{\lower14ex\hbox{{\input{Reidemeister-move/2uu-1a-2d-3d.tex}}}}}
\newcommand{\IIIaoIIc}  {\scalebox{0.4}{\lower14ex\hbox{{\input{Reidemeister-move/3a-o-2c.tex}}}}}
\newcommand{\IIIaoIIcIIIc}  {\scalebox{0.4}{\lower14ex\hbox{{\input{Reidemeister-move/3a-o-2c-3c.tex}}}}}

\newcommand{\IVaoIIc}  {\scalebox{0.6}{\lower14ex\hbox{{\input{singular-Reidemeister-move/4a-2c}}}}}
\newcommand{\IVaoIIcIVb}  {\scalebox{0.6}{\lower14ex\hbox{{\input{singular-Reidemeister-move/4a-2c-4b}}}}}

\newcommand{\VaoIb}  {\scalebox{0.5}{\lower14ex\hbox{{\input{singular-Reidemeister-move/5a-1b}}}}}
\newcommand{\VaoIbIVb}  {\scalebox{0.5}{\lower14ex\hbox{{\input{singular-Reidemeister-move/5a-1b-4b}}}}}
\newcommand{\VaoIbIVbiso}  {\scalebox{0.5}{\lower14ex\hbox{{\input{singular-Reidemeister-move/5a-1b-4b-iso}}}}}
\newcommand{\VaoIbIVbisoVe}  {\scalebox{0.5}{\lower14ex\hbox{{\input{singular-Reidemeister-move/5a-1b-4b-iso-5e}}}}}
\newcommand{\VaoIbVIbisoVeiso}  {\scalebox{0.5}{\lower14ex\hbox{{\input{singular-Reidemeister-move/5a-1b-4b-iso-5e-iso}}}}}
\newcommand{\VaoIbIVbisoVeisoIVg}  {\scalebox{0.5}{\lower14ex\hbox{{\input{singular-Reidemeister-move/5a-1b-4b-iso-5e-iso-4g}}}}}
\newcommand{\VaoIbIVbisoVeisoIVgIb}  {\scalebox{0.5}{\lower14ex\hbox{{\input{singular-Reidemeister-move/5a-1b-4b-iso-5e-iso-4g-1b}}}}}

\newcommand{\VauL}  {\scalebox{0.5}{\lower14ex\hbox{{\input{singular-Reidemeister-move/5a-u-long}}}}}
\newcommand{\VauLIIaIIb}  {\scalebox{0.5}{\lower14ex\hbox{{\input{singular-Reidemeister-move/5a-u-long-2a2b}}}}}
\newcommand{\VauLIIaIIbVd}  {\scalebox{0.5}{\lower14ex\hbox{{\input{singular-Reidemeister-move/5a-u-long-2a2b-5d}}}}}
\newcommand{\VaoL}  {\scalebox{0.5}{\lower14ex\hbox{{\input{singular-Reidemeister-move/5a-o-long}}}}}

\newcommand{\ADPpm}  {\scalebox{0.2}{\lower14ex\hbox{{\input{Reidemeister-move/ADP(+-).tex}}}}}
\newcommand{\ADPmp}  {\scalebox{0.2}{\lower14ex\hbox{{\input{Reidemeister-move/ADP(-+).tex}}}}}
\newcommand{\ADCpm}  {\scalebox{0.2}{\lower14ex\hbox{{\input{Reidemeister-move/ADC(+-).tex}}}}}
\newcommand{\ADCmp}  {\scalebox{0.2}{\lower14ex\hbox{{\input{Reidemeister-move/ADC(-+).tex}}}}}
\newcommand{\ADC}    {\scalebox{0.2}{\lower14ex\hbox{{\input{Reidemeister-move/ADC.tex}}}}}
\newcommand{\ADP}    {\scalebox{0.2}{\lower14ex\hbox{{\input{Reidemeister-move/ADP.tex}}}}}

\newcommand{\DLone}  {\scalebox{0.6}{\lower14ex\hbox{{\input{Reidemeister-move/DL1.tex}}}}}
\newcommand{\DLtwo}  {\scalebox{0.6}{\lower14ex\hbox{{\input{Reidemeister-move/DL2.tex}}}}}

\newcommand{\GDex}{{\scalebox{0.2}{\lower14ex\hbox{{\input{Reidemeister-move/GD-ex.tex}}}}}}
\newcommand{\Hopf}{{\scalebox{0.45}{\lower14ex\hbox{{\input{Reidemeister-move/Hopf-Link.tex}}}}}}

\newcommand{\SHopfO}{\includegraphics[width=0.475\linewidth]{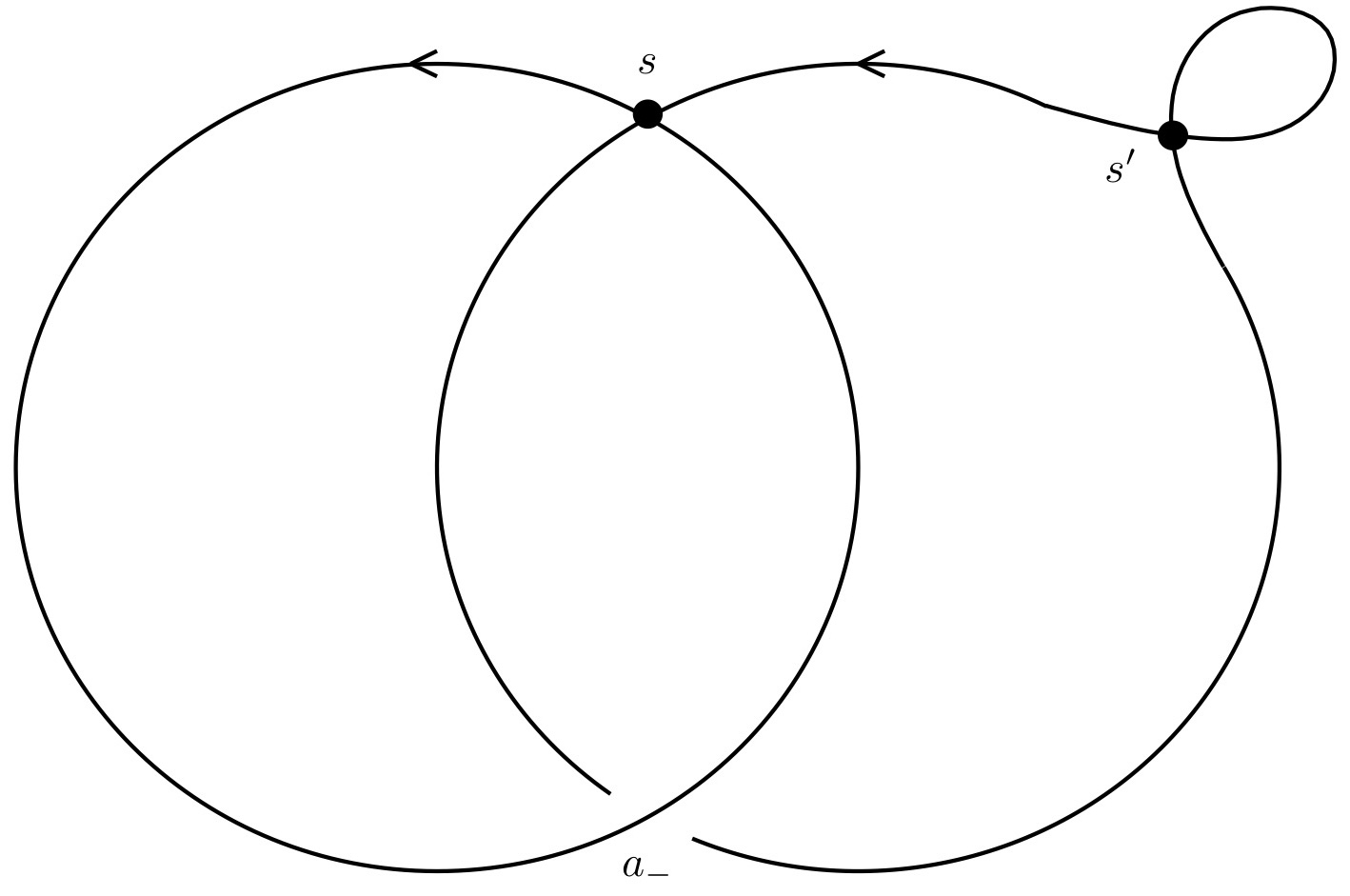}}
\newcommand{\SHopfU}{\includegraphics[width=0.475\linewidth]{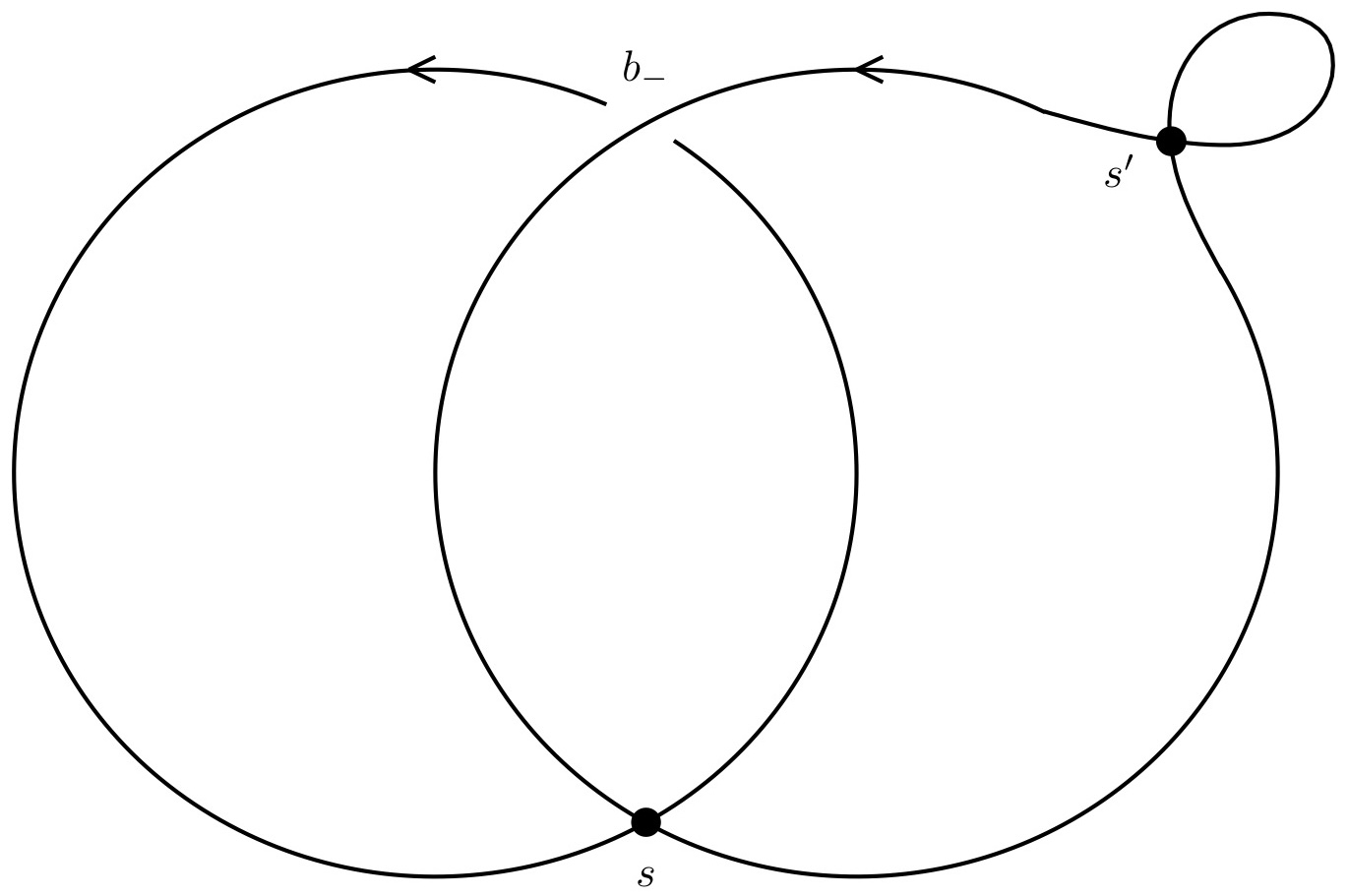}}
\newcommand{\ExampleO}{\includegraphics[width=0.32\linewidth]{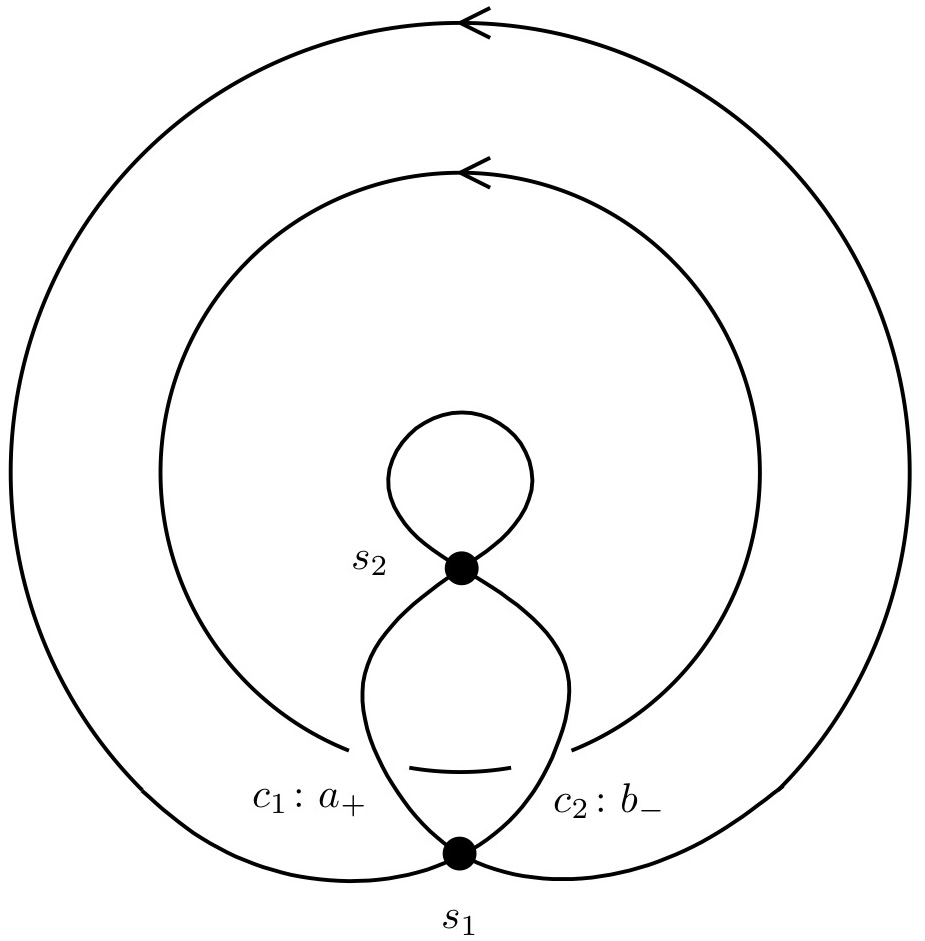}}
\newcommand{\ExampleU}{\includegraphics[width=0.32\linewidth]{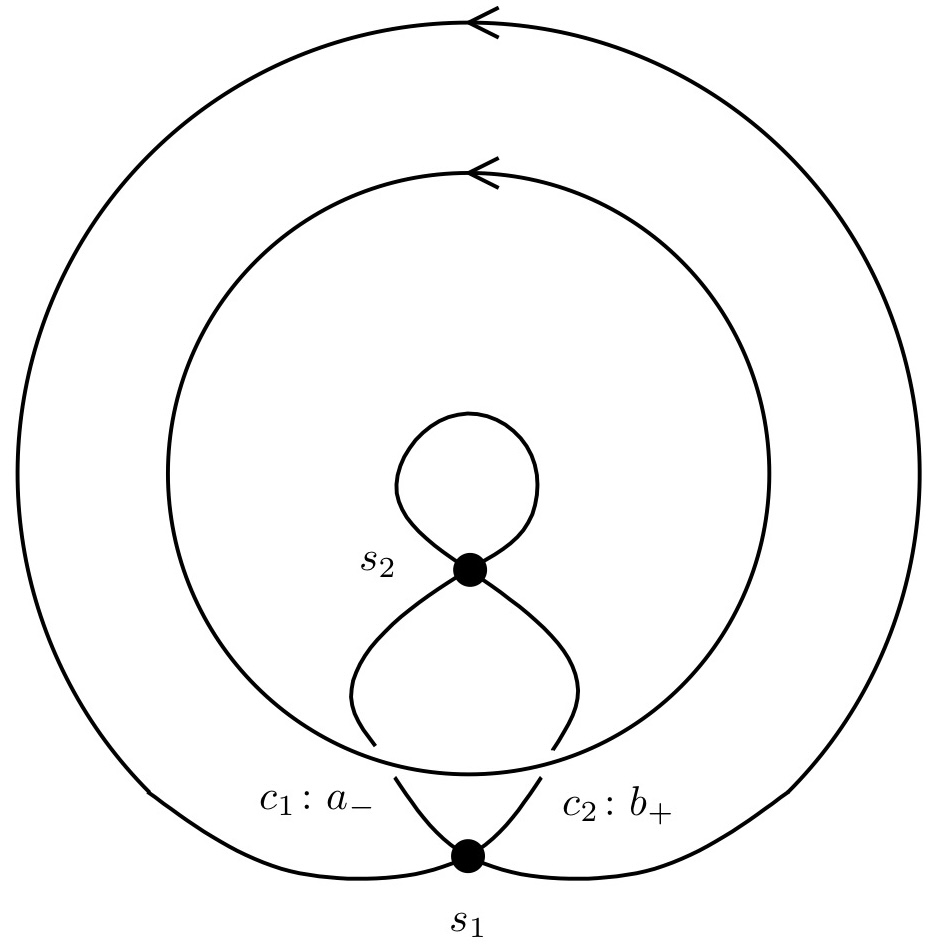}}

\title{Minimal Generating Sets of Singular Reidemeister Moves and Their Classification}
\author{Noboru Ito}
\address{Department of Mathematics, Faculty of Engineering, Shinshu University, Wakasato 4-17-1, Nagano, Nagano 380-8553, Japan}
\email{nito@shinshu-u.ac.jp}
\author{Yuichiro Iwamoto}
\address{Department of Science and Technology, Graduate School of Medicine, Science and Technology,
Shinshu University, Asahi 3-1-1, Matsumoto, Nagano 390-8626, Japan}
\email{IwamotoY.math@gmail.com}
\date{June 11, 2026}

\begin{document}
\maketitle
\begin{abstract}
Singular knot theory extends classical knot theory by allowing transverse
double points without over/under information, together with singular
Reidemeister moves of types~$\IV$ and~$\V$.
A central open problem in this theory is to determine the minimal
generating sets of oriented singular Reidemeister moves.
In this paper, we completely solve this problem.
In addition, we establish independence results for singular Reidemeister moves
by introducing an invariant that provides obstructions
and lower bounds for generating sets,
including the independence of type~$\III$
from types~$\I$, $\II$, $\IV$, and $\V$.
More precisely, starting from a minimal generating set of ordinary
Reidemeister moves of types~$\I$--$\III$, we prove that the singular
moves admit exactly $96$ distinct inclusion-minimal generating sets,
and that these exhaust all possibilities.
Our proof introduces a new invariant for singular links,
constructed via a projection to self-singular links, which detects the
distinction between the two families of type~$\IV$ moves and provides
an obstruction for generating type~$\V$ moves from types~$\I$--$\IV$.
We also determine the unoriented case, where the classification
collapses to exactly $8$ minimal generating sets.
\end{abstract}
\section{Introduction}\label{Intro}

A \emph{singular knot}, \emph{singular link}, or \emph{singular tangle} is defined as an immersion of a circle (knot), a collection of circles (link), or properly embedded arcs (tangle) in $\mathbb{R}^3$ that permits \emph{singular crossings} that are transverse double points without over/under information.  
Note that a knot, link, or tangle diagram with no singular crossings is just an ordinary knot, link, or tangle diagram.
(i.e.,  embedding without singular crossings).    

This framework has found recent relevance in modeling molecular structures, notably in biology \cite{ADEM2020}.  For example, in the representation of proteins, hydrogen bonds, which are key intra-chain contacts, fit naturally as singular crossings within projected diagrams. Therefore,  singular knot theory has been extended to account for these intra-chain interactions, significantly broadening its applicability to otherwise unknotted proteins.   

In many biological models of singular links, moves of type~$\V$
naturally arise and are therefore included as part of the standard
equivalence relation.  The presence of type~$\V$ significantly
complicates the classification problem, since this move changes the labels assigned to inter-component crossings
and thus invalidates several naive diagrammatic invariants. Consequently, determining minimal generating sets in the
presence of type~$\V$ becomes substantially more difficult than in the
case involving only types~$\I$--$\IV$.

Equivalence between two singular knots, links, or tangles is given by \emph{singular Reidemeister moves}: types $\I$, $\II$, $\III$ (\emph{ordinary} Reidemeister moves from non-singular theory), and $\IV$, $\V$ that specifically account for singular crossings.  

Two singular diagrams represent the same object if one can be transformed into the other by a finite sequence of these moves (and planar isotopies).  
Here, singular Reidemeister moves are local replacements applied to neighborhoods of diagrams, each supported inside an oriented embedded disk in the plane, called a \emph{changing disk}:     
\begin{figure}[h]
        \centering
        \includegraphics[width=0.75\linewidth]{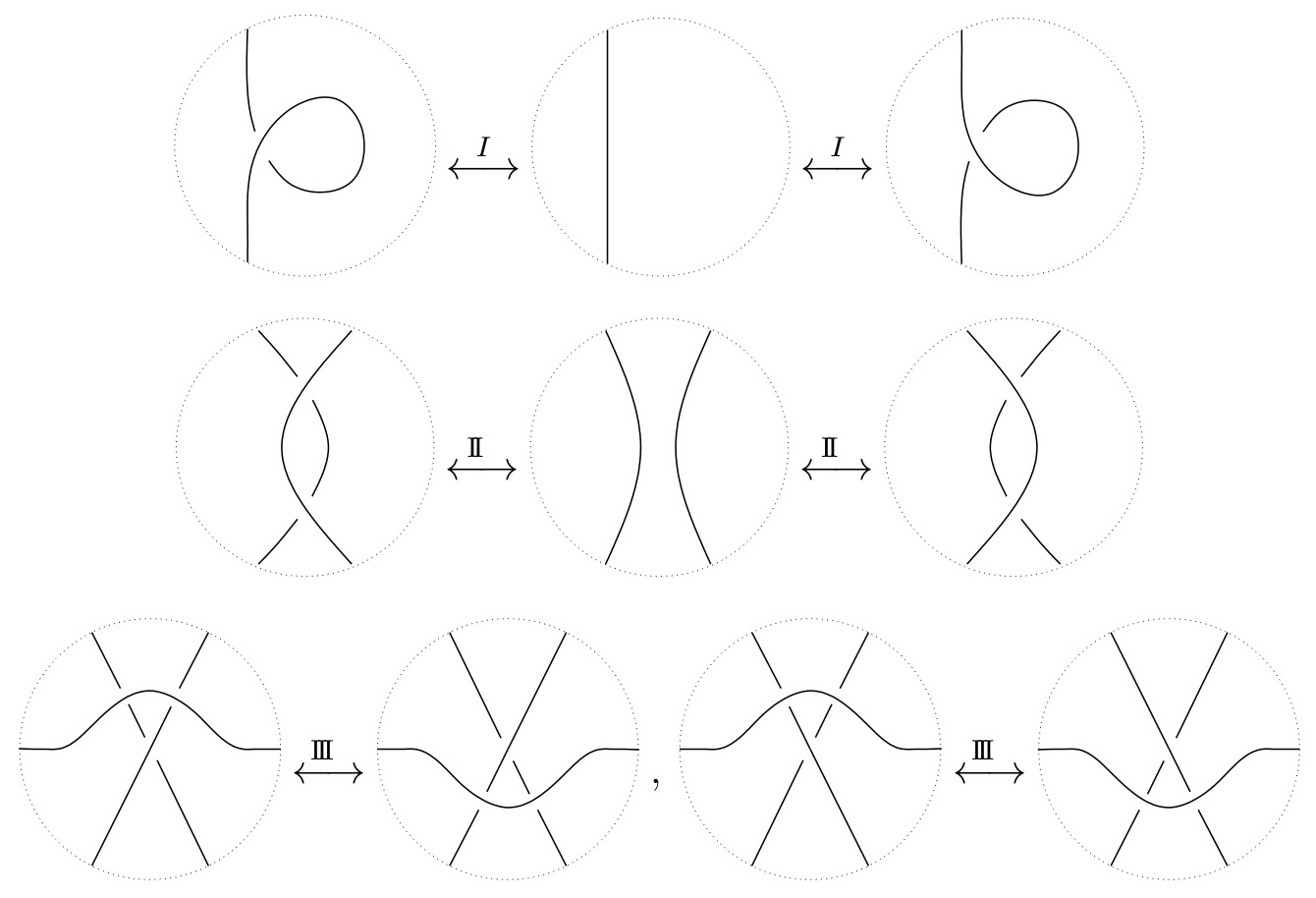}
\end{figure}
\begin{figure}[h]
        \centering
        \includegraphics[width=0.75\linewidth]{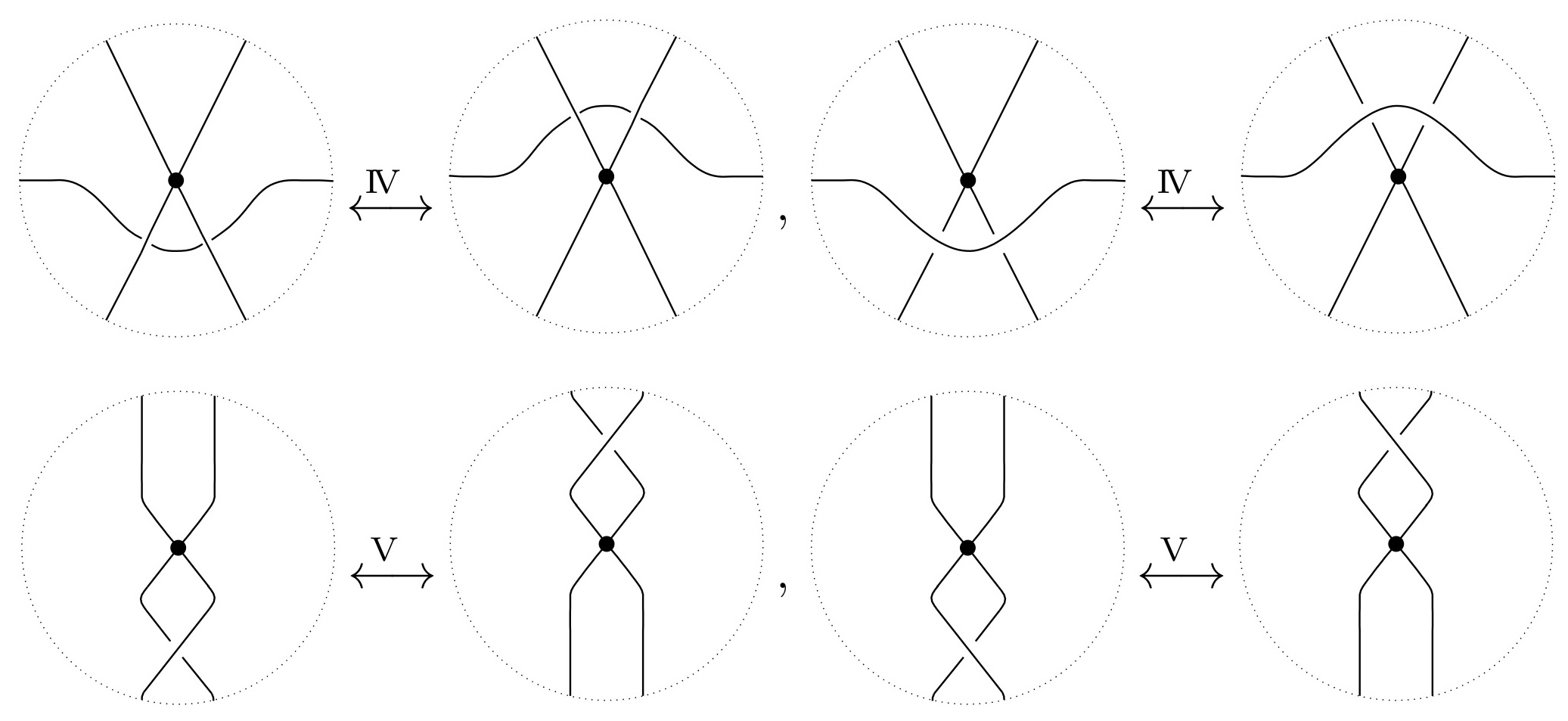}
\end{figure}






Building on this foundation, Bataineh-Elhamdadi-Hajij-Youmans \cite{BEHY2018} introduced a specific generating set for oriented singular Reidemeister moves in their work on singquandles, and posed several open problems.  One of the central open problems posed in \cite{BEHY2018} is the following. 
\begin{prob}[{\cite[The first problem in open questions (Section~6)]{BEHY2018}}]\label{prob:smgs}
Find other generating sets of oriented singular Reidemeister moves and prove their minimality.  
\end{prob}
Here minimality is understood in the inclusion sense: removing any one move loses the generating property.  

In this paper, we completely solve this problem.  
In particular, our result not only produces new generating sets,
but gives a complete classification of all inclusion-minimal
generating sets, thereby resolving the “other” aspect of
Problem~\ref{prob:smgs} in full.

The key difficulty is the presence of type~$\V$ moves.
Unlike types~$\I$--$\IV$, a move of type~$\V$ changes the labels of inter-component crossings,
so several naive diagrammatic invariants are no longer available.
While generating sets of oriented singular Reidemeister moves have been considered in the literature, even the basic structure of minimal generating sets has remained unknown.
A key ingredient of our proof is a new diagrammatic invariant
for singular links, based on a natural projection from the
category of singular link diagrams to that of self-singular
link diagrams obtained by resolving inter-component singular
crossings.
This invariant is preserved under Reidemeister moves of
types~$\I$--$\III$ and~$\V$, while detecting the distinction
between the two types of type~$\IV$ moves.  
This observation leads to an invariant-based obstruction:
the labels of inter-component crossings detect that moves of type~$\V$
cannot be generated by moves of types~$\I$--$\IV$,
a fact that will be used to establish the minimality results below. 
We have completely solved Problem~\ref{prob:smgs}. 
We list the minimal generating sets (Table~\ref{ListSMGS}) 
and prove their minimality for oriented singular Reidemeister moves
for knots, links, and tangles.
Our result provides the first complete determination of
inclusion-minimal generating sets of $\R \cup \SR$
in the presence of type~$\V$ moves.
In this sense, our result may be regarded as a singular analogue of
Polyak's classification of minimal generating sets for the ordinary
Reidemeister moves.

\begin{thm}\label{thm:mainR}
Let $M$ be a minimal generating set of the ordinary Reidemeister moves $\R$.
Then the inclusion-minimal generating sets of $\R \cup \SR$
are exactly the sets of the form
\[
M \cup \{ m_{\IV}^{(O)},\, m_{\IV}^{(U)},\, m_{\V} \},
\]
where $m_{\IV}^{(O)} \in \RIVO$, $m_{\IV}^{(U)} \in \RIVU$, and $m_{\V} \in \RV$.
In particular, there are
\[
 4 \times 4 \times 6 = 96 
\]
such minimal generating sets (Table~\ref{ListSMGS}).
\end{thm}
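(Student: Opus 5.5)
We prove the two inclusions separately: every set of the stated form generates $\R \cup \SR$, and every inclusion-minimal generating set has this form. The count $4\times 4\times 6$ then follows from $|\RIVO|=|\RIVU|=4$ and $|\RV|=6$.

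\emph{Generation.} Since $M$ generates all of $\R$, we may use every ordinary move freely. We would show, by explicit diagrammatic derivations (as in the figures combining $\IVao$ with $\IIc$ and $\IVbo$), that any single move of $\RIVO$, together with moves of types $\I$--$\III$, yields the other three moves of $\RIVO$. The idea is to conjugate by type $\II$ moves to rotate the strand passing over the singular crossing and change its orientation. The same argument, with over and under exchanged, works for $\RIVU$. Next, assuming all of $\RIV$, any one move of $\RV$ should generate the remaining five. We would do this with the type~$\I$ kink-sliding trick (cf. the sequence $\VaoIb$, $\VaoIbIVb$, \dots, $\VaoIbIVbisoVeisoIVgIb$): add a curl, slide it through the singular crossing using a $\IV$ move, apply the given $\V$ move, and remove the curl. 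We must also handle passing between the $O$ and $U$ versions of $\V$ using type~$\II$ moves and a $\IV$ move. This is a finite case check.

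\emph{Necessity and minimality.} We need three independence statements.
\begin{enumerate}
\item Moves of $\RIVO$ cannot be generated by $\R \cup \RIVU \cup \RV$, and symmetrically $\RIVU$ cannot be generated by $\R \cup \RIVO \cup \RV$.
\item Moves of $\RV$ cannot be generated by $\R\cup\RIV$.
\item Each move of $M$ remains necessary in the presence of $\SR$.
\end{enumerate}

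For (1) we use the projection to self-singular links described in the introduction. Inter-component singular crossings are resolved, for instance all as positive crossings, and an invariant of the image is chosen that is unchanged under types $\I$--$\III$ and $\V$ and under one family of $\IV$ moves but altered by the other. An over-strand and an under-strand pass through the resolved crossing asymmetrically, so a linking or Alexander-type quantity should detect the difference. We then exhibit a pair of diagrams, such as singular Hopf-type links, related by a single move of one family and separated by the invariant.

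For (2), a move of $\V$ changes which inter-component crossing is singular. We attach labels to inter-component crossings that are preserved by $\I$--$\IV$ and changed by $\V$, and show with an example that this label count changes.

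For (3), we reuse Polyak's invariants for $M$: each one witnessing the necessity of a move in $M$ is extended to singular diagrams so that it is invariant under all of $\SR$. The extension is by smoothing or by ignoring singular crossings, which should be compatible with $\IV$ and $\V$ because these moves involve no ordinary crossing of the kind the invariant measures.

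Given (1)--(3), a generating set must contain $M$, at least one move from each of $\RIVO$, $\RIVU$ and $\RV$, and nothing more if it is minimal. Here we also use that $M$ is interchangeable with any other minimal generating set of $\R$, so minimality reduces to the three singular classes. This gives exactly the stated sets.

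The main obstacle is (1): producing an invariant compatible with type~$\V$ that separates $\IVO$ from $\IVU$. Type~$\V$ relabels inter-component crossings, so the projection must be designed so that the relabelling becomes an ordinary Reidemeister equivalence after resolution. I would first try resolving every inter-component singular crossing to a fixed crossing type and checking each of the six $\V$ moves by hand.
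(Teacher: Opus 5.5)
Your architecture is the paper's: one-move generation inside $\RIVO$, $\RIVU$ and $\RV$ (Lemmas~\ref{Generating4}, \ref{Generating5-1}, \ref{Generating5-2}), an $a/b$ count of inter-component crossings against type~V (the paper's $\delta$), and the projection $\overline{\operatorname{pr}_+}$ to separate the two type~IV families.

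The genuine gap is step~(1). You flag it as the crux but leave it open, and the candidates you name cannot close it.

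\emph{Topological invariants of the image cannot work.} A linking number, or an Alexander polynomial of the singular link group or defined via skein relations, is an isotopy invariant of self-singular links. Since $\operatorname{pr}_+$ is well defined on singular links (Proposition~\ref{prop:SL}) and every $\IVU$ move is itself an isotopy, such a composite is unchanged by $\IVU$ just as by $\IVO$.

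\emph{Resolving the remaining singular crossings does not help either.} The same holds for any ordinary-link invariant applied after resolving or smoothing the remaining singular crossings. Both $\IVO$ and $\IVU$ then become type~III or type~II moves.

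\emph{Your heuristic points at the wrong crossings.} At the resolved inter-component crossings both families are already type~III moves and so are invisible. The asymmetry must be read off at the self-singular crossings that survive the projection. It must also be measured by a purely diagrammatic quantity that is invariant only under a proper subset of the moves, not by any link invariant.

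This is exactly what the paper's $f$ provides.
\begin{itemize}
\item Each self-singular crossing $s$ on the first component is regarded as a positive crossing.
\item Gauss phrases are based just before $s$ on its over-branch and on its under-branch.
\item The separation numbers $\tau_o(s,c),\tau_u(s,c)$ of inter-component ordinary crossings $c$ are grouped by Turaev's types into the exponents $\tau(s,a_+)-\tau(s,b_-)$ and $\tau(s,a_-)-\tau(s,b_+)$.
\end{itemize}
Its invariance properties are then as follows.
\begin{itemize}
\item Type~II moves cancel within the pairs $(a_+,b_-)$ and $(a_-,b_+)$.
\item An $\IVO$ move can only change the second entry, and an $\IVU$ move only the first.
\item Using $\tau=\tau_o+\tau_u$, which is symmetric in the two branches, gives invariance under type~V at self-singular crossings.
\item Type~V at inter-component crossings is absorbed by $\overline{\operatorname{pr}_+}$.
\end{itemize}
Evaluating the first entry on the diagram of Figure~\ref{fig:IVOindep}, which $\IVU$ and ordinary moves turn into a split diagram, gives the obstruction. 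Without some such construction, the lower bound forcing one move from each of $\RIVO$ and $\RIVU$ is unproved.

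A smaller point concerns (3): you do not need to lift Polyak's invariants. No move of $\R\cup\SR$ creates or removes singular crossings, so no type~IV or~V move can ever occur inside the disk of an ordinary move. Hence $S$ generates $\R$ if and only if $S\cap\R$ does. This is also the statement the ``exactly'' direction actually needs: $S\cap\R$ must contain some element of $\MGSordi$, not necessarily your fixed $M$. Showing that each move of $M$ stays necessary does not by itself give this.
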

This description makes explicit the structural decomposition of minimal generating sets.  
In particular, the classification reflects the decomposition of singular Reidemeister moves into two independent families of type~$\IV$ moves and one family of type~$\V$ moves.

To illustrate, choose one from each of the two unoriented types of singular Reidemeister moves of type~\IV, and one from the unoriented types of moves of type~\V, and assign arbitrary orientations to these three moves.  The resulting set is an example of the minimal generating sets as described in Theorem~\ref{thm:mainR}.  By repeating this process for all possible choices, we obtain all minimal generating sets described in Theorem~\ref{thm:mainR}.  
In the rest of this paper, we refer to such minimal generating sets of singular Reidemeister moves as \emph{singular minimal generating sets}.   

The proof proceeds in two steps.
First, we determine which subsets of type~$\IV$ moves can generate all
type~$\IV$ moves together with the ordinary Reidemeister moves.
Second, we analyze the interaction with type~$\V$ moves using an invariant
constructed in Section~\ref{sec:invariant}.
Combining these results yields the complete classification stated in
Theorem~\ref{thm:mainR}.
\begin{table}[t]
\caption{The $96$ inclusion-minimal generating sets of singular Reidemeister moves.}
\label{ListSMGS}
\begin{center}
\begin{tabular}{|c|c|c|c|c|c|} 
    \hline
   $\{\fiveA, \fourA, \fourE \}$ & $\{\fiveB, \fourA, \fourE \}$ & $\{\fiveC, \fourA, \fourE \}$  &  $\{\fiveD, \fourA, \fourE \}$  & $\{\fiveE, \fourA, \fourE \}$ & $\{\fiveF, \fourA, \fourE \}$       \\
   $\{\fiveA, \fourA, \fourF \}$ & $\{\fiveB, \fourA, \fourF \}$ & $\{\fiveC, \fourA, \fourF \}$  &  $\{\fiveD, \fourA, \fourF \}$  & $\{\fiveE, \fourA, \fourF \}$ & $\{\fiveF, \fourA, \fourF \}$       \\
   $\{\fiveA, \fourA, \fourG \}$ & $\{\fiveB, \fourA, \fourG \}$ & $\{\fiveC, \fourA, \fourG \}$  &  $\{\fiveD, \fourA, \fourG \}$  & $\{\fiveE, \fourA, \fourG \}$ & $\{\fiveF, \fourA, \fourG \}$       \\
   $\{\fiveA, \fourA, \fourH \}$ & $\{\fiveB, \fourA, \fourH \}$ & $\{\fiveC, \fourA, \fourH \}$  &  $\{\fiveD, \fourA, \fourH \}$  & $\{\fiveE, \fourA, \fourH \}$ & $\{\fiveF, \fourA, \fourH \}$       \\
   
   $\{\fiveA, \fourB, \fourE \}$ & $\{\fiveB, \fourB, \fourE \}$ & $\{\fiveC, \fourB, \fourE \}$ & $\{\fiveD, \fourB, \fourE \}$ & $\{\fiveE, \fourB, \fourE \}$ & $\{\fiveF, \fourB, \fourE \}$       \\
   $\{\fiveA, \fourB, \fourF \}$ & $\{\fiveB, \fourB, \fourF \}$ & $\{\fiveC, \fourB, \fourF \}$ & $\{\fiveD, \fourB, \fourF \}$ & $\{\fiveE, \fourB, \fourF \}$ & $\{\fiveF, \fourB, \fourF \}$       \\
   $\{\fiveA, \fourB, \fourG \}$ & $\{\fiveB, \fourB, \fourG \}$ & $\{\fiveC, \fourB, \fourG \}$ & $\{\fiveD, \fourB, \fourG \}$ & $\{\fiveE, \fourB, \fourG \}$ & $\{\fiveF, \fourB, \fourG \}$       \\
   $\{\fiveA, \fourB, \fourH \}$ & $\{\fiveB, \fourB, \fourH \}$ & $\{\fiveC, \fourB, \fourH \}$ & $\{\fiveD, \fourB, \fourH \}$ & $\{\fiveE, \fourB, \fourH \}$ & $\{\fiveF, \fourB, \fourH \}$       \\
   
   $\{\fiveA, \fourC, \fourE \}$ & $\{\fiveB, \fourC, \fourE \}$ & $\{\fiveC, \fourC, \fourE \}$ & $\{\fiveD, \fourC, \fourE \}$ & $\{\fiveE, \fourC, \fourE \}$ & $\{\fiveF, \fourC, \fourE \}$       \\
   $\{\fiveA, \fourC, \fourF \}$ & $\{\fiveB, \fourC, \fourF \}$ & $\{\fiveC, \fourC, \fourF \}$ & $\{\fiveD, \fourC, \fourF \}$ & $\{\fiveE, \fourC, \fourF \}$ & $\{\fiveF, \fourC, \fourF \}$       \\
   $\{\fiveA, \fourC, \fourG \}$ & $\{\fiveB, \fourC, \fourG \}$ & $\{\fiveC, \fourC, \fourG \}$ & $\{\fiveD, \fourC, \fourG \}$ & $\{\fiveE, \fourC, \fourG \}$ & $\{\fiveF, \fourC, \fourG \}$       \\
   $\{\fiveA, \fourC, \fourH \}$ & $\{\fiveB, \fourC, \fourH \}$ & $\{\fiveC, \fourC, \fourH \}$ & $\{\fiveD, \fourC, \fourH \}$ & $\{\fiveE, \fourC, \fourH \}$ & $\{\fiveF, \fourC, \fourH \}$       \\
   
   $\{\fiveA, \fourD, \fourE \}$ & $\{\fiveB, \fourD, \fourE \}$ & $\{\fiveC, \fourD, \fourE \}$  & $\{\fiveD, \fourD, \fourE \}$ & $\{\fiveE, \fourD, \fourE \}$ & $\{\fiveF, \fourD, \fourE \}$       \\
   $\{\fiveA, \fourD, \fourF \}$ & $\{\fiveB, \fourD, \fourF \}$ & $\{\fiveC, \fourD, \fourF \}$  & $\{\fiveD, \fourD, \fourF \}$ & $\{\fiveE, \fourD, \fourF \}$ & $\{\fiveF, \fourD, \fourF \}$       \\
   $\{\fiveA, \fourD, \fourG \}$ & $\{\fiveB, \fourD, \fourG \}$ & $\{\fiveC, \fourD, \fourG \}$  & $\{\fiveD, \fourD, \fourG \}$ & $\{\fiveE, \fourD, \fourG \}$ & $\{\fiveF, \fourD, \fourG \}$       \\
   $\{\fiveA, \fourD, \fourH \}$ & $\{\fiveB, \fourD, \fourH \}$ & $\{\fiveC, \fourD, \fourH \}$  & $\{\fiveD, \fourD, \fourH \}$ & $\{\fiveE, \fourD, \fourH \}$ & $\{\fiveF, \fourD, \fourH \}$       \\
\hline
\end{tabular}
\end{center}
\end{table}
\begin{rem}
Related configurations of singular Reidemeister moves
have appeared in the context of Legendrian knots
(see \cite{YamaguchiIto2022}), although minimality and complete classification  
were not considered there.
\end{rem}

We further extend our analysis to the unoriented setting,
where the classification collapses to $8$ minimal generating sets.
This appears to be the first complete determination in the
unoriented singular setting.
We denote by $\RI^{\!\mathrm{un}}, \RII^{\!\mathrm{un}}, \RIII^{\!\mathrm{un}}, 
\RIVO^{\mathrm{un}}, \RIVU^{\mathrm{un}}, \RV^{\!\mathrm{un}}$
the sets of unoriented Reidemeister moves corresponding to 
$\RI, \RII, \RIII, \RIVO, \RIVU, \RV$, respectively.
\begin{thm}\label{thm:unoriented}
The minimal generating sets of unoriented singular Reidemeister moves
consist of exactly one move from each of the sets
$\RI^{\!\mathrm{un}}, \RII^{\!\mathrm{un}}, \RIII^{\!\mathrm{un}},
\RIVO^{\mathrm{un}}, \RIVU^{\mathrm{un}}, \RV^{\!\mathrm{un}}$.
In particular, there are $2^3 = 8$ such minimal generating sets.
\end{thm}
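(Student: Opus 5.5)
The plan is to deduce the unoriented classification from the oriented one, Theorem~\ref{thm:mainR}, by forgetting orientations, and then to supply unoriented obstructions for the lower bound. The argument has two halves.

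\emph{Sufficiency.} Fix one move from each of $\RI^{\!\mathrm{un}}, \RII^{\!\mathrm{un}}, \RIII^{\!\mathrm{un}}, \RIVO^{\mathrm{un}}, \RIVU^{\mathrm{un}}, \RV^{\!\mathrm{un}}$. I will show that these six moves generate every unoriented singular Reidemeister move.
\begin{enumerate}
\item For the ordinary part, I use the unoriented analogue of Polyak's argument. Every oriented move forgets to an unoriented move, and every unoriented move is such a forgetting. Orienting the chosen unoriented moves in all possible ways gives a family containing a minimal generating set of $\R$ in Polyak's sense. For each chosen unoriented type, I will check that the local derivations in Polyak's proof use only that type (re-oriented) together with moves of the other types. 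Concretely, this shows that the other element of a two-element set, such as the mirror-type $\Ino$ move or the other $\IIIno$ move, is a consequence of the chosen element together with $\IIno$, $\IIIno$ and planar isotopy.
\item For the singular part, I forget the orientations in a set $\{m_{\IV}^{(O)}, m_{\IV}^{(U)}, m_{\V}\}$ of Theorem~\ref{thm:mainR}. Since every orientation of each chosen unoriented move is available, every oriented move of $\RIVO$, $\RIVU$ and $\RV$ is generated. Forgetting orientations then gives all of $\RIVO^{\mathrm{un}}$, $\RIVU^{\mathrm{un}}$ and $\RV^{\!\mathrm{un}}$.
\end{enumerate}

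\emph{Necessity.} I will show that every generating set meets each of the six families, by giving for each family an unoriented invariant preserved by all moves outside it but changed by some move in it.
\begin{itemize}
\item For $\IIno$: the number of crossings modulo~$2$ changes only under $\Ino$ and $\IIno$. Together with an argument separating $\Ino$ from $\IIno$, as in the ordinary unoriented theory, this handles $\IIno$.
\item For $\Ino$: I would use a self-writhe-type count, such as the number of monogon-producing crossings or a Whitney index combined with the writhe. It must be insensitive to the singular moves, since those do not create kinks.
\item For $\IIIno$: I use the independence invariant of this paper, which separates $\IIIno$ from $\Ino$, $\IIno$, $\IVno$, $\Vno$. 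It is constructed without orientation data, or can be symmetrized so that it is.
\item For $\IVno_O$ versus $\IVno_U$: I use the invariant of Section~\ref{sec:invariant}, obtained by projecting to self-singular links. This needs care: I must check that its value does not depend on the orientation chosen to evaluate it, or else take a sum over all orientations.
\item For $\Vno$: I use the label obstruction on inter-component crossings, which is likewise orientation-free.
\end{itemize}

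Combining the two halves, a generating set must meet every family, and any choice of one element per family already generates. Hence the inclusion-minimal generating sets are exactly the one-per-family choices. Their number is the product of the family sizes, which is $2\cdot1\cdot2\cdot1\cdot1\cdot2=2^3$ with the families read off from the unoriented pictures.

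I expect the main obstacle to be the $\Ino$ and $\IIIno$ parts. In the unoriented setting one must verify that the two variants within a family really are mutually derivable; otherwise the count would change and the families would split. The unoriented obstruction for $\IIIno$ in the presence of $\IVno$ and $\Vno$ needs the same check. I would attack both first by examining the explicit Polyak-type derivations, such as a kink slid through a crossing via $\IIno$ and $\IIIno$, in the unoriented pictures.
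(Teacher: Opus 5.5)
Your outline follows the paper's proof. Each family is shown necessary by an obstruction: the classical results for types~\Ino--\IIIno, $f(\overline{\operatorname{pr}_+}(D))$ for $\IVO$ versus $\IVU$, and the inter-component labels for type~\Vno. Sufficiency comes from transporting the oriented generation lemmas, and the count is $2\cdot1\cdot2\cdot1\cdot1\cdot2$.

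Where you try to replace the paper's citation for the ordinary part, one step is wrong. A type~\IIno\ move changes the crossing number by $\pm 2$, so crossing parity is changed \emph{only} by type~\Ino. In any case, an invariant that type~\Ino\ also changes cannot show that type~\IIno\ is not generated by the other types. Use instead the number of crossings between the two distinct arcs in the changing disk of the type~\IIno\ move: \Ino\ and \IIIno\ leave it unchanged, and \IIno\ changes it by $\pm 2$.

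Note also what Definition~\ref{generated} gives you. The changing disk of an ordinary move contains no singular crossing, and no move creates one, so moves of types~\IVno\ and~\Vno\ can never be applied there. The necessity of types~\Ino--\IIIno\ therefore reduces to the classical unoriented statement, which is what the paper cites, and for type~\Ino\ the writhe already suffices.

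For the intra-family derivability you flag for type~\Ino, the right mechanism is the Whitney trick. A pair of kinks of opposite sign and opposite rotation is removed by \IIno\ and \IIIno\ moves. So the two orientations of one signed kink, together with \IIno\ and \IIIno, yield the kink of the other sign. Sliding a kink through a crossing never changes its sign, so it cannot do this.

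The orientation-independence checks you plan for $f$ and for the labels are unnecessary. Also, the $a/b$ labels are not orientation-free: reversing one component swaps $a$ and $b$. Instead, orient the diagram once at the start. Every unoriented move in a sequence then becomes an oriented move of the same family. Proposition~\ref{prop:IVdetect} and the $\delta(D)$ argument from the proof of Proposition~\ref{5-non-gene} then apply verbatim, which is exactly how the paper argues.
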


\clearpage

\section{Definitions and notations}\label{def}

\subsection{Definitions}

\begin{defn}\label{def:symbolRM}
    Oriented Reidemeister moves are defined as follows.\\
    \vspace{-0.5cm}
    \begin{figure}[h]
        \centering
        \includegraphics[width=1\linewidth]{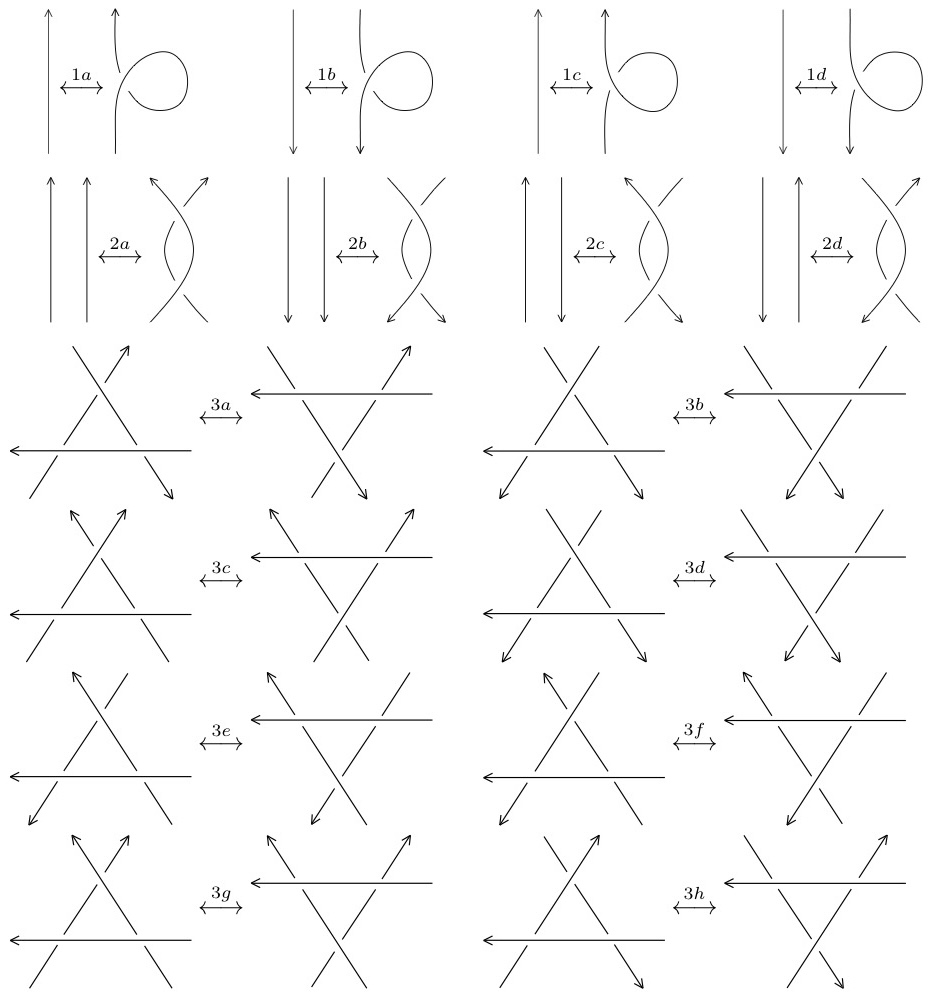}
        \caption{Oriented (ordinary)  Reidemeister moves}       \label{fig:symbolRM}
    \end{figure}
    \vspace{-0.3cm}
\end{defn}
        Type \III~moves are classified into braid type and non-braid type: $\threeA$ and $\threeH$ are non-braid type, while $\threeB, \threeC, \threeD, \threeE, \threeF, \threeG$ are braid type.
    Similarly, among the \tII~moves, $\twoC$ and $\twoD$ are non-braid type, and $\twoA, \twoB$ are braid type.
\begin{defn}
    We denote by $\RI, \RII, \RIII$ the set of all moves of \tI, \tII, \tIII.
    And let $\R \coloneq \RI \cup \RII \cup \RIII$. 
\end{defn}    

\clearpage

\begin{defn}\label{def:symbolSRM}
    Oriented singular Reidemeister moves are defined as follows.\\
\begin{figure}[h]
        \centering
        \includegraphics[width=1\linewidth]{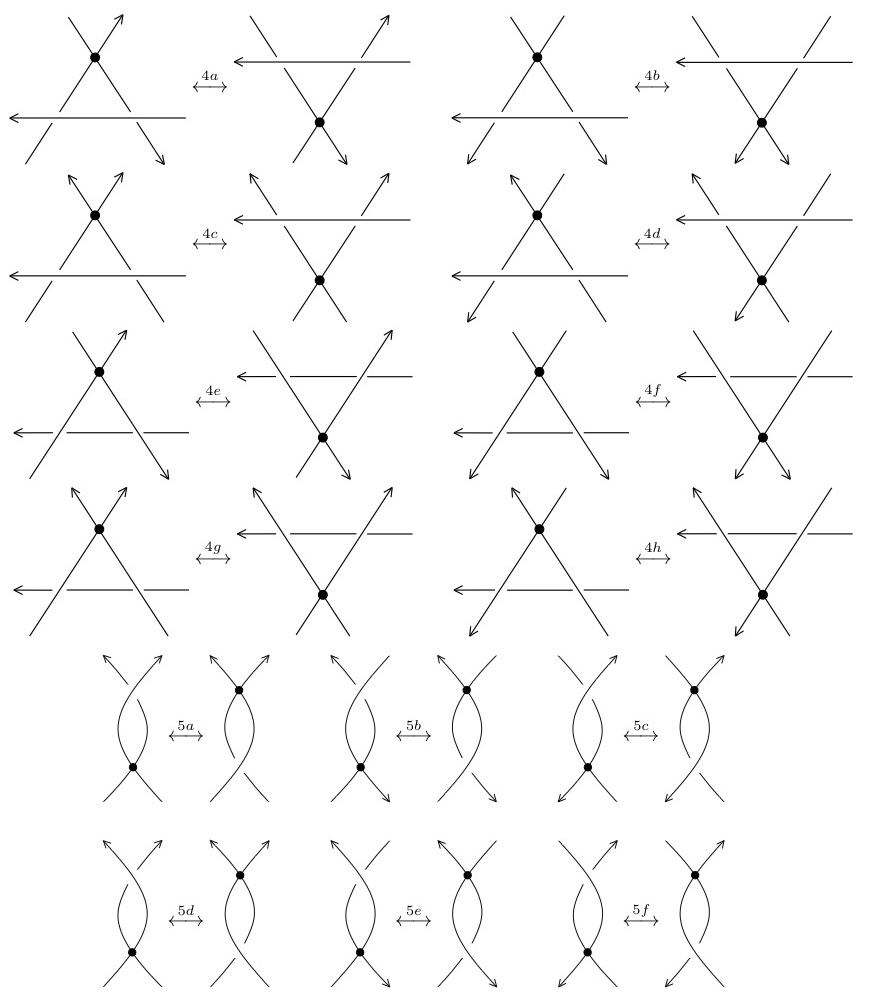}
\caption{Oriented  singular Reidemeister moves}
\label{fig:symbolSRM}
\end{figure}
\end{defn}
\vspace{-0.3cm}

For convenience, we record the correspondence between our labeling
and that of \cite{BEHY2018} (Table~\ref{table:CorrespondingFour}).
\begin{table}[h]
\centering
\caption{Correspondence between our notation and that of \cite{BEHY2018}.}\label{table:CorrespondingFour}
\begin{tabular}{c}
\qquad\qquad\ \ \cite{BEHY2018}: $4a,\ 4d,\ 4c,\ 4b \mid 4e,\ 4h,\ 4g,\ 4f$ \\ \hline
Our notation: $4a,\ 4b,\ 4c,\ 4d \mid 4e,\ 4f,\ 4g,\ 4h$
\end{tabular}
\end{table}
Type \IV~moves are classified into braid type and non-braid type: $\fourA$ and $\fourE$ are non-braid type, while $\fourB, \fourC, \fourD, \fourF, \fourG, \fourH$ are braid type.
    Similarly, among the \tV~moves, $\fiveB$, $\fiveC$, $\fiveE$ and $\fiveF$ are non-braid type, and $\fiveA, \fiveD$ are braid type.
\begin{defn}
    We denote by $\RIV$, $\RV$ the set of all moves of \tIV, \tV.
    And let $\SR \coloneq \RIV \cup \RV$. 
\end{defn}    

\begin{defn}
    Let $\RIVO \coloneq \{ \fourA, \fourB, \fourC, \fourD \}$ and $\RIVU \coloneq \{ \fourE, \fourF, \fourG, \fourH \}$. 
\end{defn}    

\begin{defn}\label{generated}
Let \(m\) be a local move supported in a disk $B$. We say that $m$ is generated by a set  $S$ of local moves if the local change prescribed by $m$ can be achieved by a finite sequence of moves from $S$, all performed within the same disk $B$, 
without affecting 
the diagram outside $B$. 
In this case, we write
\[
m \prec S.
\]
\end{defn}


\begin{defn}
A set $S \subset \R$ is called a \emph{generating set of $\R$} if for any $m \in \R$, $m \prec S$.
We denote by $\GSordi$ the family of all generating sets of $\R$.
\end{defn}

\begin{defn}\label{def:minimal}
    The set $S \in \GSordi$ is a \emph{minimal generating set} 
    if for any $m \in S$, $ S \setminus \{m\} \notin \GSordi $.  
    We denote by $\MGSordi$ the set of all minimal generating sets of $\R$.
\end{defn}


\begin{defn}
A set $S \subset \R \cup \SR$ is called a \emph{generating set of $\R \cup \SR$} if for any $m \in \R \cup \SR$, $m \prec S$.
We denote by $\GSsingu$ the family of all generating sets of $\R \cup \SR$.
\end{defn}

\begin{defn}\label{def:Sminimal}
    The set $S \in \GSsingu$ is a \emph{minimal generating set} 
    if for any $m \in S$, $ S \setminus \{m\} \notin \GSsingu $.  
    We denote by $\MGSsingu$ the set of all minimal generating sets of $\R \cup \SR$.
\end{defn}

\subsection{Independence of type~\III}
We establish several non-generation results for Reidemeister moves,
culminating in the independence of type~$\IIIno$.

\begin{prop}\label{prop:I-II-non-gene}
Ordinary Reidemeister moves of types~$\Ino$ and $\IIno$
cannot be generated by singular Reidemeister moves of types~$\IVno$ and $\Vno$.
\end{prop}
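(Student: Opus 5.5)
Every move of types~$\IVno$ and $\Vno$ involves at least one singular crossing, and each can be applied only where singular crossings are present. The plan is therefore to find quantities that these moves preserve but that some moves of types~$\Ino$ and $\IIno$ change.

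\textbf{Type $\Ino$.} The first invariant is the number of ordinary (classical) crossings, or more precisely its behaviour inside the changing disk.

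A type~$\IVno$ move slides a strand over or under a singular crossing. It replaces two classical crossings by two classical crossings and keeps the single singular crossing.

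A type~$\Vno$ move passes a singular crossing through a classical crossing adjacent to it (a twisted configuration). Both sides contain exactly one classical crossing and one singular crossing.

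Hence moves of types~$\IVno$ and $\Vno$ preserve the number of classical crossings and the number of singular crossings. A type~$\Ino$ move changes the classical crossing number by one, so no move of type~$\Ino$ can be generated.

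\textbf{Type $\IIno$.} A type~$\IIno$ move changes the classical crossing number by two, so the same count disposes of it as well. There is also a more elementary argument that works locally in the disk~$B$, as required by Definition~\ref{generated}. The two sides of an $\Ino$ or $\IIno$ move contain no singular crossings at all. Every move of type~$\IVno$ or $\Vno$ needs a singular crossing inside its changing disk, and none of these moves creates or deletes singular crossings. So a sequence starting from a diagram without singular crossings admits no move of type~$\IVno$ or $\Vno$, only planar isotopy. But the two sides of each $\Ino$ and $\IIno$ move differ as diagrams in $B$: they have different numbers of crossings. This gives the claim.

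\textbf{Main obstacle.} The only real check is that every oriented move $\fourA$--$\fourH$ and $\fiveA$--$\fiveF$ in Figure~\ref{fig:symbolSRM} preserves both crossing counts. I would verify this case by case from the figure, reading off the numbers of classical and singular crossings on each side.
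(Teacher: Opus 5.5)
Your proof is correct and is essentially the paper's. The paper argues in one line that moves of types~$\Ino$ and $\IIno$ change the number of real crossings, while moves of types~$\IVno$ and $\Vno$ preserve it. Your extra local observation is also valid but not needed: inside the singular-crossing-free disk $B$, no move of type~$\IVno$ or $\Vno$ can be applied at all.
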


\begin{proof}
Moves of types~$\Ino$ and $\IIno$ change the number of real crossings,
whereas moves of types~$\IVno$ and $\Vno$ preserve it.
Hence no move of $\tI$ or $\tII$ can be generated by moves of types~$\IVno$ and $\Vno$.
\end{proof}

For a singular link diagram $D$, let
\[
\prs(D)=D_s
\]
denote the diagram obtained from $D$ by smoothing all singular crossings.

\begin{prop}\label{prop:prs}
The map $\prs : D \mapsto D_s$ is well defined up to Reidemeister moves
of types $\Ino$ and $\IIno$.
\end{prop}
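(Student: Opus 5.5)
We need to interpret the claim first. The map $\prs$ replaces each singular crossing by its oriented smoothing. The diagram $D$ itself is a fixed object, so $D_s$ is literally determined by $D$. What ``well defined up to Reidemeister moves of types $\Ino$ and $\IIno$'' must therefore mean is that $\prs$ descends to equivalence classes. Concretely: if $D'$ is obtained from $D$ by a singular Reidemeister move, then $\prs(D')$ is obtained from $\prs(D)$ by a finite sequence of planar isotopies and ordinary moves of types $\Ino$, $\IIno$ (and possibly $\IIIno$, for the moves in $\R$ themselves). The statement is really about moves of types $\IVno$ and $\Vno$: these should become, after smoothing, moves of types $\Ino$ and $\IIno$ only. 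This is what will later feed the independence of type $\IIIno$.

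The proof is local, since smoothing commutes with restriction to the changing disk $B$. So it suffices to check, for each oriented move $m \in \R \cup \SR$, that the two smoothed local pictures inside $B$ differ by moves of types $\Ino$ and $\IIno$ rel $\partial B$.

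\textbf{Moves in $\R$.} These contain no singular crossings, so $\prs$ leaves them unchanged. A move of type $\Ino$ or $\IIno$ is carried to itself.

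\textbf{Moves of type $\IVno$.} A strand passes entirely over (for $\RIVO$) or entirely under (for $\RIVU$) a singular crossing, moving from one side of it to the other. After the oriented smoothing, the singular crossing becomes two disjoint arcs, and the passing strand crosses these two arcs with the same over/under information. Before the move, the strand crosses the two outgoing arcs; after it, the two incoming arcs. Because the strand is uniformly over (or uniformly under), it can be slid across the smoothed region by a type $\IIno$ move followed by a type $\IIno$ inverse on the other arc. Equivalently, each arc is pushed past the strand one at a time, with no triple point arising. This is the main point: the middle crossing of the type $\IIIno$ configuration has disappeared. I would check the braid-type and non-braid-type orientations $\fourA$ through $\fourH$ separately, since the smoothing pattern depends on the orientations.

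\textbf{Moves of type $\Vno$.} A singular crossing sits next to a real crossing, and the move exchanges their order; in its basic form it relates $\sigma_i\tau_i$ to $\tau_i\sigma_i$ up to conjugation by twists. After smoothing, one side becomes a real crossing with a smoothed pair, that is, a single kink or a half-twist turned into a curl. Depending on orientation, the smoothed picture on each side is either isotopic rel boundary or related by a type $\Ino$ move that creates or removes a curl; for non-braid orientations, type $\IIno$ may also be needed.

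\textbf{Main obstacle.} The difficulty is exhaustively and correctly handling all orientation variants, especially $\fiveB$, $\fiveC$, $\fiveE$, $\fiveF$, where the oriented smoothing connects strands differently. For each of these I would draw the two smoothed tangles in $B$ with their boundary points fixed and verify that they are isotopic rel boundary using only type $\Ino$ and $\IIno$. I would first try to reduce by symmetry (mirror image and orientation reversal both commute with $\prs$), so that only one representative per unoriented type must actually be drawn.
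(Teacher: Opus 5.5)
Your proposal is correct and follows the same route as the paper: reduce to moves of types~$\IVno$ and $\Vno$ (moves of $\R$ go to themselves), then observe that smoothing removes the middle crossing of a type~$\IVno$ configuration, leaving a uniformly over/under strand that moves across two disjoint arcs by type~$\IIno$ moves or planar isotopy, while smoothing a type~$\Vno$ configuration leaves at most curls, handled by type~$\Ino$ moves or planar isotopy. One correction to carry into your case check: if the passing strand meets both incoming (resp.\ both outgoing) half-edges of the singular crossing, it crosses each smoothed arc once and the two smoothed pictures are already planar isotopic, so the type~$\IIno$ move followed by its inverse on the other arc is needed only when the strand meets one incoming and one outgoing half-edge, i.e.\ crosses a single smoothed arc twice.
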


\begin{proof}
It suffices to check the behavior under moves of types $\IVno$ and $\Vno$.

For a move of $\tIV$, after smoothing the singular crossing,
the resulting diagrams differ by a Reidemeister move of $\tII$
or by a planar isotopy.

For a move of $\tV$, after smoothing, the resulting diagrams differ
by a Reidemeister move of $\tI$
or by a planar isotopy.

Hence $\prs(D)$ is well defined up to Reidemeister moves of types $\Ino$ and $\IIno$.
\end{proof}

We now prove the independence of $\tIII$ by reducing to the ordinary case.
We use \"{O}stlund's Gauss diagram formula \cite{Ostlund2001}, which is invariant under Reidemeister moves of types~$\Ino$ and $\IIno$, but changes under at least one move of type~$\IIIno$.

\begin{thm}\label{thm:RIII-non-gene}
A move of $\tIII$ cannot be generated by moves of types
$\Ino$, $\IIno$, $\IVno$, and $\Vno$.
\end{thm}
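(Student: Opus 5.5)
The plan is to build an invariant of singular diagrams that is unchanged by moves of types~$\Ino$, $\IIno$, $\IVno$, $\Vno$ but changed by some move of $\tIII$, and then compose $\prs$ with \"{O}stlund's invariant. Let $\Phi$ be \"{O}stlund's Gauss diagram formula from \cite{Ostlund2001}. It is a finite-type-style count of subdiagrams of the Gauss diagram, and it is invariant under Reidemeister moves of types~$\Ino$ and $\IIno$. It changes under at least one oriented move of $\tIII$; call it $m_0$. For a singular diagram $D$, set
\[
\Psi(D) \coloneqq \Phi(\prs(D)).
\]
By Proposition~\ref{prop:prs}, the diagram $\prs(D)$ is well defined up to moves of types~$\Ino$ and $\IIno$, so $\Psi$ is invariant under moves of types~$\IVno$ and $\Vno$. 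To handle types~$\Ino$ and $\IIno$, observe that such a move applied to $D$ avoids the singular crossings. So $\prs$ turns it into the same move on $\prs(D)$, and $\Phi$ is unchanged. Hence $\Psi$ is invariant under every move of types~$\Ino$, $\IIno$, $\IVno$, $\Vno$.

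Now suppose the move $m_0$ were generated by types~$\Ino$, $\IIno$, $\IVno$, $\Vno$ in the sense of Definition~\ref{generated}. Take a diagram $D$ without singular crossings, or indeed any diagram, on which $m_0$ changes $\Phi$. On an ordinary diagram we have $\prs(D) = D$, so $\Psi = \Phi$ there. The generating sequence takes place inside the disk $B$ and passes through singular diagrams along the way, yet $\Psi$ is constant along it. This contradicts $\Phi(D) \neq \Phi(m_0(D))$. To get the statement for every move of $\tIII$, not only $m_0$, I will use the ordinary theory. Each move of $\tIII$ generates every other move of $\tIII$ modulo types~$\Ino$ and $\IIno$, via Polyak's relations. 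So if some move of $\tIII$ were generated by types~$\Ino$, $\IIno$, $\IVno$, $\Vno$, then $m_0$ would be as well, which is impossible. Alternatively, one can use the reflection and mirror variants of $\Phi$ to cover all eight moves.

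The main obstacle is the claim that $\prs$ sends moves of types~$\Ino$ and $\IIno$ to the same moves, and more importantly making Proposition~\ref{prop:prs} compatible with the local formalism. One must check that smoothing, done in an orientation-respecting way, yields a genuine link diagram, possibly with more components. Then $\Phi$ has to be applied in its link or tangle version, or restricted to diagrams where the count makes sense. I would handle this by working with the version of \"{O}stlund's formula for multi-component diagrams, counting only self-crossing configurations on one component. Its invariance under moves of types~$\Ino$ and $\IIno$ persists. One also has to verify that it is still changed by $m_0$ on a suitable one-component example; a closure of the standard local picture of $m_0$ into a knot should suffice.
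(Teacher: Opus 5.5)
Your proposal is correct and is essentially the paper's argument: compose $\prs$ with \"{O}stlund's Gauss diagram formula and use Proposition~\ref{prop:prs} to see that a hypothetical sequence of moves of types I, II, IV and V leaves the formula unchanged. Your extra step, passing from the single detected move $m_0$ to every move of type~III (one type~III move together with types I and II yields all of them), fills in a point the paper's proof leaves implicit.

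It is also worth noticing that moves of types IV and V neither create nor destroy singular crossings. So inside the changing disk $B$ of a type~III move, which contains none, they can never be applied. Every intermediate diagram is therefore ordinary, contrary to your remark that the sequence passes through singular diagrams. This makes your multi-component concern about $\prs$ moot and reduces the statement directly to \"{O}stlund's theorem.
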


\begin{proof}
Assume that a move of type~$\IIIno$ can be generated by moves of types
$\Ino$, $\IIno$, $\IVno$, and $\Vno$.

Applying $\prs$, any sequence of moves of types $\Ino$, $\IIno$, $\IVno$, and $\Vno$
induces a sequence of Reidemeister moves of types $\Ino$ and $\IIno$
on the ordinary diagram $\prs(D)$ by Proposition~\ref{prop:prs}.

On the other hand, \"{O}stlund's Gauss diagram formula is invariant under moves of types $\Ino$ and $\IIno$, and changes under a move of type~$\IIIno$.
It therefore detects a type~$\IIIno$ move that cannot be realized by moves of types $\Ino$ and $\IIno$ alone.

This contradicts the assumption.
Hence a move of type~$\IIIno$ cannot be generated by moves of types
$\Ino$, $\IIno$, $\IVno$, and $\Vno$.
\end{proof}

\begin{cor}
A necessary condition for a set $S$ to be a minimal generating set of $\R \cup \SR$
is that $S$ contains a minimal generating set of $\R$.
\end{cor}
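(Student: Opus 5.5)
The corollary asserts that every minimal generating set $S$ of $\R \cup \SR$ contains a minimal generating set of $\R$. The plan is to show that $S \cap \R$ is itself a generating set of $\R$. Once this holds, we pass to an inclusion-minimal subset of $S \cap \R$ that still generates $\R$. Such a subset exists because $\R$ is finite, so we may delete redundant moves one at a time. It is a minimal generating set of $\R$ in the sense of Definition~\ref{def:minimal}, and it is contained in $S$.

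To show $S \cap \R \in \GSordi$, take an arbitrary $m \in \R$. Since $S$ generates $\R \cup \SR$, there is a finite sequence of moves from $S$, supported in the changing disk of $m$, that realizes $m$. We must show that the moves of types $\Ino$, $\IIno$, $\IIIno$ occurring in this sequence already suffice. Equivalently, we want to show that the moves from $S \cap \SR$ can be eliminated. The natural tool is the smoothing map $\prs$ from Proposition~\ref{prop:prs}, applied to the whole sequence. Both ends of $m$ are ordinary tangle diagrams, so $\prs$ fixes them. Each move of types $\Ino$, $\IIno$, $\IIIno$ from $S$ goes to the same move on the smoothed diagram, with any singular crossings smoothed away. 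Each type $\IVno$ or $\Vno$ move goes to a planar isotopy or to a single Reidemeister move of type $\Ino$ or $\IIno$.

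The main obstacle is this last point. The induced type $\Ino$/$\IIno$ moves need not lie in $S \cap \R$, so a naive projection only shows that $m$ is generated by $(S\cap\R) \cup \RI \cup \RII$.

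To repair this, I would argue by types.

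\begin{itemize}
\item \textbf{Type $\IIIno$.} Theorem~\ref{thm:RIII-non-gene} shows that $S$ must contain a type $\IIIno$ move. More precisely, Östlund's argument applies after projecting, so $S \cap \RIII \neq \emptyset$.
\item \textbf{Types $\Ino$ and $\IIno$.} Proposition~\ref{prop:I-II-non-gene} and the crossing-count argument force $S$ to contain moves of types $\Ino$ and $\IIno$. Then I would check that the particular type $\Ino$/$\IIno$ moves produced by smoothing a $\IVno$ or $\Vno$ move can be rewritten using $S\cap\R$. The key observation for this check is that the type $\Ino$/$\IIno$ moves appearing in the projection are themselves generated by any generating set of $\R$.
\end{itemize}

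This leaves a circularity: we would need $S\cap\R$ to generate $\R$ in order to rewrite those moves. I expect to break it with a finer invariant argument in Polyak's style. Polyak's obstructions show that any generating set of $\R$ must contain specific types of moves, detected by Whitney index and winding-number invariants. These invariants are computed on $\prs(D)$ and are unchanged under the induced planar-isotopy-level moves. One then verifies that $S\cap\R$ meets each of Polyak's required classes and hence generates $\R$. This verification is where I expect the real work to lie.
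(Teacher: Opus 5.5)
\textbf{There is a genuine gap: the proposal never proves that $S\cap\R\in\GSordi$.}

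You correctly notice that projecting by $\prs$ only shows that $m$ is generated by $S\cap\R$ together with arbitrary type I and II moves. You also correctly see that rewriting those induced moves through $S\cap\R$ is circular. But the repair you propose does not break the circularity. Polyak-type invariants (Whitney index, winding numbers, writhe-type counts) are \emph{obstructions}: they can certify that a set of moves fails to generate some move, never that a set does generate. Checking that $S\cap\R$ ``meets each of Polyak's required classes'' therefore gives only necessary conditions. To conclude $S\cap\R\in\GSordi$ you would need a sufficiency criterion, i.e.\ a complete description of the generating subsets of $\R$, which you neither have nor cite. The step you yourself flag as ``where the real work lies'' is left undone, and it is of the wrong kind to succeed.

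\textbf{The missing idea is a simple count, and it makes $\prs$ unnecessary.} Every move in $\R\cup\SR$ preserves the number of singular crossings in its changing disk. Moves of types I--III involve none, and each move of type IV or V has exactly one singular crossing on each side. Moreover, a move of type IV or V can only be applied where a singular crossing is present. Now suppose $m\in\R$ is realized in its disk $B$ by a sequence of moves from $S$. Both ends of the sequence have no singular crossings in $B$, so neither does any intermediate diagram. Hence no move of type IV or V can occur, and the sequence already consists of moves from $S\cap\R$. Thus $S\cap\R\in\GSordi$ for every generating set $S$ of $\R\cup\SR$; minimality of $S$ is not even needed. Your finite reduction to an inclusion-minimal subset then finishes the proof.

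\textbf{Comparison with the paper.} The paper's proof cites Proposition~\ref{prop:I-II-non-gene} and Theorem~\ref{thm:RIII-non-gene} and concludes that $S$ contains a subset generating $\R$. Those results say that singular moves alone cannot produce ordinary moves, and that type III is not generated by the other types. By themselves they do not exclude the singular moves in $S$ assisting the ordinary ones. So your distrust of that naive inference was justified, and the singular-crossing count is exactly what justifies the conclusion.
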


\begin{proof}
Let $S$ be a minimal generating set of $\R \cup \SR$.
Then $S$ generates all moves in $\R$.
By Proposition~\ref{prop:I-II-non-gene} and Theorem~\ref{thm:RIII-non-gene},
none of the moves of types $\Ino$, $\IIno$, and $\IIIno$ can be generated solely by singular moves.
Hence $S$ must contain a subset generating $\R$.
Taking such a subset minimal, we obtain a minimal generating set of $\R$ contained in $S$.
\end{proof}

\section{Main results}\label{sec:MainR}

Let $M \in \MGSordi$ be a minimal generating set of the ordinary Reidemeister moves $\R$.


\begin{thm}\label{main-thm}
A set \(S\) is a minimal generating set of \(\R \cup \SR\)
if and only if there exist
\(M \in \MGSordi\),
\(\fourO \in \R\IVO\),
\(\fourU \in \R\IVU\),
and \(\fiveST \in \R\V\)
such that
\[
S = M \cup \{\fourO,\fourU,\fiveST\}.
\]
\end{thm}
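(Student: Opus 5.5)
The plan is to prove the two directions separately. For sufficiency, the set $S=M\cup\{\fourO,\fourU,\fiveST\}$ must generate $\R\cup\SR$ and be inclusion-minimal. For necessity, every minimal generating set must have this form.

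\emph{Generation.} Since $M$ generates $\R$, all ordinary moves are available, so we may work freely up to types~$\Ino$--$\IIIno$. I would show that within $\RIVO$ any single move generates the other three. The idea is to conjugate by $\tII$ moves: insert a $\twoC$ or $\twoD$ bigon next to the singular crossing, apply the given type~$\IVno$ move, then cancel with another $\tII$ move (possibly after a $\tIII$ move). This is the pattern suggested by the figures $\IVaoIIc$ and $\IVaoIIcIVb$. The same cyclic argument works within $\RIVU$.

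Next, with all of $\RIV$ available, I would show that one type~$\Vno$ move generates the other five. The tool is $\tI$ kinks and rotation: one adds a kink with $\oneB$, applies $\fiveST$, slides with type~$\IVno$ moves, and removes the kink, as in the chain $\VaoIb$, \dots, $\VaoIbIVbisoVeisoIVgIb$. Braid and non-braid versions are exchanged by the $\tII$ trick. These are explicit diagrammatic computations, one per pair (or a cyclic chain of them).

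\emph{Minimality and necessity.} By the Corollary, any minimal generating set $S$ contains some $M\in\MGSordi$. I claim $S$ must contain at least one element of each of $\RIVO$, $\RIVU$ and $\RV$. Three non-generation statements give this:
\begin{enumerate}
\item[(a)] No move of $\RIVO$ is generated by $\R\cup\RIVU\cup\RV$.
\item[(b)] The same holds with $O$ and $U$ interchanged.
\item[(c)] No move of $\RV$ is generated by $\R\cup\RIV$.
\end{enumerate}
Given these, $S\supseteq M\cup\{\fourO,\fourU,\fiveST\}$, and the latter is already generating, so minimality forces equality. The same three statements show that removing any of the three singular moves destroys generation. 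Removing an element of $M$ also destroys it, by Proposition~\ref{prop:I-II-non-gene}, Theorem~\ref{thm:RIII-non-gene} and the minimality of $M$ in the ordinary theory, via the projection $\prs$.

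\emph{Obstructions, the main difficulty.} For (c), type~$\Vno$ changes which strand passes over at an inter-component real crossing adjacent to a singular crossing, while types~$\Ino$--$\IVno$ preserve suitable counts. I would use a linking-type count that sums signs of inter-component real crossings with the singular crossings weighted in a way that type~$\Vno$ alters but types~$\Ino$--$\IVno$ preserve. For (a) and (b), the singular analogue of the $O/U$ distinction is needed. I would build the invariant described in the introduction: project to self-singular diagrams by resolving inter-component singular crossings. This is invariant under types~$\Ino$--$\IIIno$ and $\Vno$. I would then compose it with a Gauss-diagram or writhe-type quantity that tracks whether the strand passing a singular crossing is over or under. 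That quantity should be invariant under $\RIVU$ but change under some $\RIVO$ move, and vice versa.

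I expect verifying that this quantity is insensitive to type~$\Vno$ while detecting $\fourO$ to be the hardest step. It requires checking every oriented type~$\Vno$ move and choosing a test tangle on which a $\RIVO$ move changes the value.
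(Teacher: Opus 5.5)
Your architecture is the paper's. The generation half is Propositions~\ref{4-gene} and~\ref{5-gene}: a single move of $\RIVO$, resp.\ $\RIVU$, generates its family by type~$\IIno$ conjugation, and a single type~$\Vno$ move generates $\RV$ via kinks, type~$\IVno$ slides and type~$\IIno$ moves. Your (a)--(c) are Propositions~\ref{4-non-gene} and~\ref{5-non-gene}.

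The gap is that (a)--(c), which carry all the content of minimality and necessity, are only announced, and your hints do not yet give working invariants.

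For (c), your observation that $\Vno$ changes which component is over at the real crossing is the right one, but a signed count cannot detect it. A type~$\Vno$ move preserves the sign of its real crossing, so any linking-type sum of signs is $\Vno$-invariant. You must count by over-component, or equivalently by the direction in which one component crosses the other. The paper uses $\delta(D)=\#\{a\}-\#\{b\}$ over inter-component real crossings. This is preserved by types~$\Ino$--$\IVno$ (a type~$\IIno$ move creates an $a$/$b$ pair) and jumps by $\pm2$ under a $\Vno$ move between the two components.

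For (a)/(b), two ideas are missing.
First, $\overline{\operatorname{pr}_+}$ disposes only of inter-component $\Vno$ moves. $\Vno$ moves at self-singular crossings survive, and they swap which branch of $s$ is ``over'' when $s$ is read as a positive crossing. The paper neutralizes this by summing separation numbers over base points placed before both branches, $\tau=\tau_o+\tau_u$.
Second, the $O/U$ distinction comes from splitting the inter-component crossings by which component is over ($\{a_+,b_-\}$ versus $\{a_-,b_+\}$). When the second component passes over a self-singular crossing of the first, only crossings with the second component over move past singular letters. Hence $\sum_s t^{\tau(s,a_+)-\tau(s,b_-)}$ is $\IVO$-invariant, yet it separates the diagram of Figure~\ref{fig:IVOindep} from the split diagram.
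When you implement this, count all self-singular letters of the first component between the two occurrences of $c$, as the paper's explicit computation does. If only $s$ is recorded, then $\tau_o(s,c)+\tau_u(s,c)=1$ for every $c$. The quantity then collapses to over-linking numbers, which no type~$\IVno$ move changes.

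Your claim that removing an element of $M$ destroys generation ``via $\prs$'' also fails as stated. $\prs$ turns type~$\IVno$ and~$\Vno$ moves into type~$\IIno$ and~$\Ino$ moves, so a mixed sequence may project onto one that uses exactly the move you removed. The correct and easier reason is that no move of types~$\Ino$--$\Vno$ changes the number of singular crossings. So a sequence realizing an ordinary move inside its disk never meets a singular crossing and uses only moves of $S\cap\R$. Hence $S\cap\R$ must generate $\R$ on its own, and the ordinary minimality of $M$ applies directly. The same observation is what actually justifies the Corollary you invoke for necessity.
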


Theorem~\ref{main-thm} follows from the following four propositions.

\begin{prop}\label{4-gene}
The following two statements hold.
\begin{itemize}
    \item Let $\ast \in \{a,b,c,d\}$ (i.e.\ $\fourAS \in \R \IVO$). Then the set
    \[
    \{\fourAS\}\cup M
    \]
    generates $\R \IVO$.
    \item Let $\ast \in \{e,f,g,h\}$ (i.e.\ $\fourAS \in \R \IVU$). Then the set
    \[
    \{\fourAS\}\cup M
    \]
    generates $\R \IVU$.
\end{itemize}
\end{prop}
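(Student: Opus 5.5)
Since $M$ generates all of $\R$ (Definition~\ref{def:minimal}), we may use every ordinary Reidemeister move of types~$\Ino$--$\IIIno$ freely in the disk. So it suffices to show two things: any single move $\fourAS\in\RIVO$ together with $\R$ generates each of the other three moves of $\RIVO$, and the analogous statement holds for $\RIVU$. We reduce this to a cyclic chain of derivations, for instance
\[
\fourA \prec \{\fourB\}\cup\R,\quad \fourB\prec\{\fourC\}\cup\R,\quad \fourC\prec\{\fourD\}\cup\R,\quad \fourD\prec\{\fourA\}\cup\R,
\]
and the same for $\fourE,\fourF,\fourG,\fourH$. Transitivity of $\prec$ then gives the claim. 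If a four-cycle is awkward, a star pattern also works: each move generates $\fourA$, and $\fourA$ generates each move.

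Each derivation follows the standard Polyak-type trick. Consider the left-hand side of the target move, in which a strand passes over (respectively under, for $\RIVU$) a singular crossing. Apply an ordinary move of $\tII$, of braid or non-braid type as appropriate, to create a canceling pair of crossings. This turns the orientation of the passing strand relative to the singular crossing into the configuration of the available move. Then apply the available type~$\IVno$ move to slide the strand across the singular crossing. Finally undo the auxiliary crossings, using $\tII$ moves, and where needed a $\tI$ curl or a planar isotopy to straighten the strand. The figures \texttt{4a-2c} and \texttt{4a-2c-4b} already in the preamble suggest exactly this: $\fourA$ plus $\twoC$ yields $\fourB$. The over/under character of the passing strand is preserved throughout, because the auxiliary $\tII$ moves involve only the passing strand and one strand of the singular crossing. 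This is why each family closes on itself.

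Concretely, I would first handle the non-braid/braid conversions $\fourA\leftrightarrow\fourB$ and $\fourE\leftrightarrow\fourF$ via non-braid $\tII$ moves $\twoC$ and $\twoD$. I would then handle the braid-to-braid conversions ($\fourB\to\fourC\to\fourD$, and similarly $\fourF\to\fourG\to\fourH$) by reversing the direction of one strand of the singular crossing using a $\tII$ detour around it. The $\RIVU$ case is the mirror image of the $\RIVO$ case, obtained by swapping over and under, so it only needs to be written once.

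The main obstacle is bookkeeping: verifying, for each of the required implications, that the auxiliary $\tII$ crossings can be removed afterward within the same changing disk without requiring a second type~$\IVno$ move of a different orientation. Changing the direction of one branch of the singular crossing may force the passing strand to cross that branch twice. I would first try to route the detour so that both auxiliary crossings lie on the same side of the singular crossing, allowing them to be canceled by a single $\tII$ move after the slide. If that fails for some pair, I would instead route through $\fourA$ (or $\fourE$) using the star pattern.
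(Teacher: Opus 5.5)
Your overall strategy is the paper's: since $M$ generates $\R$, each needed relation is a Polyak-type bigon trick built from type~$\IIno$ moves and one application of the available type~$\IVno$ move. Transitivity of $\prec$ then chains these relations, and $\RIVU$ is the over/under mirror of $\RIVO$.

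The weak point is the specific chain you propose. In the basic trick, the auxiliary bigon lies between the passing strand and one branch. It produces a triangle at the singular crossing in an adjacent quadrant. The move you actually apply therefore differs from the target only in the orientation of one branch relative to the triangle; the passing strand's relative orientation never changes. Consequently the basic trick relates $\fourA$ only to $\fourB$ and $\fourC$ (using $\twoC,\twoD$), and $\fourD$ only to $\fourB$ and $\fourC$ (using $\twoA,\twoB$). These are exactly the relations of Lemma~\ref{Generating4}, forming the square $\fourA$--$\fourB$--$\fourD$--$\fourC$--$\fourA$. Two of your steps, $\fourB\prec\{\fourC\}\cup\R$ and $\fourD\prec\{\fourA\}\cup\R$, are diagonals of this square. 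Flipping one branch turns $\fourB$ into $\fourA$ or $\fourD$, never into $\fourC$, so your ``braid-to-braid'' recipe fails there. Your fallback star centred at $\fourA$ also needs the diagonal $\fourA$--$\fourD$ as a single step.

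The fix is cheap: route along the square as the paper does. From $\fourA$, obtain $\fourB$ and $\fourC$ via $\{\fourA,\twoC,\twoD\}$, then $\fourD$ via $\{\fourB,\twoA,\twoB\}$. For the other starting moves, use the reverse relations in Lemma~\ref{Generating4}; transitivity does the rest.

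If you do want the diagonals in one step, your ``detour'' must go around the singular crossing:
\begin{enumerate}
\item push a finger of the passing strand across both branches with two type~$\IIno$ moves;
\item apply the given move to the triangle in the opposite quadrant, which the strand now traverses backwards;
\item remove the two leftover bigons with two more type~$\IIno$ moves.
\end{enumerate}
This relates $\fourA$ with $\fourD$ and $\fourB$ with $\fourC$ directly.

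Either way, the proposition's real content is this explicit list of diagrammatic relations. Your proposal should commit to a specific, checkable list rather than leaving the bookkeeping open.
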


\begin{prop}\label{4-non-gene}
The following two statements hold.
\begin{itemize}
    \item The set $\R \cup \R \IVO \cup \R \V$ never generates $\R \IVU$.
    \item The set $\R \cup \R \IVU \cup \R \V$ never generates $\R \IVO$.
\end{itemize}
\end{prop}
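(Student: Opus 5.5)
We need an invariant that is unchanged by all moves of $\R$, $\RIVO$ and $\RV$, but is changed by at least one move of $\RIVU$ (and symmetrically with O and U swapped). Since a generation $m \prec S$ takes place inside a fixed disk $B$ with the exterior unchanged, it suffices to find one completion of each side of a chosen move in $\RIVU$ to a closed singular link diagram for which the two closures have different invariant values. Following the introduction, I would build the invariant from the projection to self-singular links. For a singular link diagram $D$, resolve every \emph{inter-component} singular crossing. Whether a singular crossing is inter-component is a global property, but it is determined by the closure outside $B$ together with the arcs inside $B$, so it is preserved under local moves.

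The natural resolution is to replace each inter-component singular crossing by a real crossing with a fixed convention, for instance the strand of the lower-indexed component passes over. A move of $\tIV$ involving an inter-component singular crossing then becomes a move of $\tIII$ or a planar isotopy. A move of $\tV$, twisting a singular crossing between two strands of different components, becomes a move of $\tII$ (or $\tI$). The subtle point is that a $\tV$ move swaps which strand lies on which side. This is exactly why the introduction warns that $\tV$ changes the labels of inter-component crossings, so the resolution convention must be chosen so that $\tV$ is still respected.

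The invariant I would use is a linking-number-type count between components, read off after the projection. Concretely, for two components $K_1$ and $K_2$, I would take the signed count of real crossings where $K_1$ passes over $K_2$, or a refinement that tracks the position of the singular crossings relative to a third strand. The key observation is what distinguishes the two families. In $\RIVO$ the third strand passes \emph{over} both strands at the singular crossing, and in $\RIVU$ it passes \emph{under}. So I would choose the three-strand configuration for the test diagram with all three strands on distinct components $K_1, K_2, K_3$, the singular crossing between $K_1$ and $K_2$, and $K_3$ as the moving strand. For the O-family, a move slides $K_3$ over the singular crossing. After resolving with $K_1$ over $K_2$, this is an honest Reidemeister $\tIII$ move in which $K_3$ is top. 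For the U-family, $K_3$ slides under, which after resolution is a $\tIII$ move with $K_3$ bottom. Both are legitimate, so the plain resolution is too symmetric. I would therefore instead resolve by a convention depending on the family being tested: to show $\RIVU$ is not generated, resolve each inter-component singular crossing by declaring it to lie \emph{above} all other strands. This can be realised as a smoothing-plus-height convention, or as a Gauss-diagram invariant counting triples. Under $\RIVO$ the strand $K_3$ passes over a crossing that is declared topmost, which still matches the type~$\IVO$ picture up to an isotopy of heights. Under $\RIVU$ the slide is consistent. I would then compute a Gauss-diagram triple invariant, for example the count of configurations ``$K_3$ crosses $K_1$ before $K_2$ along $K_3$'', and show that it is invariant under $\R$, $\RIVO$ and $\RV$ while a move in $\RIVU$ changes it by $\pm1$.

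The main obstacle is making this precise: finding the right quantity so that all of $\R$, all of $\RIVO$ and all six $\tV$ moves preserve it while one move of $\RIVU$ does not. My first attempt would be the Gauss-diagram formula counting, for each inter-component singular chord between $K_1$ and $K_2$, the signed real crossings of $K_3$ with $K_1$ and with $K_2$ where $K_3$ is over, lying on a chosen arc. A $\IVO$ move transports such a crossing across the singular point and changes nothing, whereas a $\IVU$ move changes an under-crossing pair and is detected after symmetrization. I would check invariance of this formula under $\tII$/$\tIII$ by the standard Polyak--Viro arguments, and under $\tV$ by noting that it does not involve crossings of $K_1$ with $K_2$. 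The second bullet of the proposition follows by mirror symmetry, reversing all over/under information, which exchanges $\RIVO$ and $\RIVU$ and preserves $\R$ and $\RV$.
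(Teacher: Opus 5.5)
Your proposal has a genuine gap: it never produces the invariant. Your last paragraph concedes that finding it is ``the main obstacle'', and the one concrete candidate is analysed backwards.

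\emph{The candidate detects the wrong family.} A quantity that only sees real crossings where $K_3$ is over cannot be changed by any move of $\RIVU$. Such a move only relocates the two crossings where $K_3$ passes \emph{under} the strands of the singular crossing, call it $x$. Conversely, if your chosen arcs end at $x$, it is exactly a $\IVO$ move that changes the count, because it carries two counted crossings across $x$. If the arcs do not end at $x$, neither family changes it. Symmetrizing over and under treats the two families identically, so it cannot create the asymmetry you need.

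\emph{The earlier resolution ideas do not help either.}
\begin{itemize}
\item With the convention ``lower-indexed component over'', the two resolved sides of a braid-type type~V move read $\sigma_1^{\epsilon}\sigma_1^{\eta}$ and $\sigma_1^{\eta}\sigma_1^{-\epsilon}$ in two-strand braid notation. The resolved crossing changes sign because the strands have switched sides. This changes the linking number of $K_1$ and $K_2$, so the two sides are not related by ordinary moves. That is why the paper's $\overline{\operatorname{pr}_+}$ resolves by a fixed sign.
\item Since your test move sits at an inter-component singular crossing, any resolution of $x$ turns both $\IVO$ and $\IVU$ into type~III moves, as you noticed.
\item ``Declaring $x$ topmost'' does not define a diagram. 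Read literally, it would make $\IVU$, not $\IVO$, the consistent slide, which is the opposite of what you want.
\end{itemize}

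\emph{What the paper does.} It places the test move at self-singular crossings of the first component (Figure~\ref{fig:IVOindep}). It uses $\overline{\operatorname{pr}_+}$ only to neutralize type~V moves at inter-component singular crossings. It separates the families by splitting inter-component real crossings into the classes $\{a_+,b_-\}$ and $\{a_-,b_+\}$, i.e.\ by which component is over, weighted by their separation from the singular crossings. A type~IV move only relocates crossings of the class fixed by whether the passing strand goes over or under.

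\emph{How to repair your set-up.} Your three-component configuration works along the same lines once over and under are swapped. Choose the closure so that $x$ is its only singular crossing, and fix base points $p_1,p_2$ on $K_1,K_2$ outside the changing disk. Let $\Phi$ be the signed number of crossings where $K_3$ passes under $K_i$ lying on the arc of $K_i$ from $p_i$ to $x$, summed over $i=1,2$. No move creates or destroys singular crossings, so $x$ stays the only one throughout. Then check each move:
\begin{itemize}
\item type~II pairs share the over-strand and have opposite signs, so they cancel;
\item type~I and type~III moves keep every counted crossing on the same side of $x$ and $p_i$;
\item type~V moves at $x$ involve only a $K_1$--$K_2$ crossing;
\item type~IV moves whose passing strand lies on $K_1$ or $K_2$ involve no crossing with $K_3$;
\item $\IVO$ moves relocate only over-crossings of $K_3$, so they leave $\Phi$ unchanged.
\end{itemize}
A $\IVU$ move carries two counted crossings across $x$ and changes $\Phi$ by $\pm2$. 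In the braid case the two crossings have equal signs and are transported in the same direction; in the non-braid case they have opposite signs and are transported in opposite directions. Either way the contributions add. With this verified, your mirror argument gives the second bullet. The resulting route needs no projection, at the cost of three components instead of two.
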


\begin{prop}\label{5-gene}
For any $\fourO \in \R \IVO$, $\fourU \in \R \IVU$, and $\fiveST \in \R \V$, the set
\[
\{\fourO,\fourU,\fiveST\}\cup M
\]
generates $\R \V$.
\end{prop}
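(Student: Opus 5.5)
We prove Proposition~\ref{5-gene} diagrammatically, in the spirit of the derivations encoded by the macros \verb|\VaoIb|, \verb|\VaoIbIVb|, $\dots$ and \verb|\VauL|, $\dots$ in the preamble. By Proposition~\ref{4-gene}, $\{\fourO\}\cup M$ generates all of $\RIVO$ and $\{\fourU\}\cup M$ generates all of $\RIVU$. Since $M$ generates $\R$, the set $\{\fourO,\fourU,\fiveST\}\cup M$ therefore generates $\R\cup\RIV\cup\{\fiveST\}$. It suffices to show that $\R\cup\RIV\cup\{\fiveST\}$ generates every move of $\RV$, for each of the six choices of $\fiveST\in\{\fiveA,\dots,\fiveF\}$. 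We are free to use every type~$\IVno$ and every ordinary move.

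\textbf{Step 1 (turning a type~$\Vno$ move over).} We first show that a type~$\Vno$ move and the move obtained from it by rotating the singular crossing's kink to the other side are mutually generated. Start from one side of $\fiveST$. Add a small kink with a type~$\Ino$ move on the strand next to the singular crossing. Slide the singular crossing through the kink by a type~$\IVno$ move of the appropriate family: $\RIVO$ if the strand passes over, $\RIVU$ if it passes under. Apply $\fiveST$ there. Slide back with another type~$\IVno$ move, and remove the kink by type~$\Ino$. This is the pattern $5a\to 1b\to 4b\to\text{iso}\to 5e\to\text{iso}\to 4g\to 1b$ suggested by the figure macros. Inspecting the resulting crossing configurations gives a chain relating $\fiveA$ with $\fiveE$, and analogously $\fiveB$ with $\fiveD$, $\fiveC$ with $\fiveF$, and so on.

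\textbf{Step 2 (reversing orientation / changing the direction of the twist).} Next, we relate braid-type and non-braid-type type~$\Vno$ moves. We lengthen the diagram (\verb|\VauL|), introduce a cancelling pair of crossings with the braid-type moves $\twoA,\twoB$, apply a type~$\Vno$ move of the other kind (e.g.\ $\fiveD$), and then clean up with type~$\IIIno$ and type~$\IVno$ moves. In the spirit of Polyak's argument for $\RI$, this converts a braid-type $\Vno$ move into a non-braid-type one and vice versa.

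\textbf{Step 3 (closing the graph).} We combine Steps 1 and 2 into a graph on the six vertices $\fiveA,\dots,\fiveF$, with an edge wherever one move is generated by the other together with $\R\cup\RIV$. Generation is transitive and every edge is symmetric, since each derivation can be reversed. It is therefore enough to check that this graph is connected.

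The main obstacle is Step 2 and the bookkeeping in Step 1. Each type~$\IVno$ slide must use a move from the correct family, $\RIVO$ or $\RIVU$, according to whether the passing strand is over or under. This is exactly why both $\fourO$ and $\fourU$ are needed, and it is consistent with Proposition~\ref{4-non-gene}. One must also verify that the orientations produced after the kink manipulations really land on the claimed move labels of Figure~\ref{fig:symbolSRM}. I would first carry out the $\fiveA$ case explicitly in both its over and under variants, since these are the two derivations the preamble figures record. I would then obtain the remaining cases by the symmetries of the plane (mirror reflection combined with orientation reversal). These symmetries permute the labels of $\RV$ and exchange $\RIVO$ with $\RIVU$ in a controlled way, which leaves the hypothesis "one move from each family" invariant.
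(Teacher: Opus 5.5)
Your architecture matches the paper's. You use Proposition~\ref{4-gene} to make every ordinary and every type~$\IVno$ move available. You then link the six type~$\Vno$ moves by kink-plus-type-$\IVno$ derivations (Lemma~\ref{Generating5-1}) and type-$\IIno$ derivations (Lemma~\ref{Generating5-2}), and close up.

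The gap is the claim in Step~3 that every edge is symmetric ``since each derivation can be reversed''. Reversing a derivation of $m$ from $\{n\}\cup T$ only produces a derivation of $m$ again, run from its other side with the same moves used backwards. It never produces $n$ from $\{m\}\cup T$. To obtain $n$ you must create, inside the disk of $n$, a configuration containing a side of $m$, and that is a new construction. The proposition is asserted for every choice of $\fiveST$, so you need the \emph{directed} generation graph to be strongly connected; undirected connectivity does not give this. Step~1 also asserts ``mutually generated'' but describes only one direction.

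This matters concretely in your plan. The two derivations you intend to compute explicitly, the ones recorded in the preamble figures, express the braid-type move $\fiveA$ through $\fiveE$ and through $\fiveD$. Every symmetry you invoke (planar reflection, orientation reversal, mirror image) preserves braid/non-braid type. So transporting these derivations only ever yields derivations \emph{of} $\fiveA$ or $\fiveD$, and if ``the $\fiveA$ case'' means $\fiveST=\fiveA$, symmetry carries it only to $\fiveST=\fiveD$. Either way, nothing produces $\fiveB,\fiveC,\fiveE,\fiveF$ from a braid-type $\fiveST$.

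The paper supplies exactly these converse directions as separate constructions: $\fiveE,\fiveF\prec\{\fiveA,\fourA,\fourE,\oneC\}$ and $\fiveB,\fiveC\prec\{\fiveD,\fourA,\fourE,\oneA\}$. Their auxiliary moves differ from those in $\fiveA\prec\{\fiveE,\fourB,\fourG,\oneB\}$, so they are not reversals. Together with $\fiveD\prec\{\fiveA,\twoA,\twoB\}$, one gets $\fiveA\to\fiveE,\fiveF,\fiveD$ and then $\fiveD\to\fiveB,\fiveC$. The listed relations give directed paths from every starting move.

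You also have the roles of your two steps partly swapped. The type-$\IIno$ derivations relate moves of the same kind ($\fiveA$--$\fiveD$, $\fiveB$--$\fiveF$, $\fiveC$--$\fiveE$); your own example, $\fiveA$ from $\fiveD$, is braid-to-braid. It is the kink derivations that cross between $\{\fiveA,\fiveD\}$ and $\{\fiveB,\fiveC,\fiveE,\fiveF\}$, via the pairs $\fiveA$--$\fiveE$, $\fiveA$--$\fiveF$, $\fiveB$--$\fiveD$, $\fiveC$--$\fiveD$. So the braid-to-non-braid kink constructions are precisely what you must build by hand.
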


\begin{prop}\label{5-non-gene}
The set $\R \cup \R \IV$ never generates $\R \V$.
\end{prop}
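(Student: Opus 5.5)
We prove Proposition~\ref{5-non-gene} by building an invariant of singular link diagrams, with values in some set $X$, that is unchanged by every move of $\R \cup \RIV$ but takes different values on the two sides of at least one type~$\Vno$ move. If a type~$\Vno$ move $m$ satisfied $m \prec \R \cup \RIV$, the two sides of $m$, completed to a diagram in the same way outside the changing disk, would be related by moves of types~$\Ino$--$\IVno$. They would then have the same invariant value, a contradiction. It is enough to find one such $m$, since $\R\cup\RIV$ must generate all of $\RV$.

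\textbf{Choice of invariant.} Following the abstract's hint that "the labels of inter-component crossings detect that moves of type~$\Vno$ cannot be generated", we work with links of at least two components. Each singular crossing gets a label: the unordered (or ordered, via orientation) pair of components meeting there, and whether it is a self-crossing or an inter-component crossing. In a type~$\Vno$ move, a singular crossing slides past a real crossing. Choose a configuration in which the strands at the real crossing belong to components $K_1$ and $K_2$, and the singular crossing involves $K_1$ and $K_3$, or a self-crossing of $K_1$.

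The cleanest candidate is a linking-type count. For each ordered pair of components $(K_i,K_j)$, let $\mathrm{lk}_{ij}(D)$ be half the signed sum of real crossings between $K_i$ and $K_j$. Let $\sigma_{ij}(D)$ be the number of singular crossings between $K_i$ and $K_j$. Then:
\begin{itemize}
\item Types~$\Ino$--$\IIIno$ preserve both quantities.
\item Types~$\IVno$ only move a singular crossing through a strand, so they preserve both quantities as well.
\end{itemize}

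\textbf{Why naive counts fail, and the refinement.} In a type~$\Vno$ move the singular crossing passes through a real crossing, and the two strands swap their over/under relation relative to the singular point. This changes the sign bookkeeping. More precisely, a type~$\Vno$ move typically replaces a real crossing of sign $\varepsilon$ adjacent to the singular crossing by one of the opposite arrangement. So the signs of crossings between the two strands of the singular point can change while the totals stay fixed. For this reason plain counts probably do not suffice.

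The refinement is to take the two strands at a singular crossing $c$ and split the diagram there in both ways, giving the two resolutions $D_+^c$ and $D_-^c$. We then record an invariant of each resolution, such as the multiset of pairwise linking numbers of $D_\pm^c$. More plausibly, we record the smoothing $\prs$ from Proposition~\ref{prop:prs}, combined with the self/inter label. Smoothing an inter-component singular crossing merges $K_i$ and $K_j$. A type~$\Vno$ move near it then changes which crossings are self-crossings of the merged component and which are inter-component crossings. A linking number between the merged component and a third component $K_3$, computed after smoothing, can therefore jump.

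\textbf{Verification and main obstacle.} Concretely, define $\Phi(D)$ as the multiset, over inter-component singular crossings $c$, of the linking matrix of the link obtained by smoothing $c$ and resolving all other singular crossings positively. We then check the following.
\begin{itemize}
\item Invariance under $\R$, which is routine.
\item Invariance under $\RIV$. Here the singular crossing passes over or under an entire strand, so the resolutions differ by type~$\IIno$ moves.
\item Non-invariance under one explicit type~$\Vno$ move on a three-component example, such as $\fiveA$ with strands from $K_1$, $K_2$ and a nearby $K_3$.
\end{itemize}
The main obstacle is the last check: the smoothing of $c$ under a type~$\Vno$ move changes the diagram only by a kink (type~$\Ino$), as noted in the proof of Proposition~\ref{prop:prs}. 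Linking numbers may therefore be preserved after all. If so, the detecting information must come from the positive and negative resolutions, not the smoothing. In that case we replace $\Phi$ by the pair of linking matrices of $D_+^c$ and $D_-^c$, which a type~$\Vno$ move interchanges (together with a crossing change) but which type~$\IVno$ moves do not. The point is that type~$\IVno$ moves pass a strand past $c$ entirely, while type~$\Vno$ moves twist the two strands at $c$. We would first test this unordered-versus-ordered resolution pair on the simplest two-component example with one singular and one real inter-component crossing.
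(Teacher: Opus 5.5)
Your framework is the paper's: find an invariant preserved by $\R\cup\RIV$ but changed by some move of $\RV$, and evaluate it on a two-component closure of that move. But there is a genuine gap. Every concrete invariant you propose is also preserved by type~$\Vno$, and the proposal ends before exhibiting one that detects anything.

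In a type~$\Vno$ move, the real crossing $X$ and the singular crossing $T$ are the two vertices of a bigon between the same two strands. The move trades $XT$ for $TX$ while \emph{keeping the writhe of $X$}. Your remark that the sign of the real crossing may change is incorrect, and this is exactly why $\mathrm{lk}_{ij}$ and $\sigma_{ij}$ fail. If $T$ is replaced by its positive or negative resolution $P$, both sides become two successive half-twists of the same pair of strands. Such half-twists commute, so $XP$ and $PX$ are isotopic. The smoothing changes only by a kink, as you observed yourself. Hence $\Phi$ and the pair $(D^c_+,D^c_-)$ are unchanged by type~$\Vno$, and the claim that a type~$\Vno$ move interchanges $D^c_+$ and $D^c_-$ is false. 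This compatibility is precisely what makes sign resolutions of singular links well defined.

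What a type~$\Vno$ move does change is \emph{which component passes over} at $X$: the writhe stays the same but the over-strand switches. Equivalently, the orientation of the pair $(t_1,t_2)$ of tangent vectors of $K_1$ and $K_2$ at $X$ flips, since the two vertices of a bigon have opposite intersection signs. The paper uses exactly this through Turaev's labels $a,b$ of inter-component ordinary crossings, setting $\delta(D)=\#\{a\}-\#\{b\}$. This quantity is unchanged by:
type~$\Ino$, since no inter-component crossings are involved;
type~$\IIno$, since an $a$ and a $b$ are created or removed together;
types~$\IIIno$ and $\IVno$, since labels are preserved.
A type~$\Vno$ move at an inter-component singular crossing instead turns an $a$ into a $b$ or vice versa, changing $\delta$ by $\pm2$.

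The closest repair of your idea is an \emph{asymmetric} resolution. Resolve every inter-component singular crossing so that $K_1$ passes over, then take the linking number. This is invariant under types~$\Ino$--$\IVno$, since a type~$\IVno$ move becomes a type~$\IIIno$ move. On the two sides of $\fiveA$, however, it produces two crossings of opposite writhe on one side and two of equal writhe on the other, so it jumps by $\pm1$. Up to a term that no move changes, it equals $\pm\delta/2$. Two components suffice, so your three-component setup is unnecessary.
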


\begin{cor}
For every set $X$ appearing in Table~\ref{ListSMGS}, the union
\[
M \cup X
\]
is a minimal generating set of singular Reidemeister moves $\R \cup \SR$,
and Table~\ref{ListSMGS} gives the complete list of all such sets $X$.
\end{cor}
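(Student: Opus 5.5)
The corollary has two halves: each $M \cup X$ with $X$ from Table~\ref{ListSMGS} is a minimal generating set, and every minimal generating set has this form. Both will come from Propositions~\ref{4-gene}--\ref{5-non-gene}, together with the preceding corollary that any minimal generating set of $\R \cup \SR$ contains some $M \in \MGSordi$. By Table~\ref{table:CorrespondingFour} and the definitions of $\RIVO$ and $\RIVU$, the table lists exactly the triples $\{\fiveST, \fourO, \fourU\}$ with one entry from each of $\RV$, $\RIVO$, $\RIVU$. So it suffices to prove Theorem~\ref{main-thm}, with the count $4\times 4\times 6=96$ read off from the cardinalities.

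\emph{Generation.} Let $S = M\cup\{\fourO,\fourU,\fiveST\}$. Since $M$ generates $\R$, Proposition~\ref{4-gene} applied twice shows that $S$ generates $\RIVO$ and $\RIVU$, hence all of $\RIV$. Proposition~\ref{5-gene} then shows that $S$ generates $\RV$. Generation is transitive: a move in a disk $B$ replaced by a sequence of moves, each of which is in turn replaced inside its own sub-disk of $B$, stays within $B$. Hence $S\in\GSsingu$.

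\emph{Minimality.} Remove one element at a time.
\begin{itemize}
\item Removing $\fiveST$ leaves a subset of $\R\cup\RIV$, which does not generate $\RV$ by Proposition~\ref{5-non-gene}.
\item Removing $\fourU$ leaves a subset of $\R\cup\RIVO\cup\RV$, which does not generate $\RIVU$ by Proposition~\ref{4-non-gene}. Removing $\fourO$ is symmetric.
\item Removing some $m\in M$: the moves of $\R$ that remain generate a set, and by minimality of $M$ this set does not contain all of $\R$. We must show the singular moves cannot supply the missing ordinary move. Proposition~\ref{prop:I-II-non-gene} and Theorem~\ref{thm:RIII-non-gene} only rule out generation by types~$\IVno$ and $\Vno$ alone, or by types $\Ino,\IIno,\IVno,\Vno$.
\end{itemize}
The natural tool for the mixed case is $\prs$. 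A sequence using $M\setminus\{m\}$ together with singular moves maps under $\prs$ to a sequence of moves from $M\setminus\{m\}$ together with type~$\Ino$/$\IIno$ moves introduced by $\prs$. But we need exactly the moves of $M\setminus\{m\}$, so this projection argument leaks whenever $m$ is of type~$\Ino$ or $\IIno$. The alternative is to smooth singular crossings in a way that introduces no new ordinary moves. A cleaner route: a move $m\in\R$ has no singular crossings in either diagram. So I will argue that any generating sequence can be pushed to one avoiding singular crossings, by replacing each singular crossing with a fixed choice (say, positive real crossing) consistently. Under this replacement, $\tIV$ and $\tV$ moves become moves of types $\II$, $\III$, or $\I$. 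This also leaks.

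This last case is the main obstacle. I expect the honest approach to be the following. Creating or destroying a singular crossing is impossible with moves in $\R\cup\SR$, since types~$\IVno$ and $\Vno$ preserve the number of singular crossings. So a sequence starting from an ordinary diagram stays among ordinary diagrams and uses only ordinary moves. Hence $M\setminus\{m\}$ together with singular moves generates exactly what $M\setminus\{m\}$ generates, which is not all of $\R$.

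\emph{Completeness.} Let $S$ be minimal. By the preceding corollary, $S\supseteq M$ for some $M\in\MGSordi$. By Proposition~\ref{4-non-gene}, $S$ meets both $\RIVO$ and $\RIVU$. By Proposition~\ref{5-non-gene}, $S$ meets $\RV$. So $S$ contains some $M\cup\{\fourO,\fourU,\fiveST\}$, which already generates by the first part. Minimality then forces equality, and $M$ is unique as the ordinary part of $S$.
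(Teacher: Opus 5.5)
Your proposal is correct. Its skeleton matches the paper's: the corollary is Theorem~\ref{main-thm} read against Table~\ref{ListSMGS}. Generation comes from Propositions~\ref{4-gene} and~\ref{5-gene}, and the need for one move from each of $\RIVO$, $\RIVU$, $\RV$ comes from Propositions~\ref{4-non-gene} and~\ref{5-non-gene}.

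Where you genuinely differ is in the ordinary part. The paper relies on the $\prs$/\"{O}stlund argument (Proposition~\ref{prop:I-II-non-gene}, Theorem~\ref{thm:RIII-non-gene}, and the corollary at the end of Section~\ref{def}). Those results only exclude generating an ordinary move from particular combinations of types. So they do not justify the step ``$S$ must contain a subset generating $\R$''. Nor do they show that $(M\setminus\{m\})\cup\{\fourO,\fourU,\fiveST\}$ fails to generate $\R$ for $m\in M$, a case the paper never treats explicitly.

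Your observation fills this. Every move of $\R\cup\SR$ preserves the number of singular crossings. Hence a sequence realizing a move of $\R$ inside its disk never meets a singular crossing and can only use moves of $S\cap\R$. So an ordinary move is generated by $S$ if and only if it is generated by $S\cap\R$. This disposes of the removal of $m\in M$ directly from the minimality of $M$. It also reproves the corollary at the end of Section~\ref{def}: since $\R$ is finite, $S\cap\R\in\GSordi$ contains an element of $\MGSordi$. You can therefore make your argument self-contained instead of citing that corollary, and the projection $\prs$ is not needed for this statement at all.

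Two small cleanups. You can drop the exploratory remarks about $\prs$ and the positive resolution ``leaking''. The appeal to Table~\ref{table:CorrespondingFour} is superfluous, since $\RIVO$ and $\RIVU$ are defined directly in the paper's own labeling.
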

We also establish the independence of type~$\III$
from types~$\I$, $\II$, $\IV$, and $\V$
(Theorem~\ref{thm:RIII-non-gene}),
which plays a key role in the minimality arguments.

The proofs are organized as follows.
Proposition~\ref{4-gene} is established in Subsection~\ref{subsec:gene-4},
Proposition~\ref{4-non-gene} follows from Propositions~\ref{prop:inv-no-IV}
and~\ref{prop:IVdetect},
Proposition~\ref{5-gene} is proved in Subsection~\ref{subsec:gene-5},
and Proposition~\ref{5-non-gene} is proved in Subsection~\ref{sec:Detect-5}.

\section{Proof}\label{Proof}

\subsection{Generating \tIV}\label{subsec:gene-4}
    We use the notation \(m \prec S\) introduced in Definition~\ref{generated}

\begin{lem}\label{Generating4}
    $\tIV$ moves can be generated as follows.
  \begin{align}
    \fourA &\prec \{ \fourB, \twoC, \twoD \}, \{ \fourC, \twoC, \twoD \},\\
    \fourB &\prec \{ \fourA, \twoC, \twoD \}, \{ \fourD, \twoA, \twoB \},\\
    \fourC &\prec \{ \fourA, \twoC, \twoD \}, \{ \fourD, \twoA, \twoB \},\\
    \fourD &\prec \{ \fourB, \twoA, \twoB \}, \{ \fourC, \twoA, \twoB \},\\
    \fourE &\prec \{ \fourF, \twoC, \twoD \}, \{ \fourG, \twoC, \twoD \},\\
    \fourF &\prec \{ \fourE, \twoC, \twoD \}, \{ \fourH, \twoA, \twoB \},\\
    \fourG &\prec \{ \fourE, \twoC, \twoD \}, \{ \fourH, \twoA, \twoB \},\\
    \fourH &\prec \{ \fourF, \twoA, \twoB \}, \{ \fourG, \twoC, \twoD \}.
  \end{align}
\end{lem}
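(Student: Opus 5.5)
We prove each of the sixteen relations by an explicit diagrammatic sequence inside the changing disk $B$ of the target move, in the style of Polyak's derivations for ordinary moves. The template is to \emph{rotate} a singular crossing past a strand. Start from the left-hand side of the target type~$\IVno$ move, which has one singular crossing and a strand passing (over or under) near it. First apply a type~$\IIno$ move to create an auxiliary bigon that changes the local direction in which the passing strand meets the two singular branches. Then apply the given type~$\IVno$ move to slide the strand across the singular crossing. Finally cancel the bigon by the inverse type~$\IIno$ move. The result is the right-hand side of the target move.

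The choice of type~$\IIno$ moves is forced by orientations. If the derived move and the given move differ by reversing one branch relative to the passing strand (non-braid versus braid), the auxiliary bigon is formed by two oppositely oriented strands, so we use $\twoC,\twoD$. If they differ within the braid type ($\fourB,\fourC \leftrightarrow \fourD$, $\fourF,\fourG\leftrightarrow\fourH$), the strands in the bigon are coherently oriented, so we use $\twoA,\twoB$.

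We never need a type~$\IIIno$ move, because the singular crossing is the only crossing being passed. The over/under character of the passing strand is preserved throughout. This is why each line of the lemma stays within $\RIVO$ or within $\RIVU$.

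The steps, in order, are as follows.
\begin{enumerate}
\item Treat the $\RIVO$ family (relations (1)--(4)): derive $\fourA$ from $\fourB$ or $\fourC$ with $\twoC,\twoD$, and conversely. Then derive $\fourB,\fourC$ from $\fourD$ and $\fourD$ from $\fourB$ or $\fourC$ with $\twoA,\twoB$.
\item Observe that relations (5)--(8) for $\RIVU$ follow by the same pictures with the passing strand switched from over to under. This is the mirror symmetry exchanging $\RIVO$ and $\RIVU$, and it preserves the letters of the type~$\IIno$ moves used.
\item Note that each relation yields the move in both directions, since the inverse of a move sequence uses the same set of moves.
\end{enumerate}

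The main obstacle is bookkeeping rather than ideas. For each pair we must check that the orientation of the bigon created really matches $\twoC/\twoD$ or $\twoA/\twoB$ as claimed, and that the intermediate type~$\IVno$ move is exactly the listed one and not a differently oriented variant. The relation $\fourH\prec\{\fourG,\twoC,\twoD\}$ is the suspicious case: it breaks the pattern of the $\RIVO$ analogue, whose mirror would be $\fourD \prec\{\fourC,\twoA,\twoB\}$. I would first draw that case carefully, tracking the orientations of all three strands, to confirm which pair of type~$\IIno$ moves actually appears. If the drawing shows $\twoA,\twoB$ instead, the mirror argument of step~2 already covers it.
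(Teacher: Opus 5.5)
Your template is the one the paper uses. Its only drawn case obtains $\fourA$ by a $\twoC$ creating a bigon between the passing strand and one singular branch, then $\fourB$ on the new triangle through the singular crossing, then a $\twoD$ removing the leftover bigon; the other fifteen relations are asserted to be ``the same''. One precision about your description: the bigon removed at the end is not the one you created, since that one stops being a bigon after the type~$\IVno$ move. The bigon you remove is formed by one original crossing and one new crossing, with the passing strand on the other side of the branch. So the last step is the inverse of the \emph{other} move of the pair, which is why both $\twoC,\twoD$ (or both $\twoA,\twoB$) appear, rather than one move and its inverse.

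Your orientation rule is correct, and it does more than make the $\fourH$ line look suspicious. In one run of the template, both type~$\IIno$ moves involve the passing strand and the same branch, so they are both parallel or both antiparallel. Passing to the adjacent wedge reverses that branch relative to the triangle, and a direct check shows the bigons are parallel exactly when both triangles are of braid type. Adjacency of wedges is symmetric, so whenever the template gives $X\prec\{Y,\dots\}$ it also gives $Y\prec\{X,\dots\}$ with the same pair of type~$\IIno$ moves. Hence the lemma's own relation $\fourG\prec\{\fourH,\twoA,\twoB\}$ yields $\fourH\prec\{\fourG,\twoA,\twoB\}$, matching the $\RIVO$ analogue $\fourD\prec\{\fourC,\twoA,\twoB\}$. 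The stated entry $\fourH\prec\{\fourG,\twoC,\twoD\}$ cannot come out of this template, and the paper gives no other argument for it, so it is almost certainly a misprint. As written, neither your argument nor the paper's ``the same applies'' proves it. Your last sentence should therefore not say the mirror argument ``covers'' that case; say instead that you prove the corrected relation. Nothing downstream is affected, because in the proof of Proposition~\ref{4-gene} the set $M$ supplies all of $\twoA$--$\twoD$.
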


\begin{proof}
    For the first case, $\fourA$ can be generated by the following sequence.\\ \\
    \vspace{-4mm}
    \begin{figure}[h]
        \centering
        \includegraphics[width=1\linewidth]{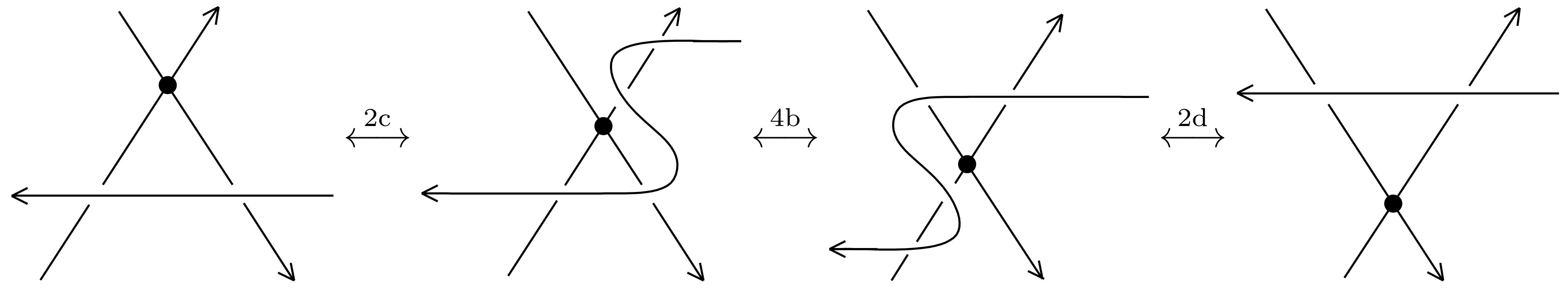}
    \end{figure}
    \vspace{-3mm}
    
    The same applies to the other $\tIV$ moves.
\end{proof}

\vspace{-3mm}
\begin{proof}[Proof of Proposition~\ref{4-gene}]
    For example, consider the case $\ast = a$.
    (The cases $\ast=b,c,d$ and $\ast=e,f, g, h$ can be treated in the same way.) 
    Since $M$ is a minimal generating set of ordinary Reidemeister moves, the moves $\twoA, \twoB, \twoC$, and $\twoD$ can all be generated using only elements of $M$. 
    Here, by the lemma~\ref{Generating4},  the moves $\fourB$ and $\fourC$ can first be generated.
    \begin{align*}
        \fourB, \fourC \prec \{ \fourA, \twoC, \twoD \}
    \end{align*}
    Using the resulting move $\fourB$ together with a type II move, we can then generate $\fourD$.
    \begin{align*}
        \fourD  \prec \{ \fourB, \twoA, \twoB \}
    \end{align*}
    Therefore, all elements of $\RIVO$ can be generated.
\end{proof}

\subsection{Generating \tV}\label{subsec:gene-5}
\begin{lem}\label{Generating5-1}
    $\tV$ moves can be generated as follows.
  \begin{align}
    \fiveA &\prec \{ \fiveE, \fourB, \fourG, \oneB \}, \{ \fiveF, \fourB, \fourG, \oneA \},\\
    \fiveB &\prec \{ \fiveD, \fourA, \fourE, \oneA \}, \\
    \fiveC &\prec \{ \fiveD, \fourA, \fourE, \oneA \}, \\
    \fiveD &\prec \{ \fiveB, \fourC, \fourF, \oneD \}, \{ \fiveC, \fourC, \fourF, \oneC \}.\\
    \fiveE &\prec \{ \fiveA, \fourA, \fourE, \oneC \},  \\
    \fiveF &\prec \{ \fiveA, \fourA, \fourE, \oneC \}. 
  \end{align}
\end{lem}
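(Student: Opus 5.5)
The plan is to prove each relation in Lemma~\ref{Generating5-1} by writing down an explicit finite sequence of moves inside the changing disk of the target move, following the template already used for $\fiveA$ in the figure sequences (the macros $\VaoIb$, $\VaoIbIVb$, \dots, $\VaoIbIVbisoVeisoIVgIb$ suggest the route). For $\fiveA \prec \{\fiveE,\fourB,\fourG,\oneB\}$ we start from the left-hand side of $\fiveA$: a singular crossing with an adjacent kink.

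The sequence has five steps. First, a $\oneB$ move adds a small curl to the strand carrying the kink, so that the strand locally has the shape required by the non-braid move $\fiveE$. Second, the singular crossing is pushed across one crossing of this curl by the type~$\IV$ move $\fourB$, which belongs to $\RIVO$. After a planar isotopy, the kink and the singular crossing form exactly the configuration on one side of $\fiveE$. Third, we apply $\fiveE$. Fourth, after another isotopy, we push the singular crossing back with $\fourG \in \RIVU$. Fifth, we cancel the auxiliary curl with $\oneB^{-1}$. The result should be the right-hand side of $\fiveA$, and every step stays inside the original disk.

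The second decomposition of $\fiveA$ is handled the same way, using $\fiveF$ and $\oneA$ in place of $\fiveE$ and $\oneB$. The remaining relations follow this pattern with the roles reversed. For the non-braid targets $\fiveB$, $\fiveC$, $\fiveE$, $\fiveF$, we start from a braid-type move $\fiveD$ or $\fiveA$. A $\oneA$ or $\oneC$ curl converts the orientation pattern, and $\fourA$ and $\fourE$ carry the singular point around the curl. For $\fiveD$, we use $\fiveB$ or $\fiveC$ together with $\fourC$, $\fourF$ and $\oneD$ or $\oneC$. Orientation symmetries, namely reversing all strands or reflecting the disk, should pair these cases, so we can draw half of them and derive the rest by symmetry.

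The main obstacle is bookkeeping. At each step we must check that the orientations of the strands and the over/under information around the singular crossing really match the specific oriented moves listed. In particular, one type~$\IV$ move must come from $\RIVO$ (the strand passes over) and the other from $\RIVU$ (the strand passes under). This is forced because the curl's strand passes the singular crossing once over and once under. So the first thing we would verify, for each target, is which of the four $\tI$ orientations gives a curl whose two crossings with the singular point are precisely the pair of listed $\IV$-moves. If a mismatch appears, we would insert $\twoA$–$\twoD$ moves, which are generated by $M$ anyway.
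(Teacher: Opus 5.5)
Your plan is essentially the paper's proof: the paper draws only the case $\fiveA \prec \{\fiveE,\fourB,\fourG,\oneB\}$, using the same sequence you reconstruct (add a $\oneB$ curl, apply $\fourB$, isotope, apply $\fiveE$, isotope, apply $\fourG$, remove the curl by $\oneB$), and it dismisses the remaining relations as analogous, as you do. One caution: your fallback of inserting $\twoA$--$\twoD$ moves would prove something weaker than the lemma as stated, because its listed sets contain no type~II moves, although that weaker form would still suffice for Proposition~\ref{5-gene}.
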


\begin{proof}
    For the first case, $\fiveA$ can be generated by the following sequence.\\ \\
    \vspace{-8mm}
    \begin{figure}[h]
        \centering
        \includegraphics[width=1\linewidth]{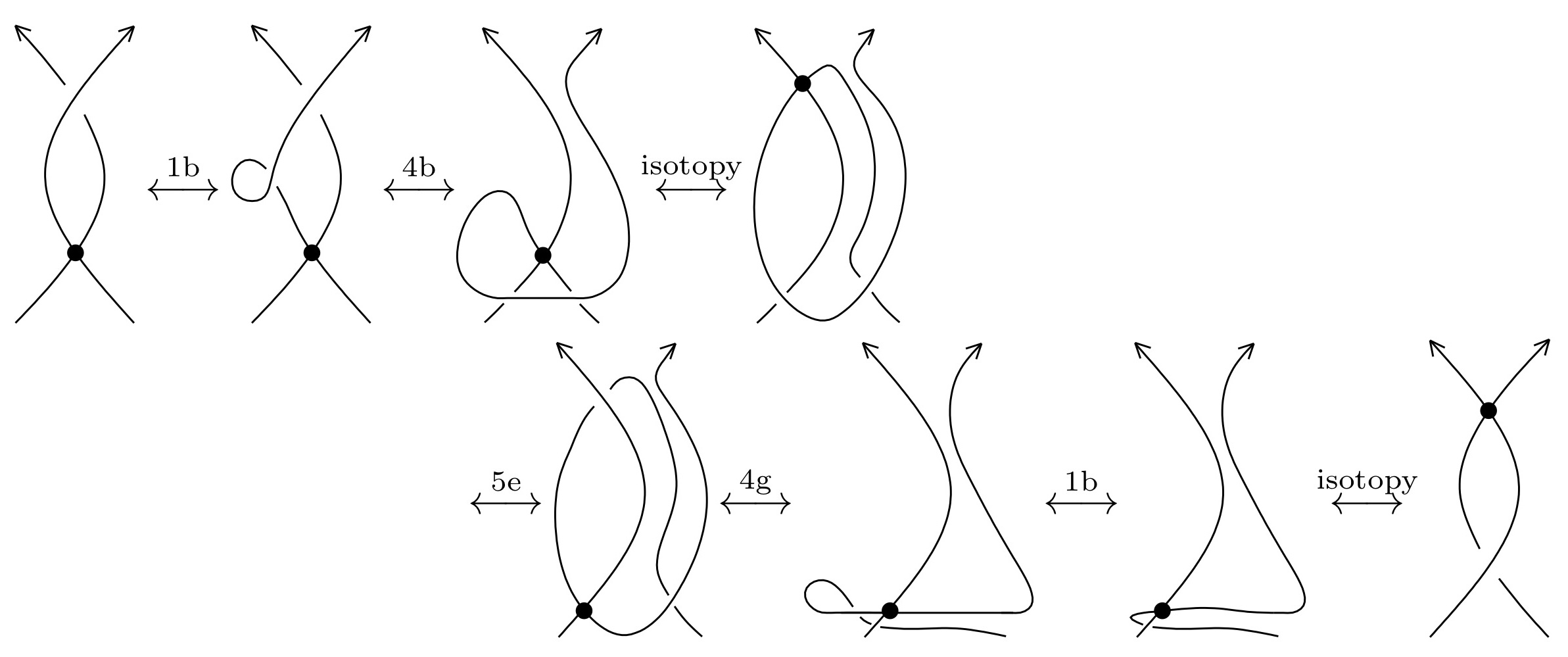}
    \end{figure}
    \vspace{-5mm}
    \\
    
    The same applies to the other $\tV$ moves.
    
\end{proof}

\begin{lem}\label{Generating5-2}
    $\tV$ moves can be generated as follows.
  \begin{align}
    \fiveA &\prec \{ \fiveD, \twoA, \twoB \},\\
    \fiveB &\prec \{ \fiveF, \twoC \},\\
    \fiveC &\prec \{ \fiveE, \twoD \},\\
    \fiveD &\prec \{ \fiveA, \twoA, \twoB \},\\
    \fiveE &\prec \{ \fiveC, \twoC \},\\
    \fiveF &\prec \{ \fiveB, \twoD \}.
  \end{align}
\end{lem}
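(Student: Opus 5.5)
Lemma~\ref{Generating5-2} is proved in the same way as Lemmas~\ref{Generating4} and~\ref{Generating5-1}: for each of the six relations we exhibit an explicit sequence of diagrams inside the changing disk that realizes the left-hand move using only the listed moves. The lemma splits into two families: the braid-type pair $\{\fiveA,\fiveD\}$, and the four non-braid moves $\fiveB,\fiveC,\fiveE,\fiveF$.

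\textbf{Braid-type pair.} I expect $\fiveA$ and $\fiveD$ to be related by reversing the orientation of one of the two strands. For $\fiveA \prec \{\fiveD,\twoA,\twoB\}$ the construction has three steps.
\begin{enumerate}
\item Starting from the left-hand side of $\fiveA$, apply a braid-type move $\twoA$ or $\twoB$ to introduce a cancelling pair of real crossings near the singular crossing. This makes the relevant strand pass ``backwards'' locally.
\item The configuration around the singular crossing and the adjacent kink now looks like a $\fiveD$ configuration, so we apply $\fiveD$.
\item Remove the auxiliary bigon with the inverse type~$\IIno$ move.
\end{enumerate}
Since the move is braid-like, the auxiliary bigon consists of parallel strands, which is why braid-type $\twoA$ and $\twoB$ are needed. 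The relation $\fiveD \prec \{\fiveA,\twoA,\twoB\}$ then follows by the symmetric (inverse) sequence.

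\textbf{Non-braid moves.} The statement pairs $\fiveB$ with $\fiveF$ and $\fiveC$ with $\fiveE$, each needing only a single non-braid type~$\IIno$ move. This suggests that each pair differs by which side of the singular crossing the type~$\Ino$ loop sits on, i.e.\ by pushing the loop through one of the strands.
\begin{enumerate}
\item For $\fiveB \prec \{\fiveF,\twoC\}$, apply $\twoC$ to slide the appropriate arc across, producing a configuration where $\fiveF$ applies.
\item Apply $\fiveF$.
\item Undo the $\twoC$ with its inverse. Because $\twoC$ is used in both directions, one move suffices.
\end{enumerate}
The remaining relations $\fiveC \prec \{\fiveE,\twoD\}$, $\fiveE \prec \{\fiveC,\twoC\}$ and $\fiveF \prec \{\fiveB,\twoD\}$ are obtained by the same picture after mirror reflection or global orientation reversal. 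These symmetries exchange $\twoC \leftrightarrow \twoD$ and permute the type~$\Vno$ labels accordingly.

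\textbf{Main obstacle.} The hard part is bookkeeping rather than conceptual. In each sequence we must check three things:
\begin{itemize}
\item the orientations and crossing signs of the auxiliary type~$\IIno$ move match exactly the listed one ($\twoC$ versus $\twoD$, $\twoA$ versus $\twoB$);
\item every intermediate diagram stays inside the disk $B$;
\item the final diagram is planar isotopic to the target, as Definition~\ref{generated} requires.
\end{itemize}
I would first draw the $\fiveB$ case carefully to pin down which strand the bigon is formed on, and then derive the other cases by symmetry.
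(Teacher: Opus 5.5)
Your skeleton (create a bigon by a type~II move, apply the companion type~V move, remove a bigon by a type~II move, then obtain the other relations by symmetry) is the same as the paper's. The paper draws one such sequence for $\fiveA\prec\{\fiveD,\twoA,\twoB\}$ and declares the rest analogous. However, the mechanism you describe does not work, so the proposal does not yet contain a construction.

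\textbf{The errors.} First, $\fiveA$ and $\fiveD$ are both braid-type, so they are not related by reversing one strand; that would give a non-braid move. They are mirror images, differing only in the over/under of the real crossing. Second, there is no kink or type~I loop anywhere: a type~V configuration is just a real crossing and a singular crossing of the same two strands bounding a bigon. Third, your last step, removing ``the auxiliary bigon'', cannot happen: after the type~V move, the two new crossings are separated by the singular crossing.

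\textbf{The missing idea: conjugation by a new crossing.} Write $\fiveA$ as $\sigma\tau\leftrightarrow\tau\sigma$ (real crossing $\sigma$, singular crossing $\tau$, read along the strands) and $\fiveD$ as $\sigma^{-1}\tau\leftrightarrow\tau\sigma^{-1}$. The sequence is
\[
\sigma\tau\;\longrightarrow\;\sigma\tau\sigma^{-1}\sigma\;\longrightarrow\;\sigma\sigma^{-1}\tau\sigma\;\longrightarrow\;\tau\sigma .
\]
Here $\fiveD$ is applied to $\tau$ and the adjacent new crossing. The original crossing then cancels against that new crossing, and the real crossing that survives is the other new one. The created bigon $\sigma^{-1}\sigma$ and the cancelled bigon $\sigma\sigma^{-1}$ are the two different braid-type type~II moves: in one the strand entering on the left is over at both crossings, in the other it is under. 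That is exactly why both $\twoA$ and $\twoB$ are needed. For $\fiveD\prec\{\fiveA,\twoA,\twoB\}$, use the crossing-changed (mirror) sequence, not the inverse one; the inverse only runs $\fiveA$ backwards.

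\textbf{The non-braid relations.} These are the same conjugation with one strand reversed. In a twist region of anti-parallel strands, consecutive bigons have opposite orientations. So the type~V move you apply, on the bigon between $\tau$ and the first new crossing, has the opposite real crossing ($\sigma$ versus $\sigma^{-1}$) and the opposite bigon orientation from the target; it is the planar mirror image of the target. The created and cancelled bigons are two steps apart, hence equally oriented. Therefore a single anti-parallel type~II move ($\twoC$ or $\twoD$), used once in each direction, suffices. No ``pushing a loop through a strand'' is involved. With these explicit sequences in hand, your symmetry reduction to one braid case and one non-braid case goes through.
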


\begin{proof}
    For the first case, $\fiveA$ can be generated by the following sequence.
    \\ \\
    \begin{figure}[h]
        \centering
        \includegraphics[width=0.8\linewidth]{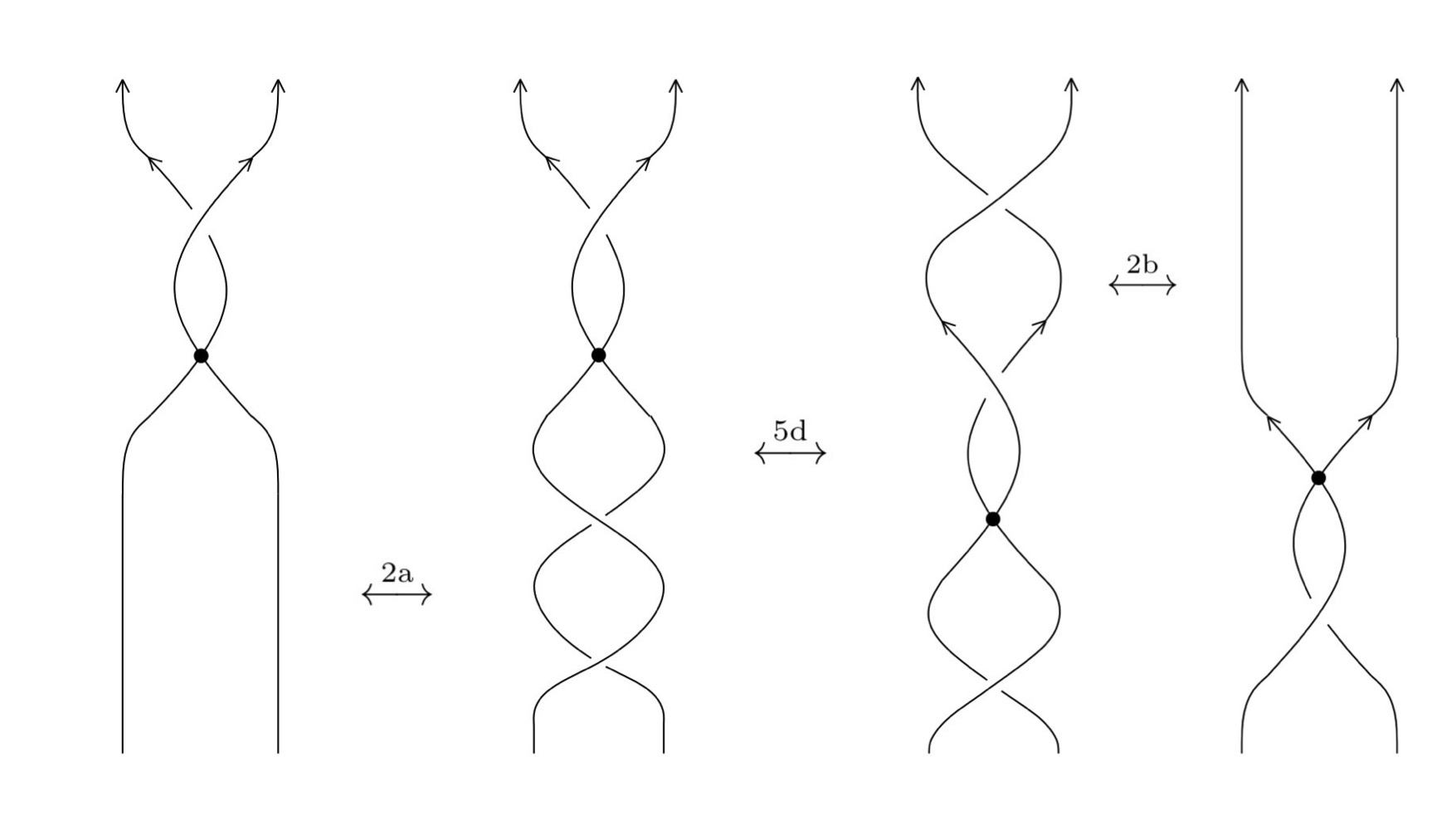}
    \end{figure}
    \vspace{-5mm}
    
    The same applies to the other $\tV$ moves.\\
\end{proof}

\begin{proof}[Proof of Proposition~\ref{5-gene}]
    For example, consider the case $\star = a$.
    (The cases $\ast=b, c, d, e, f$ can be treated in the same way.) 
    It follows from Proposition~\ref{4-gene} that, since $M$ is a minimal generating set of ordinary Reidemeister moves, any $\fourAS$, any $\fourU$, and the elements of $M$ together generate all the other moves of \tI, \II, \III, and \IV.
    Here, by the lemma~\ref{Generating5-1},  the moves $\fiveE$ and $\fiveF$ can first be generated.
    \begin{align*}
        \fiveE, \fiveF \prec \{ \fiveA, \fourA, \fourE, \oneC \}
    \end{align*}
    Moreover, by Lemma~\ref{Generating5-2}, the move $\fiveD$ can also be generated.
    \begin{align*}
        \fiveD  \prec \{ \fiveA, \twoA, \twoB \}
    \end{align*}
    It follows from Lemma~\ref{Generating5-1} that $\fiveB$ and $\fiveC$ can be generated from $\fiveD$.
    \begin{align*}
        \fiveB, \fiveC \prec \{ \fiveD, \fourA, \fourE, \oneA \}
    \end{align*}
    Therefore, all elements of $\RV$ can be generated.
\end{proof}

\subsection{An invariant detecting singular Reidemeister moves}\label{sec:invariant}
In this section, we introduce an invariant of based,
ordered two-component singular links, which plays a crucial role
in distinguishing singular Reidemeister moves of type~\IV. 
The main new ingredient is the invariant $f$, constructed from
Gauss phrases associated with the projection $\overline{\operatorname{pr}_+}$.
\subsubsection{Projection to self-singular links}\label{sec:projection}
A singular link is called a \emph{self-singular link} (resp.\ \emph{inter-component-singular link}) if every singular crossing is a self-singular crossing (resp.\ inter-component crossing).

Let $\mathcal{L}$ denote the set of singular links and $\mathcal{SL}$ the set of self-singular links.
Similarly, let $\mathcal{LD}$ denote the set of singular link diagrams and $\mathcal{SLD}$ the set of self-singular link diagrams.

\begin{prop}\label{prop:SL}
Define a map
\[
\overline{\operatorname{pr}_+} : \mathcal{LD} \longrightarrow \mathcal{SLD}
\]
by replacing every inter-component singular crossing in a diagram
with a positive crossing.

Then this map induces a well-defined map
\[
\operatorname{pr}_+ : \mathcal{L} \longrightarrow \mathcal{SL},
\]
that is, a projection from singular links to self-singular links.
\end{prop}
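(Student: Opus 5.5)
We have to show that if two singular link diagrams $D$ and $D'$ represent the same singular link, then $\overline{\operatorname{pr}_+}(D)$ and $\overline{\operatorname{pr}_+}(D')$ represent the same self-singular link. Since $D$ and $D'$ differ by a finite sequence of planar isotopies and moves from $\R\cup\SR$, it is enough to check a single move: after applying $\overline{\operatorname{pr}_+}$ to both sides of any such move, the two results must differ by singular Reidemeister moves through self-singular diagrams. Two preliminary observations make this local. First, $\overline{\operatorname{pr}_+}$ only changes crossings inside the changing disk that are inter-component singular crossings, and it does the same thing outside the disk on both sides. Second, every move preserves the components and their orientations, so whether a strand pair is self or inter-component is the same before and after the move. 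Planar isotopy clearly commutes with $\overline{\operatorname{pr}_+}$.

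We then treat the moves by type.

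For types $\Ino$, $\IIno$ and $\IIIno$, the changing disk contains no singular crossings. The image of the move under $\overline{\operatorname{pr}_+}$ is therefore the identical ordinary move, with the outside diagram possibly altered in the same way on both sides.

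For type $\IVno$, a singular crossing passes over or under a strand. Let the two strands of the singular crossing be $s_1,s_2$ and the third strand be $t$.
\begin{itemize}
\item If $s_1$ and $s_2$ belong to the same component, the singular crossing survives and the image is again a move of type $\IVno$.
\item If they belong to different components, the singular crossing becomes a positive real crossing. Its sign is determined by the orientations, which are the same on both sides of the move, so it is the same positive crossing on both sides. The image is then a move of type $\IIIno$: the third strand $t$ lies entirely over (for $\RIVO$) or entirely under (for $\RIVU$) both other strands. This is a legitimate $\IIIno$ move with $t$ as the top or bottom strand, whatever the crossing of $s_1$ and $s_2$ is.
\end{itemize}

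For type $\Vno$, two strands with one real crossing and one singular crossing exchange positions. Let the two strands be $s_1,s_2$.
\begin{itemize}
\item If $s_1$ and $s_2$ lie on the same component, the singular crossing is kept and the image is again a move of type $\Vno$.
\item If they lie on different components, the singular crossing becomes a positive crossing. We must check that both sides become isotopic pairs of crossings, and this is the step I expect to be the main obstacle.
\end{itemize}

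In the inter-component case of type $\Vno$, a move of type $\Vno$ has a twist of the two strands. I would use the fact that the real crossing and the singular crossing sit in the same configuration on both sides, with only their order along the strands exchanged. After replacing the singular crossing by a positive crossing, each side is a two-crossing tangle on the same two strands. The two sides are equal either through a flype/rotation of the twisted pair, realized by planar isotopy together with $\Ino$ and $\IIno$ moves, or, when the real crossing is also positive, through the trivial commutation of two like crossings in the 2-braid, which is just an isotopy of the twist. I would verify this directly on one representative, say $\fiveA$, from Figure~\ref{fig:symbolSRM}. The other type $\Vno$ moves then follow because Lemma~\ref{Generating5-1} and Lemma~\ref{Generating5-2} express each of them through $\fiveA$ and moves of types $\Ino$, $\IIno$ and $\IVno$, and those moves have already been handled.

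Finally, the image diagrams are self-singular by construction, and every move used in the images is either ordinary or involves only a self-singular crossing. So the images are equivalent within $\mathcal{SLD}$, and $\operatorname{pr}_+$ is well defined on $\mathcal{L}$.
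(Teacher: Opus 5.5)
Your proposal is correct and follows the same move-by-move check as the paper: types I--III are untouched, and a type IV move either stays type IV or becomes a type III move with the third strand on top or bottom; you also supply the type V verification that the paper only asserts. For that step, note that a type V move keeps the sign of its real crossing, so after the replacement the two sides are either the identical twist of two like crossings or two bigons of opposite crossings, each removable by one type II move; this works verbatim for every move $\fiveA$--$\fiveF$, so neither the flype language nor the reduction to $\fiveA$ via Lemmas~\ref{Generating5-1} and~\ref{Generating5-2} is needed (the reduction is valid, but it makes your proof depend on those lemmas, which the paper proves only by figures).
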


\begin{proof}
For type~\I,\II,\III, this $\overline{\operatorname{pr}_+}$ does not change $\I, \II, \III$, beacuse the singular crossing in the diagram are not replaced by positive crossing.   

For type $\IV$, this $\overline{\operatorname{pr}_+}$ does not change $\IV$ if a singular point is self-singular; $\overline{\operatorname{pr}_+}$ implies type $\III$ if a singular point is inter-component-singular. 

Finally, it suffices to check that the operation is compatible with the singular
Reidemeister move of type~V.

Let $D_0$ and $D_1$ be diagrams related by a type~V move.
By writing the diagrams explicitly before and after the move,
one verifies that $\overline{\operatorname{pr}_+}(D_0)$ and
$\overline{\operatorname{pr}_+}(D_1)$ represent the same singular link.
Hence the induced map $\operatorname{pr}_+$ is well defined.
\end{proof}

\begin{rem}
Let $\mathcal{IL}$ be the set of inter-component-singular links. 

Similarly, one may define another projection
\[
\operatorname{pr}' : \mathcal{L} \longrightarrow \mathcal{IL}
\]
by replacing all self-singular crossings simultaneously
with positive crossings.

This produces a different category of links.
Moreover, the two projections commute:
\[
\operatorname{pr}' \circ \operatorname{pr}_+
=
\operatorname{pr}_+ \circ \operatorname{pr}' .
\]
\end{rem}

\subsubsection{Setup and notation}\label{sec:labelAB}

We consider based, ordered two-component singular link diagrams $D$.
Throughout this section, all ordinary Reidemeister moves, types~\I--\III, 
are assumed to be freely available. 

Following Turaev \cite{Turaev2006}, we classify ordinary crossings between distinct components
into four types $a_+,a_-,b_+,b_-$ as follows: 
\[
\ap,\quad \am,\quad \bp,\quad \bm.  
\]
Here, in the diagrams of inter-component crossings,
the numerals $1$ and $2$ attached to the strands
indicate only the component to which each strand belongs
(the first or the second component, respectively).
These labels are independent of the over/under information. On the other hand, the sign $\pm$ indicates the local writhe.   
The crossing type $a_+, a_-, b_+, b_-$ is determined
by combining the component labels with the over/under structure.

\subsubsection{Encoding word}\label{subsec:encoding}

Let $D$ be an ordered, oriented two-component singular link diagram.
We refer to the components as the \emph{first} and the \emph{second} components.
Let $S$ denote the set of self-singular crossings on the first component.

For each $s\in S$, we define two Gauss phrases
$w_s^{\,o}$ and $w_s^{\,u}$.
A \emph{Gauss phrase} is a word in which every letter appears exactly twice,
together with separators $|$ indicating the decomposition into components.

\medskip
\noindent
\textbf{Step 1 (choice of base point).}
Fix $s\in S$.
Consider a small disk containing only the singular crossing $s$,
and regard $s$ as a positive crossing.

\begin{itemize}
\item For $w_s^{\,o}$, choose the base point on the first component
slightly before $s$ along the \emph{over-path}.
\item For $w_s^{\,u}$, choose the base point on the first component
slightly before $s$ along the \emph{under-path}.
\end{itemize}

On the second component, choose an arbitrary base point%
\footnote{The choice does not affect Propositions~\ref{prop:inv-no-IV} and \ref{prop:IVdetect}.}.
Each component is traversed from the chosen base point
following its orientation.

\medskip
\noindent
\textbf{Step 2 (construction of the Gauss phrases).}
Traverse each component starting from the chosen base point
following the orientation.
During this traversal record only the following events:

\begin{itemize}
\item[(S)] the two branches of the chosen self-singular crossing $s$,
\item[(I)] all inter-component ordinary crossings.
\end{itemize}

Each inter-component ordinary crossing is assigned a distinct label
$c_1,c_2,\dots,c_n$.
All other crossings are ignored.

Reading the recorded symbols along the traversal produces
the Gauss phrases
\[
w_s^{\,o},\; w_s^{\,u}
\]
in the alphabet
\[
\{\,s\,\}\cup\{\,c_1,c_2,\dots,c_n\,\},
\]
where the symbol $|$ separates the two components.

\medskip
\noindent
\textbf{Step 3 (separation numbers).}
Let $c$ be an inter-component ordinary crossing of $D$,
with type $a_+$, $a_-$, $b_+$, or $b_-$.

For each choice of a base point (on the over- or the under-path),
define
\[
\tau_o(s,c), \quad \tau_u(s,c)
\]
to be the number of occurrences of $s$
lying between the two occurrences of $c$
in the linear order of $w_s^{\,o}$ and $w_s^{\,u}$.
Let 
\begin{equation}\label{eq:sigmaBase}
\tau(s,c):=\tau_o(s,c)+\tau_u(s,c).   
\end{equation}

For a fixed type, define
\[
\tau_o(s,a_+)
:=
\sum_{\substack{c\ \text{inter-component}\\
\text{ordinary crossing of type } a_+}}
\tau_o(s,c),
\]
and similarly define
$\tau_u(s,a_+)$,
$\tau_o(s,a_-)$,
$\tau_u(s,a_-)$,
$\tau_o(s,b_+)$,
$\tau_u(s,b_+)$,
$\tau_o(s,b_-)$,
$\tau_u(s,b_-)$.

We then set
\[
\tau(s,a_+)
=
\tau_o(s,a_+)+\tau_u(s,a_+),
\]
and similarly for $a_-, b_+, b_-$.

Let $s'$ denote the unique inter-component singular crossing
appearing in the diagram, and let $c$ denote the unique
inter-component ordinary crossing.

\medskip
\noindent
\begin{ex}
For the diagram in Figure~\ref{fig:SHopfO}, the Gauss phrases are

\[
\begin{aligned}
w_s^{\,o} &= s'\,s'\,s\,c \mid c\,s,\\
w_s^{\,u} &= s'\,s\,c\,s' \mid c\,s.
\end{aligned}
\]

Hence
\[
\tau_o(s,c)=0, \qquad \tau_u(s,c)=1.
\]
\end{ex}

\begin{figure}
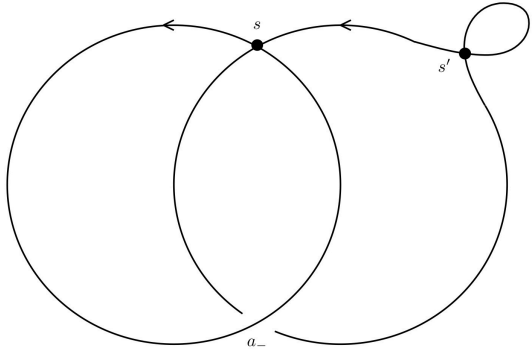

    \centering
    \SHopfO
    \caption{A singular link having an inter-component singular point $s$ and a self-singular point $s'$.  The label $a_-$ denotes the type of the crossing (cf.~Subsection~\ref{sec:labelAB}).  }
\label{fig:SHopfO}
\end{figure}

\medskip
\noindent
\begin{ex}
For the diagram in Figure~\ref{fig:SHopfU}, the Gauss phrases are

\[
\begin{aligned}
w_s^{\,o} &= s'\,s'\,c\,s \mid c\,s,\\
w_s^{\,u} &= s'\,c\,s\,s' \mid c\,s.
\end{aligned}
\]

Hence
\[
\tau_o(s,c)=0, \qquad \tau_u(s,c)=1.
\]
\end{ex}

\begin{figure}
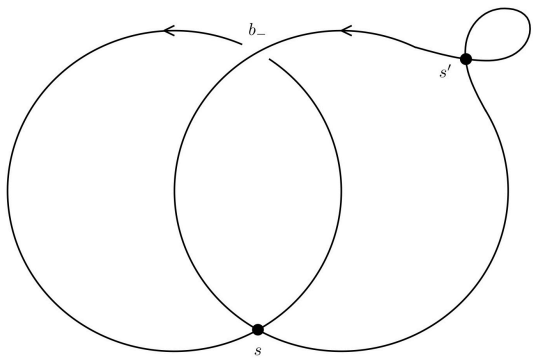

    \centering
    \SHopfU
    \caption{A singular link which is slightly different from the example as in Figure~\ref{fig:SHopfO}.  In particular, these two singular links are moved each other by a move of type $\V$.}
    \label{fig:SHopfU}
\end{figure}
\subsubsection{Definition of the invariant}\label{subsec:def-inv}

For each $s\in S$, define
\[
\tau(s,a_+) := \sum_{\substack{c \text{ inter-component}\\ \text{ordinary crossing of type } a_+}} \tau(s,c),
\quad
\tau(s,a_-) := \sum_{\substack{c \text{ inter-component}\\ \text{ordinary crossing of type } a_-}} \tau(s,c),
\]
and similarly define $\tau(s,b_+)$ and $\tau(s,b_-)$.

For an variant $t$, we associate to each $s\in S$ the ordered pair
\[
\Bigl(
t^{\,\tau(s,a_+)-\tau(s,b_-)},
\;
t^{\,\tau(s,a_-)-\tau(s,b_+)}
\Bigr) \in \mathbb{Z}[t] \times \mathbb{Z}[t].
\]   

Then, we define
\[
f(D)
:=
\left(
\sum_{s\in S} t^{\,\tau(s,a_+)-\tau(s,b_-)},
\;
\sum_{s\in S} t^{\,\tau(s,a_-)-\tau(s,b_+)}
\right) \in \mathbb{Z}[t] \times \mathbb{Z}[t],
\]
where the sum is taken over all self-singular crossings on  each component.   
\begin{ex}
Let $D$ be a diagram as in Figure~\ref{fig:SHopfO}.  
Using the invariant $f$ defined in
Subsection~\ref{subsec:def-inv}, we obtain
\[
f(D)=(1,t).
\]
\end{ex}
\begin{ex}
Let $D$ be a diagram as in Figure~\ref{fig:SHopfU}.  
Using the invariant $f$ defined in
Subsection~\ref{subsec:def-inv}, we obtain
\[
f(D)=(t^{-1},1).
\]
\end{ex}
\subsubsection{Invariance properties}\label{subsec:inv}

\begin{prop}\label{prop:inv-no-IV}
For any self-singular link diagram $D$, 
\begin{itemize}
\item The first entry of $f(D)$ is invariant under $\I$, $\II$, $\III$, and $\IVO$.
\item The second entry of $f(D)$ is invariant under $\I$, $\II$, $\III$, and $\IVU$. 
\end{itemize}
\end{prop}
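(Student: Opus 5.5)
The proof is a move-by-move check: for each local move I would compare the contribution $t^{\tau(s,\cdot)-\tau(s,\cdot)}$ of every self-singular crossing $s$ before and after the move. Since $f(D)$ is a sum over $s\in S$, it suffices to show two things. First, a move never changes the exponents attached to a crossing $s$ lying outside the changing disk. Second, a move creating or destroying a crossing of $S$, which happens only for types~$\IV$ with that crossing carried through the disk, preserves its exponent. The basic observation is that $\tau_o(s,c)$ and $\tau_u(s,c)$ depend only on the cyclic positions of the two occurrences of $s$ relative to the two occurrences of $c$. Because $c$ is inter-component, exactly one of its occurrences lies on the first component. Hence $\tau_o(s,c)$ (resp.\ $\tau_u(s,c)$) is the number of occurrences of $s$ between the base point and that occurrence, i.e.\ $0$, $1$ or $2$ according to which arc of the first component (cut at the two branches of $s$) contains $c$. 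Since both base points sit just before $s$, one on each branch, $\tau(s,c)$ records which of the two loops of the first component, determined by $s$, the crossing $c$ lies on. Concretely, $\tau(s,c)=1$ for $c$ on one loop and $3$ for $c$ on the other, or a similar fixed pattern; I would record this dichotomy as a lemma first.

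\textbf{Type~$\I$.} The move creates or deletes only a self-crossing, which is not recorded, so the words $w_s^{o},w_s^{u}$ are unchanged.

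\textbf{Type~$\II$.} If the two strands belong to the same component, nothing recorded changes. Otherwise two inter-component crossings $c,c'$ appear. They have opposite writhe, and since the over-strand is the same component for both, they form a pair $\{a_+,a_-\}$ or $\{b_+,b_-\}$. Their occurrences on the first component are adjacent, so they have equal $\tau(s,\cdot)$. In the exponent $\tau(s,a_+)-\tau(s,b_-)$, an $a_\pm$ pair contributes its value only through $a_+$, but it also appears in the second entry through $a_-$. So pure cancellation fails, and I need to recheck the type convention. Following Turaev, I expect the convention to be that a type~$\II$ pair is $\{a_+,b_-\}$ or $\{a_-,b_+\}$: the label $a/b$ depends on which component passes over, and the pair of crossings swaps roles relative to the traversal. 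In that case the combinations $a_+-b_-$ and $a_--b_+$ cancel exactly. This is precisely why $f$ is built from these differences, and I will verify it against the pictures.

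\textbf{Type~$\III$.} Recorded letters only permute locally among adjacent positions on the strands. No occurrence of $s$ lies inside the disk, unless $s$ is outside and the strands merely pass through. Hence the loop containing each $c$ is unchanged, and so is $\tau$.

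\textbf{Type~$\IV$.} This is the main obstacle. A strand passes over (for $\IVO$) or under (for $\IVU$) a singular crossing. If the singular crossing is not on the first component's $S$, or the passing strand belongs to the same component, the argument for type~$\III$ applies. The essential case is $s\in S$ with the strand from the second component passing, which creates two inter-component crossings $c_1,c_2$ that move from one side of $s$ to the other. Before the move, the strand crosses the two branches of $s$ on, say, the incoming side; after the move, on the outgoing side. Consequently each $c_i$ moves across the occurrence of $s$ on its branch, shifting $\tau_o$ or $\tau_u$ by $\pm1$. I will compute the types of $c_1,c_2$. When the passing strand is over, both crossings have the second component over, so they are of the same letter $b$. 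Their writhes are opposite because the two branches of $s$ point in transverse directions, so they are $b_+,b_-$. Their $\tau$-shifts must then be checked to cancel in $\tau(s,b_-)$, the only $b$-term entering the first entry. Symmetrically, the under case produces $a_+,a_-$, and $a_-$ enters the second entry alongside $b_+$. So I expect: for $\IVO$ the first entry changes only through $\tau(s,b_-)$, whose contribution is zero by an explicit count; for $\IVU$ the analogous statement holds for the second entry. I would do the count for the single generator $\fourA$ (resp.\ $\fourE$) and invoke Proposition~\ref{4-gene} together with the invariance under $\R$ to cover all of $\RIVO$ (resp.\ $\RIVU$). This reduces the hard step to two explicit pictures.
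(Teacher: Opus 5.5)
Your handling of types~I--III follows the paper, but the type~IV step does not work as proposed, and this is the only place where the distinction between $\IVO$ and $\IVU$ enters.

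In the type~II step you end up with the paper's convention: a type~II pair (common over-strand, opposite writhes) is $\{a_+,b_-\}$ or $\{a_-,b_+\}$. So the letter $a/b$ records the direction in which one component crosses the other, not which component is on top. Your stated reason, that ``$a/b$ depends on which component passes over'', contradicts this, since that reading gives the pair $\{a_+,a_-\}$ you had just rejected. The same fact shows that the two types sharing a given top component are exactly $\{a_+,b_-\}$ or $\{a_-,b_+\}$. In the paper's labelling, the second component is on top at $a_-$ and $b_+$.

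In the type~IV step you revert to the over/under reading and call the two moved crossings $b_+,b_-$. Under the convention you adopted for type~II, the crossings moved when a second-component strand passes over a crossing of $S$ are of types $a_-,b_+$. They therefore do not occur in the first entry at all. That is the paper's whole argument, and no cancellation is needed.

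Your mechanism, by contrast, cannot close. Only one of your two crossings is $b_-$, and its partner $b_+$ is counted in the other entry, so nothing cancels its shift in $\tau(s,a_+)-\tau(s,b_-)$. Moreover, in \fourA{} both crossings pass an occurrence of $s$ and move to the other loop. By your own loop lemma, that shift would then be $\pm 2$, not $0$.

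The loop lemma is also miscomputed. With the definition as written (only $s$ and the inter-component crossings recorded), $\tau_o(s,c)$ is the number of occurrences of $s$ \emph{after} the first-component occurrence of $c$. Moving $c$ to the other loop just swaps $(\tau_o,\tau_u)$ between $(1,0)$ and $(0,1)$, so $\tau(s,c)=1$ for every $c$. In the paper's computation for Figure~\ref{fig:IVOindep}, which also counts the other self-singular letters, $c_1$ and $c_2$ lie on the same loop of $s_1$, yet $\tau(s_1,c_1)=1$ and $\tau(s_1,c_2)=5$.

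Finally, ``the writhes are opposite because the branches are transverse'' is false for braid-type moves in which the passing strand crosses both incoming branches; there the two writhes are equal. It does hold for the non-braid move \fourA. Your reduction to \fourA{} (resp.\ \fourE) via Proposition~\ref{4-gene} and invariance under $\R$ is legitimate.
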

\begin{proof}
A Reidemeister move of type~\I involves only self-ordinary crossings.
Such crossings are not inter-component ordinary crossings and hence do not contribute to the definition of $f(D)$.
Therefore $f(D)$ is unchanged.

A move of type~\II creates or removes a pair of inter-component ordinary crossings of types
$(a_+,b_-)$ or $(a_-,b_+)$.  Their contributions cancel in each summand, and thus $f(D)$ remains unchanged.

A move of type~\III only permutes ordinary crossings locally.
Since it does not change whether a given inter-component ordinary crossing separates the two occurrences of $s$ in $w_s$,
all values $\tau(s,c)$ are preserved, hence so is $f(D)$.

By definition, the first entry of $f(\overline{\operatorname{pr}_+}(D))$ is built from symbols $a_+$ and $b_-$,
while the second entry is built from $a_-$ and $b_+$.

In the Gauss-phrase  construction, a type~\IV move can change $\tau(s,c)$
only for those inter-component ordinary crossings whose types correspond to the strand
passing through the singular crossing.
For moves of type $\IVO$ (Definition~\ref{def:symbolSRM}, $4a$--$4d$),
only crossings of types $a_-$ and $b_+$ can affect the separation pattern with respect to $s$.
Hence the first entry (built from $a_+,b_-$) is preserved.
For moves of type $\IVU$ (Definition~\ref{def:symbolSRM}, $4e$--$4h$),
only crossings of types $a_+$ and $b_-$ can affect the separation pattern,
so the second entry (built from $a_-,b_+$) is preserved.
\end{proof}

\begin{ex}
    Consider applying $\overline{\operatorname{pr}_+}$ to the diagram of Figure~\ref{fig:SHopfO}.
    Then, under $\overline{\operatorname{pr}_+}$, the inter-component singular crossing $s$ is replaced by a positive crossing of type $b_+$.
    
    Hence 
    \[
        \tau(s, a_-) = 1, \qquad \tau(s, b_+) = 1.
    \]
    Therefore, we obtain
    \[
    f(\overline{\operatorname{pr}_+}(D))=(1,1).
    \]
\end{ex}

\begin{ex}
    Consider applying $\overline{\operatorname{pr}_+}$ to the diagram of Figure~\ref{fig:SHopfU}.
    Then, under $\overline{\operatorname{pr}_+}$, the inter-component singular crossing $s$ is replaced by a positive crossing of type $a_+$.
    
    Hence 
    \[
        \tau(s, b_-) = 1, \qquad \tau(s, a_+) = 1.
    \]
    Therefore, we obtain
    \[
    f(\overline{\operatorname{pr}_+}(D))=(1,1).
    \]
\end{ex}
\subsubsection{Detection of type~\IV moves}
\begin{prop}\label{prop:IVdetect}
Let $\IVO$ (resp.\ $\IVU$) denote a type~\IV move in which the over-path (resp.\ under-path)
passes through the singular crossing.
Let $D$ be an ordered, oriented two-component singular link diagram.  We have  
\begin{itemize}
\item The first entry of $f(\overline{\operatorname{pr}_+}(D))$ is invariant under $\I$, $\II$, $\III$, $\IVO$, and $\V$.
\item The second entry of $f(\overline{\operatorname{pr}_+}) (D)$ is invariant under $\I$, $\II$, $\III$, $\IVU$, and $\V$. 
\end{itemize}
\end{prop}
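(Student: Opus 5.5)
The plan is to reduce everything to Proposition~\ref{prop:inv-no-IV} by means of the projection $\overline{\operatorname{pr}_+}$ of Proposition~\ref{prop:SL}. For a diagram $D$, $\overline{\operatorname{pr}_+}(D)$ is a self-singular diagram. So it suffices to show the following: whenever $D$ and $D'$ differ by a single move of type $\I$, $\II$, $\III$, $\IVO$ (resp.\ $\IVU$) or $\V$, the diagrams $\overline{\operatorname{pr}_+}(D)$ and $\overline{\operatorname{pr}_+}(D')$ differ by a sequence of moves of types $\I$, $\II$, $\III$ and $\IVO$ (resp.\ $\IVU$) among self-singular diagrams. Once this is established, the first (resp.\ second) entry of $f$ is unchanged by Proposition~\ref{prop:inv-no-IV}.

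I would go through the move types in order.

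\textbf{Types $\I$--$\III$.} These involve no singular crossings, so $\overline{\operatorname{pr}_+}$ carries the move to the identical ordinary move.

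\textbf{Type $\IV$.} If the singular crossing is self-singular, the move is unchanged by $\overline{\operatorname{pr}_+}$ and keeps its $\IVO$/$\IVU$ label. If the singular crossing is inter-component, it becomes a positive ordinary crossing, and the $\IV$ move becomes an ordinary $\III$ move, as already noted in the proof of Proposition~\ref{prop:SL}. In both cases the class ($\IVO$ versus $\IVU$) is respected.

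\textbf{Type $\V$.} The two crossings adjacent to the singular crossing in a $\V$ move are an ordinary crossing pair on the same two strands. If those strands belong to the same component, the singular crossing is self-singular and nothing changes under $\overline{\operatorname{pr}_+}$. The $\V$ move then has to be handled directly on $f$: I would note that the base point rule of Step~1 and the count of $s$-occurrences between the two letters of each inter-component crossing are unaffected, because the crossings moved are self-crossings, which are not recorded.

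If the strands belong to different components, the singular crossing $s'$ becomes a positive crossing. Before and after the move we then have a positive crossing flanked by a crossing pair of opposite types. Before the move the flanking pair is $(x,y)$; after the move the roles swap, as in the two examples above, where $b_+$ turns into $a_+$.

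I would verify by a direct local computation that the two projected diagrams are related by a type $\II$ move followed by a $\III$ move. Alternatively, one can check directly that the contribution of these three crossings to $\tau(s,a_+)-\tau(s,b_-)$ and to $\tau(s,a_-)-\tau(s,b_+)$ is the same on both sides. The check uses that the three crossings occupy adjacent positions in the Gauss phrase, so each separates the occurrences of $s$ identically; the computation in the two examples, both giving $(1,1)$, is the model case.

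The main obstacle is this inter-component $\V$ case. The crossing types change there, so the cancellation must be checked type by type: the retyping $b_+\leftrightarrow a_+$ of the projected singular crossing has to be compensated by the retyping of the adjacent ordinary crossing ($a_-\leftrightarrow b_-$). I would enumerate the oriented versions $\fiveA$--$\fiveF$, using Lemma~\ref{Generating5-2} to reduce to a couple of representatives modulo type $\II$ moves, and then confirm the cancellation in each entry separately.
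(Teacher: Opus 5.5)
\textbf{Gap: the self-singular type~V case.} This is exactly the case the paper's proof treats, and your argument for it is wrong. You claim the base-point rule of Step~1 is unaffected, but it is not. Read the move as $\sigma_i\tau_i=\tau_i\sigma_i$. The strand occupying the lower-left end of the singular crossing $s$ is one branch before the move and the other branch after it. The positive resolution of $s$ is the same unoriented crossing on both sides, so regarding $s$ as a positive crossing interchanges its over-branch and under-branch. Your remark that only unrecorded self-crossings move also overlooks that $s$ itself is a recorded letter.

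What actually happens is this. The ordinary crossing of the move is an unrecorded self-crossing, so the net effect is that $w_s^{\,o}$ and $w_s^{\,u}$ are exchanged. In the paper's notation, $sYsX$ and $sXsY$ trade places. Hence $\tau_o(s,c)$ and $\tau_u(s,c)$ are swapped and in general change individually. Invariance holds only because $\tau(s,c)=\tau_o(s,c)+\tau_u(s,c)$ in \eqref{eq:sigmaBase} is symmetric in the two base points. That symmetrization is the missing idea, and it is the whole content of the paper's argument: the totals $3x+3y+2$ agree before and after.

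\textbf{The inter-component case is the easy one, and your description of it is off.} A type~V move has exactly one ordinary crossing $c$ next to the singular crossing $s'$, not a flanking pair, so there are no ``three crossings'' to balance. After $\overline{\operatorname{pr}_+}$ turns $s'$ into a positive crossing, the local tangle is the two-crossing twist $\sigma_i^{\epsilon}\sigma_i^{\eta}$ before the move and $\sigma_i^{\eta}\sigma_i^{\epsilon}$ after it. Here $\sigma_i^{\eta}$ is the positive resolution, which is the same unoriented crossing in both positions of the twist.
\begin{itemize}
\item If $c$ is positive, then $\epsilon=\eta$ and the two projected diagrams coincide.
\item If $c$ is negative, both are bigons removable by a type~II move. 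This is the situation in the paper's example, where $b_+,a_-$ become $a_+,b_-$.
\end{itemize}
So type~II invariance from Proposition~\ref{prop:inv-no-IV} suffices. You need no II-then-III sequence and no enumeration of 5a--5f via Lemma~\ref{Generating5-2}.

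Your reduction of types I--IV through $\overline{\operatorname{pr}_+}$ to Proposition~\ref{prop:inv-no-IV} is fine. It matches what the paper uses implicitly.
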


\begin{proof}
We will show the invariance under type~$\V$ moves.  
Note that $\overline{\operatorname{pr}_+}(D)$ contains only self-singular crossings.

Let $X$ and $Y$ denote subwords in the Gauss phrase.
Let $x$ (resp.\ $y$) be the number of occurrences of letters
corresponding to self-singular crossings in $X$ (resp.\ $Y$)
that contribute to the separation numbers.

Before applying a move of type~$\V$, the Gauss phrases take the form
\[
\text{(O)}\quad s\, Y\, s\, X \mid (\text{other component}), 
\qquad
\text{(U)}\quad s\, X\, s\, Y \mid (\text{other component}).
\]
The total contribution to $\tau$ is
\[
(2x + y + 1) + (x + 2y + 1) = 3x + 3y + 2.
\]

After applying a move of type~$\V$, the Gauss phrases become
\[
\text{(O)}\quad s\, X\, s\, Y \mid (\text{other component}), 
\qquad
\text{(U)}\quad s\, Y\, s\, X \mid (\text{other component}).
\]
The total contribution is again
\[
(2x + y + 1) + (x + 2y + 1) = 3x + 3y + 2.
\]

Thus the total contribution from the over-path and under-path
is invariant under moves of type~$\V$.

The above computation also covers the cases where $X$ or $Y$
contains no inter-component crossings, which can be easily  verified
by direct substitution.
\end{proof}
Proposition~\ref{prop:IVdetect} immediately implies Corollary~\ref{cor:pr+1234}.    
\begin{cor}\label{cor:pr+1234}
For any singular link diagram $D$, 
the function $f(\overline{\operatorname{pr}_+}(D))$ is invariant under Reidemeister moves of types~\I, \II, \III, and~\V. 
\end{cor}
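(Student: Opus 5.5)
The plan is to read Corollary~\ref{cor:pr+1234} off Proposition~\ref{prop:IVdetect}, treating each entry of $f(\overline{\operatorname{pr}_+}(D))$ separately and then combining them. Let $D_0$ and $D_1$ be singular link diagrams related by a single move $m$ of type~$\I$, $\II$, $\III$, or~$\V$. It suffices to show $f(\overline{\operatorname{pr}_+}(D_0)) = f(\overline{\operatorname{pr}_+}(D_1))$, since a finite sequence of such moves is then handled by induction on its length. By Proposition~\ref{prop:SL}, $\overline{\operatorname{pr}_+}(D_0)$ and $\overline{\operatorname{pr}_+}(D_1)$ are self-singular diagrams, so $f$ is defined on both.

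\textbf{First entry.} Proposition~\ref{prop:IVdetect} states that the first entry of $f(\overline{\operatorname{pr}_+}(D))$ is invariant under $\I$, $\II$, $\III$, $\IVO$ and $\V$. Since $m$ is one of $\I$, $\II$, $\III$, $\V$, the first entries agree.

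\textbf{Second entry.} Proposition~\ref{prop:IVdetect} also states that the second entry is invariant under $\I$, $\II$, $\III$, $\IVU$ and $\V$. Again $m$ is in this list, so the second entries agree. Putting the two entries together gives equality in $\mathbb{Z}[t]\times\mathbb{Z}[t]$, which is the claim.

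\textbf{Main obstacle.} The only point needing care is the bookkeeping behind the proposition: whether moves on $D$ correspond, after applying $\overline{\operatorname{pr}_+}$, to moves of the same type, or to moves already covered by the two lists. For types~$\I$--$\III$ the proof of Proposition~\ref{prop:SL} says $\overline{\operatorname{pr}_+}$ leaves the move unchanged. For type~$\V$, the move may involve an inter-component singular crossing, which $\overline{\operatorname{pr}_+}$ turns into a positive crossing, so on the image it could look like an ordinary move rather than a $\V$ move. Either way it falls within the lists in Proposition~\ref{prop:IVdetect}, whose proof treats exactly this type~$\V$ computation. So I would state explicitly that the invariance there is meant for $D$ itself, before projection, and otherwise cite it verbatim.

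Type~$\IV$ moves are deliberately excluded from the corollary. A move in $\IVO$ may change the second entry, and a move in $\IVU$ may change the first, so only types~$\I$, $\II$, $\III$ and~$\V$ preserve both entries simultaneously.
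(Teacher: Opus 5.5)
Your proposal is correct and is exactly the paper's argument: the paper says only that Proposition~\ref{prop:IVdetect} immediately implies the corollary. As you note, types~$\Ino$, $\IIno$, $\IIIno$, and~$\Vno$ appear in both invariance lists (for the first and the second entry of $f(\overline{\operatorname{pr}_+}(D))$), and your remark that a type~$\Vno$ move through an inter-component singular crossing becomes ordinary moves after projection only spells out bookkeeping the paper leaves implicit.
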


\begin{proof}[Proof of Proposition~\ref{4-non-gene}]
We prove the statement by computing the invariant for the diagram
$D$ shown in Figure~\ref{fig:IVOindep}.

\begin{figure}[ht]
    \ExampleO
    \caption{}
    \label{fig:IVOindep}
\end{figure}
The two components appearing in $D$ can be separated by a move of type
$\IVU$ together with ordinary Reidemeister moves of
types~$\I$--$\III$.
After this separation, the resulting diagram has no real crossings;
we denote this diagram by $O$.

Note that the argument does not depend on the particular choice of the
move $\IVU$.
Indeed, any other move of type $\IVU'$ is generated from a given move of type $\IVU$ together with moves of types~$\I$--$\III$ (Proposition~\ref{4-gene}).

On the other hand, we compute
\[
f(\overline{\operatorname{pr}_+}(D))
= (t \cdot t^{-5} + t^5 \cdot t^{-1},\,1)
= (t^{-4} + t^{4},\,1)
\neq (1,1)
= f(O).
\]

The first component of $f(\overline{\operatorname{pr}_+}(D))$
is invariant under Reidemeister moves of types~$\I$--$\III$,
$\IVO$, and $\V$.
Suppose that $\R \cup \mathcal{RIV}_O$
generated some move in $\mathcal{RIV}_U$.
Then, by Proposition~\ref{4-gene}, it would generate
any chosen move $\IVU'$.
In particular, $D$ could be transformed to $O$ using
moves in $\R\cup \mathcal{RIV}_O$.
This would imply
\[
f(\overline{\operatorname{pr}_+}(D)) = f(O),
\]
which is a contradiction.  
Therefore, $\R\cup \mathcal{RIV}_O$
does not generate any move in $\mathcal{RIV}_U$.

By exchanging the roles of $\IVO$ and $\IVU$
and replacing Figure~\ref{fig:IVOindep}
with Figure~\ref{fig:IVUindep},
\begin{figure}
    \centering
\ExampleU
    \caption{}
    \label{fig:IVUindep}
\end{figure}
the same argument shows that
$\R\cup \mathcal{RIV}_U$
does not generate any move in $\mathcal{RIV}_O$.  This completes the proof.

For completeness we record the explicit computation below.
\paragraph{\bf Computation for Figure~\ref{fig:IVOindep}.}

\begin{align*}
\text{Basepoint on the over-path of } & s_1 &
& s_1\, s_1\, c_2\, s_2\, s_2\, c_1 \mid c_1\, c_2 ,\\
\text{Basepoint on the under-path of } & s_1  &
& s_1\, c_2\, s_2\, s_2\, c_1\, s_1 \mid c_1\, c_2 ,\\
\text{Basepoint on the over-path of } & s_2  &
& s_2\, s_2\, c_1\, s_1\, s_1\, c_2 \mid c_1\, c_2 ,\\
\text{Basepoint on the under-path of } & s_2  &
& s_2\, c_1\, s_1\, s_1\, c_2\, s_2 \mid c_1\, c_2 .
\end{align*}

From these words we obtain
\begin{align*}
\tau_o(s_1,c_1) &= 0, &
\tau_o(s_1,c_2) &= 2, \\
\tau_u(s_1,c_1) &= 1, &
\tau_u(s_1,c_2) &= 3, \\
\tau_o(s_2,c_1) &= 2, &
\tau_o(s_2,c_2) &= 0, \\
\tau_u(s_2,c_1) &= 3, &
\tau_u(s_2,c_2) &= 1 .
\end{align*}

Hence
\begin{align*}
\tau(s_1,c_1) &= 1, &
\tau(s_2,c_1) &= 5,\\
\tau(s_1,c_2) &= 5, &
\tau(s_2,c_2) &= 1 .
\end{align*}
\paragraph{\bf Computation for Figure~\ref{fig:IVUindep}.}

In this case the two real crossings are changed by crossing changes.
Therefore the Gauss words above remain the same.
The only difference is that the $\tau$-terms that contributed to $a_+$
now contribute to $a_-$,
and those contributing to $b_-$ now contribute to $b_+$.

Consequently,
\[
f(\overline{\operatorname{pr}_+}(D))
=
(1,\,
t \cdot t^{-5} + t^{5} \cdot t^{-1} )
=
(1,\,
t^{-4} + t^{4})
\neq
(1,1)
=
f(O).
\]
\end{proof} 

\subsection{Detection of type~\V\ moves}\label{sec:Detect-5}

\begin{proof}[Proof of Proposition~\ref{5-non-gene}]
Recall the definition of the labels $a$ and $b$ for inter-component crossings
given in Subsection~\ref{sec:labelAB}.

For an ordered $2$-component link diagram $D$, define
\[
\delta(D)
:=
\#\{\text{inter-component crossings of type }a\}
-
\#\{\text{inter-component crossings of type }b\}.
\]

We claim that $\delta(D)$ is invariant under Reidemeister moves of
types~$\I$--$\IV$.

Indeed, a move of type~$\I$ does not involve any inter-component crossing,
so $\delta(D)$ is unchanged.
Every move of type~$\II$
creates or removes one inter-component crossing of type $a$
and one inter-component crossing of type $b$, and hence their contributions cancel.
Moves of type~$\III$ and $\IV$ do not change the numbers of inter-component crossings
of types $a$ and $b$, so $\delta(D)$ is unchanged.

On the other hand, a move of type~$\V$ is the only Reidemeister move
that changes an inter-component crossing from type $a$ to type $b$,
or from type $b$ to type $a$, without changing the total number of crossings.
Therefore, a move of type~$\V$ changes the value of $\delta(D)$ by $\pm 2$.

Hence any sequence of Reidemeister moves of types~$\I$--$\IV$
preserves $\delta(D)$, whereas a move of type~$\V$ changes it.
Therefore, no move of type~$\V$ can be generated by moves of
types~$\I$--$\IV$.
\end{proof}

\section{Unoriented Case}\label{sec:unori}
We now turn to the unoriented setting.  The following theorem gives the corresponding classification for unoriented singular Reidemeister moves.
\begin{proof}[Proof of Theorem~\ref{thm:unoriented}]
It is known that any generating set of unoriented Reidemeister moves
of types~$\I^{\!\mathrm{un}}$, $\II^{\!\mathrm{un}}$, and $\III^{\!\mathrm{un}}$ must contain at least one move of each type
(cf.~\cite{Ostlund2001PhD, Hagge2006, Manturov2004}).

\medskip
\noindent
\textbf{(Independence of type~$\V^{\!\mathrm{un}}$).}
The move of type~$\V^{\!\mathrm{un}}$ is independent of types~$\I^{\!\mathrm{un}}$--$\IV^{\!\mathrm{un}}$.
Indeed, consider an ordered $2$-component singular link and assign an arbitrary orientation.
Then a move of type~$\V^{\!\mathrm{un}}$ is the only move that changes the crossing type
of an inter-component crossing (see Subsection~\ref{sec:labelAB}),
while moves of types~$\I^{\!\mathrm{un}}$--$\IV^{\!\mathrm{un}}$ preserve these labels.
Hence type~$\V^{\!\mathrm{un}}$ cannot be generated from types~$\I^{\!\mathrm{un}}$--$\IV^{\!\mathrm{un}}$.

\medskip
\noindent
\textbf{(Necessity of both types of  $\IVO^{\mathrm{un}}$ and $\IVU^{\mathrm{un}}$).}
Suppose that no move of type $\IVU^{\mathrm{un}}$ is included.
Given any unoriented $2$-component singular link,
choose an arbitrary orientation.
Then, by the oriented case, the invariant
$f(\overline{\operatorname{pr}_+}(D))$
shows that a move of type $\IVU^{\mathrm{un}}$ is necessary.
Since the choice of orientation is arbitrary,
this argument applies to all possible orientations,
and hence $\IVU^{\mathrm{un}}$ is indispensable in the unoriented setting.
By symmetry, the same holds for $\IVO^{\mathrm{un}}$.

\medskip
\noindent
\textbf{(Sufficiency of one move of each type $\IV^{\!\mathrm{un}}$ and $\V^{\!\mathrm{un}}$).}
Assume that moves of types~$\I^{\!\mathrm{un}}$--$\III^{\!\mathrm{un}}$ are available.
Then the generation results for type~$\IV^{\!\mathrm{un}}$ moves
(Section~\ref{subsec:gene-4})
remain valid in the unoriented setting,
and hence a single move from $\RIVO^{\mathrm{un}}$ (resp.\ $\RIVU^{\mathrm{un}}$)
generates all moves in $\RIVO^{\mathrm{un}}$ (resp.\ $\RIVU^{\mathrm{un}}$).
Similarly, once moves of types~$\I^{\!\mathrm{un}}$--$\IV^{\!\mathrm{un}}$ are available,
the generation results for type~$\V^{\!\mathrm{un}}$ moves
(Section~\ref{subsec:gene-5})
show that a single move of type~$\RV^{\!\mathrm{un}}$ generates all moves in $\RV^{\!\mathrm{un}}$.

\medskip

Combining the above, any minimal generating set must contain
exactly one move from each of
$\RI^{\!\mathrm{un}}$, $\RII^{\!\mathrm{un}}$, $\RIII^{\!\mathrm{un}}$, $\RIVO^{\mathrm{un}}$, $\RIVU^{\mathrm{un}}$, and $\RV^{\!\mathrm{un}}$.
Since there are two choices for each of the three types
$\I^{\!\mathrm{un}}$, $\III^{\!\mathrm{un}}$, and $\V^{\!\mathrm{un}}$,
the total number of such sets is $2\times 1 \times 2 \times 1 \times 1 \times 2=8$.
\end{proof}

\bibliographystyle{plain}
\bibliography{ListSMGS}
\end{document}